\numberwithin{equation}{section}
\newtheorem{definition}{Definition}[section]
\newtheorem{theorem}{Theorem}[section]
\newtheorem{lemma}{Lemma}[section]
\title{\bf A Stackelberg Game of Backward Stochastic Differential Equations with Partial Information
\thanks{This work is supported by National Key R\&D Program of China (Grant No. 2018YFB1305400) and National Natural Science Foundations of China (Grant No. 11971266, 11831010, 11571205).}}
\author{\normalsize Yueyang Zheng\thanks{\it School of Mathematics, Shandong University, Jinan 250100, P.R.China, E-mail: zhengyueyang0106@163.com} , Jingtao Shi\thanks{\it Corresponding author. School of Mathematics, Shandong University, Jinan 250100, P.R.China, E-mail: shijingtao@sdu.edu.cn}}
\newtheorem{Remark}{Remark}[section]
\begin{document}
\maketitle

\noindent{\bf Abstract:} This paper is concerned with a Stackelberg game of backward stochastic differential equations (BSDEs) with partial information, where the information of the follower is a sub-$\sigma$-algebra of that of the leader. Necessary and sufficient conditions of the optimality for the follower and the leader are first given for the general problem, by the partial information stochastic maximum principles of BSDEs and forward-backward stochastic differential equations (FBSDEs), respectively. Then a linear-quadratic (LQ) Stackelberg game of BSDEs with partial information is investigated. The state estimate feedback representation for the optimal control of the follower is first given via two Riccati equations. Then the leader's problem is formulated as an optimal control problem of FBSDE. Four high-dimensional Riccati equations are introduced to represent the state estimate feedback for the optimal control of the leader. Theoretic results are applied to a pension fund management problem of two players in the financial market.

\vspace{1mm}

\noindent{\bf Keywords:} Stackelberg game, backward stochastic differential equation, partial information, maximum principle, linear-quadratic control
\vspace{1mm}

\noindent{\bf Mathematics Subject Classification:}\quad 93E20, 49K45, 49N10, 49N70, 60H10

\section{Introduction}

The Stackelberg game is also known as the leader-follower game, which can be traced back to the early work by Stackelberg \cite{S52}, when he defined a concept of a hierarchical solution for markets where some firms have power of domination over others. The solutions, in the context of the differential game, is called the corresponding Stackelberg equilibrium points in which there are two players with asymmetric roles, one leader and one follower. For obtaining the Stackelberg solutions, it is usual to divide the game problem into two parts. In the first part, which is also known as the follower's problem, firstly the leader announces his strategy, then the follower will make an instantaneous response, and choose an optimal strategy corresponding to the given leader's strategy to minimize (or maximize) his cost functional. In the second part, knowing the follower would take such an optimal strategy, the leader will choose an optimal strategy to minimize (or maximize) his cost functional. Overall, the decisions must be made by two player and one of them is subordinated to the other because of the asymmetric roles, therefore one player must making a decision after the other player's decision is made.

The Stackelberg game has wide practical financial and economical backgrounds, and has attracted more and more research attentions with applications. Simann and Cruz \cite{SC73} made an early study on the properties of the Stackelberg solution in static and dynamic non-zero sum two-player games. Bagchi and Ba\c{s}ar \cite{BB81} investigated an LQ stochastic Stackelberg differential game, where the state and control variables do not enter the diffusion coefficient in the state equation. Yong \cite{Yong02} studied an LQ leader-follower differential game in a more general framework where the coefficients of the system and the cost functionals are random, the diffusion of the state equation contains the control variables, and the weight matrices for the controls in cost functionals are not necessarily positive definite. \O ksendal et al. \cite{OSU13} proved a maximum principle for a Stackelberg differential game with jump-diffusion, and applied the result to a continuous time newsvendor problem. Bensoussan et al. \cite{BCS15} introduced several solution concepts in terms of the players' information sets, and studied LQ Stackelberg games under both adapted open-loop and closed-loop memoryless information structures, whereas the control variables do not enter the diffusion coefficient of the state equation. Meanwhile, the Stackelberg games have been investigated in the mean-field, time-delay, partial information and other fields. Recently, Xu and Zhang \cite{XZ16} studied the discrete-time leader-follower game with time delay and the new co-states which capture the future information of the control and the new state which contains the past effects are introduced to overcome the noncausality of strategy design caused by the delay, then the same technique is used to deal with the continuous-time system. Then Xu et al. \cite{XSZ18} studied the leader-follower differential game with time delay appearing in the leader's control, the open-loop solution is given in the form of the conditional expectation with respect to several symmetric Riccati equations by mainly establishing the nonhomogeneous relationship between the forward and the backward variables. Moon and Ba\c{s}ar \cite{MB18} investigated an LQ mean field Stackelberg differential game with the adapted open-loop information structure of the leader where there are only one leader but arbitrarily large number of followers. Lin et al. \cite{LJZ19} studied the open-loop LQ Stackelberg game of the mean-field stochastic systems in finite horizon, and a sufficient condition for the existence and uniqueness of the stackelberg strategy was given in terms of the solvability of some Riccati equations and a convexity condition by introducing new state and costate variables. Shi et al. \cite{SWX16} introduced a new explanation for the asymmetric information feature that the information available to the follower is based on the some sub-$\sigma$-algebra of that available to the leader for the Stackelberg differential game. Then an LQ stochastic Stackelberg differential game with noisy observation was solved via some measure transformation, filtering technique, linear FBSDE and mean-field FBSDE decoupling technique, where not all the diffusion coefficients contain the control variables. Shi et al. \cite{SWX17} studied an LQ stochastic Stackelberg differential game with asymmetric information, where the control variables enter both diffusion coefficients of the state equation. Shi et al. \cite{SWX17o} investigated a kind of stochastic LQ Stackelberg differential game with overlapping information which means that the follower's and the leader's information have some joint part, while they have no inclusion relation. Li and Yu \cite{LY18} proved the uniqueness and solvability of a kind of coupled forward-backward stochastic differential equations (FBSDEs) with a multilevel self-similar domination-monotonicity structure, then this kind of FBSDEs is used to characterize the unique equilibrium of an LQ generalized Stackelberg game with hierarchy in a closed form.

Different from forward stochastic differential equations (SDEs) where a prescribed initial condition $x(0)=x_0$ is given, the BSDEs is short for a kind of backward SDEs with a given terminal condition $y(T)=\xi$. And BSDE admits a pair of adapted solution $(y(\cdot),z(\cdot))$ under some conditions, where the additional term $z(\cdot)$ may be interpreted as a risk-adjustment factor and is required for the equation to have adapted solution. The linear version of this type of equation was first introduced by Bismut \cite{Bis78} as the adjoint equation in the stochastic maximum principle. General nonlinear BSDEs, introduced independently by Pardoux and Peng \cite{PP90} and Duffie and Epstein \cite{DE92}, have received considerable research attention in recent years due to their nice structure and wide applicability in a number of different areas, especially in mathematical finance, optimal control and differential games. El Karoui et al. \cite{EPQ97} discussed different properties of BSDEs and their application to finance. Two recent monographs about BSDEs can be seen in Pardoux and R\u{a}\c{s}canu \cite{PR14} and Zhang \cite{Zhang17}.

The optimal control problem of BSDEs was first studied by Peng \cite{P92,P93} and El Karoui et al. \cite{EPQ97}, when solving the recursive utility maximization problems. Dokuchaev and Zhou \cite{DZ99} studied a stochastic control problem where the system dynamics is a controlled nonlinear BSDE. Kohlmann and Zhou \cite{KZ00} explored the relationship between BSDEs and stochastic controls by interpreting BSDEs as some stochastic optimal control problems. Chen and Zhou \cite{CZ00} investigated an optimization model of stochastic LQ regulators with indefinite control cost weighting matrices, involving a backward LQ problem. Lim and Zhou \cite{LZ01} studied an optimal control of linear BSDEs with a quadratic cost criteria, and the solution is obtained by using the completion-of-squares technique. Huang et al. \cite{HWX09} studied a partial information control problem of backward stochastic systems, and obtained a new stochastic maximum principle. Shi \cite{Shi11} investigated an optimal control problem for systems described by BSDEs with time delayed generators, and proved a sufficient maximum principle. The mean-field BSDE was firstly introduced by Buckdahn et al. \cite{BDLP09}. Ma and Liu \cite{ML17} investigated an optimal control of an infinite horizon system governed by mean-field BSDE with delay and partial information, and establish the existence and uniqueness results for a mean-field BSDE with average delay. Li et al. \cite{LSX19} studied the LQ optimal control problem for mean-filed BSDEs.

When it comes to the differential game problem of BSDEs, Hamadene and Lepeltier \cite{HL95} discussed a stochastic zero-sum differential games of the results on BSDEs, and obtained the existence of a saddle point in the bounded case under the Isaacs' condition. Yu and Ji \cite{YJ08} studied an existence and uniqueness result for an initial coupled FBSDE under some monotone conditions, which was applied to backward LQ non-zero sum stochastic differential game problem. Wang and Yu \cite{WY10} established a necessary condition and a sufficient condition in the form of maximum principle for open-loop equilibrium point of the game systems described by the BSDEs. Wang and Yu \cite{WY12} continued to establish a necessary condition in the form of maximum principle for open-loop Nash equilibrium point of this type of partial information game, and then gave a verification theorem which is a sufficient condition for Nash equilibrium point. Shi and Wang \cite{SW16} investigated a non-zero sum differential game, where the state dynamics follows a BSDE with time-delayed generator, and an Arrow's sufficient condition for open-loop equilibrium point is proved. Huang et al. \cite{HWW16} studied a backward mean-field linear-quadratic-Gaussian games of weakly coupled stochastic large-population system, and two classes of foregoing games are discussed and their decentralized strategies are derived through the consistency condition. Huang and Wang \cite{HW17} discussed a kind of non-zero sum differential game of mean-field BSDE. Wang et al. \cite{WXX18} studied a kind of LQ non-zero sum differential game driven by BSDE with asymmetric information. Aurell \cite{A18} studied a mean-field type games between two players with backward stochastic dynamics, and made up a class of non-zero sum, non-cooperating, differential games where the players' state dynamics solve a BSDE that depends on the marginal distributions of player states. Du et al. \cite{DHW19} studied the mean-field game of $N$ weakly-coupled linear BSDE system. Du and Wu \cite{DW19} investigated a new kind of Stackelberg differential game of mean-field BSDEs. Huang et al. \cite{HWZ19} focused on a kind of non-zero sum differential game driven by mean-field BSDE with asymmetric information.

Inspired by the above literatures, in this paper we study the Stackelberg game of BSDEs with partial information, where the coefficients of the backward game system and cost functionals are deterministic, and the control domain is convex. In our framework, we set that the information filtration available to the leader is the complete information filtration naturally generated by the random noise source, and the information filtration available to the follower is based on the sub-$\sigma$-algebra of that available to the leader. The novelty of the formulation and the contribution in this paper is the following. (1) A new kind of general Stackelberg game of BSDEs with partial information is introduced and studied by the maximum principle approach, where a terminal condition $\xi$ is given in advance. For the follower's problem, the partial information maximum principle and verification theorem are given, which are direct from Theorem 2.1 and Theorem 2.3 of Wang and Yu \cite{WY12}. For the leader's problem, the partial information maximum principle could be derived via the similar technique in Zuo and Min \cite{ZM13} which, however, did not give the corresponding sufficient condition. Therefore, in our paper, the partial information verification theorem is derived, by the Clarke generalized gradient. (2) For the LQ case, it consists of a stochastic optimal control problem of BSDE with partial information for the follower, and followed by a stochastic optimal control problem of coupled conditional mean-field forward-backward stochastic differential equations (FBSDEs) with complete information for the leader, which is different from that in the (forward) Stackelberg differential game studied in Shi et al. \cite{SWX17}. (3) For giving the state estimate feedback representations for the optimal control of the follower, two Riccati equations, a linear backward stochastic differential filtering equation (BSDFE), and a linear stochastic differential filtering equation (SDFE) are introduced. See Theorem 4.1. Then, four high-dimensional Riccati equations, a linear BSDFE, and a linear SDFE are introduced to represent the optimal control of the leader as the state estimate feedback form. See Theorem 4.2. (4) A pension fund problem of two players with asymmetric information in the financial market is studied, the Stackelberg equilibrium point is represented and the optimal initial wealth reserve is obtained explicitly.

The rest of this paper is organized as follows. In Section 2, the general Stackelberg game of BSDEs with partial information is formulated. Then this general problem is studied in Section 3. The follower's problem of the BSDE with partial information is considered first in Subsection 3.1, while the leader's problem of the conditional mean-field FBSDE is studied in Subsection 3.2. By the partial information maximum principle approach, necessary and sufficient conditions for the optimal controls of the follower and the leader's are given, respectively. Then the LQ Stackelberg game problem with partial information is investigated in Section 4. Specially, Subsection 4.1 is devoted to the solution of an LQ stochastic optimal control problem of BSDE with partial information of the follower, via two Riccati equations, a BSDFE and a SDFE, the optimal control of the follower is given in the state feedback form. Subsection 4.2 is devoted to the solution of an LQ stochastic optimal control problem of coupled conditional mean-field FBSDE with complete information of the leader, the optimal control of the leader is represented as the state feedback form by the solutions to four new high-dimensional Riccati equations, a BSDFE and a SDFE. In Section 5, the theoretic results in the previous sections are applied to a pension fund management problem of two players with asymmetric information in the financial market. Finally, Section 6 gives some concluding remarks.

\section{Problem Formulation}

In this paper, we use $\mathbb{R}^n$ to denote the Euclidean space of $n$-dimensional vectors, $\mathbb{R}^{n\times d}$ to denote the space of $n\times d$ matrices, and $\mathcal{S}^n$ to denote the space of $n\times n$ symmetric matrices. $\langle\cdot,\cdot\rangle$ and $|\cdot|$ are used to denote the scalar product and norm in the Euclidean space, respectively. A $\top$ appearing in the superscript of a matrix, denotes its transpose. $f_x,f_{xx}$ denote the first- and second-order partial derivatives with respect to $x$ for a differentiable function $f$, respectively.

Let $T>0$ be fixed. Consider a complete probability space $(\Omega,\mathcal{F},\mathbb{P})$ and two standard $m(\widetilde{m})$-dimensional Brownian motions $W(t)$ and $\widetilde{W}(t)$ with $W(0)=\widetilde{W}(0)=0$, which generates the filtration $\mathcal{F}_{t}=\sigma\{W(r),\widetilde{W}(r): 0\leq r\leq t\}$ augmented by all the $\mathbb{P}$-null sets in $\mathcal{F}$. $L_{\mathcal{F}_T}^2(\Omega,\mathbb{R}^n)$ denotes the set of $\mathbb{R}^n$-valued, $\mathcal{F}_T$-measurable, square-integrable random vectors, $L^2_\mathcal{F}(0,T;\mathbb{R}^n)$ denotes the set of $\mathbb{R}^n$-valued, $\mathcal{F}_t$-adapted, square integrable processes on $[0,T]$, $L^2_\mathcal{F}(0,T;\mathbb{R}^{n\times d})$ denotes the set of $n\times d$-matrix-valued, $\mathcal{F}_t$-adapted, square integrable processes on $[0,T]$, and $L^\infty(0,T;\mathbb{R}^{n\times d})$ denotes the set of $n\times d$-matrix-valued, bounded functions on $[0,T]$.

Let us consider the following controlled BSDE:
\begin{equation}\label{bsde1}
\left\{
\begin{aligned}
-dy^{v_1,v_2}(t)&=f(t,y^{v_1,v_2}(t),z^{v_1,v_2}(t),\tilde{z}^{v_1,v_2}(t),v_1(t),v_2(t))dt\\
                &\quad-z(t)^{v_1,v_2}dW(t)-\tilde{z}^{v_1,v_2}(t)d\widetilde{W}(t),\ t\in[0,T],\\
  y^{v_1,v_2}(T)&=\xi,
\end{aligned}
\right.
\end{equation}
where $f:[0,T]\times\mathbb{R}^n\times\mathbb{R}^{n\times m}\times\mathbb{R}^{n\times\widetilde{m}}\times\mathbb{R}^{k_1}\times\mathbb{R}^{k_2}\rightarrow\mathbb{R}^n$ is a given continuous function in $(t,y,z,\tilde{z},v_1,v_2)$ and $\xi\in L_{\mathcal{F}_T}^2(\Omega,\mathbb{R}^n)$ is given. $v_1(\cdot)\in U_1$ is the control process of the follower, and $v_2(\cdot)\in U_2$ is the control process of the leader, where $U_i$ is a nonempty convex subset of $\mathbb{R}^{k_i},\ i=1,2$. In the backward game system \eqref{bsde1}, the two players work together to achieve a common goal $\xi$ at the terminal time $T$.

Let $\mathcal{G}_t^i\subseteq\mathcal{F}_t$ be a given sub-filtration which represents the information available to the follower and the leader at time $t\in[0,T],\ i=1,2$, respectively, and $\mathcal{G}_t^1\subseteq\mathcal{G}_t^2\subseteq\mathcal{F}_t$. We define the admissible control sets by
\begin{equation}
\mathcal{U}_i[0,T]=\big\{v_i(\cdot)\in L^2_{\mathcal{G}^i}(0,T;\mathbb{R}^{k_i})\big|v_i(t)\in U_i,t\in[0,T],\ a.e,\ a.s.\big\},\ i=1,2,
\end{equation}
respectively.

We define the cost functionals of the follower and the leader as
\begin{equation}\label{cost functional}
J_i(v_1(\cdot),v_2(\cdot);\xi)=\mathbb{E}\bigg[\int_0^TL_i(t,y^{v_1,v_2}(t),z^{v_1,v_2}(t),\tilde{z}^{v_1,v_2}(t),v_1(t),v_2(t))dt+h_i(y^{v_1,v_2}(0))\bigg],
\end{equation}
for $i=1,2$. Here $L_i:[0,T]\times\mathbb{R}^n\times\mathbb{R}^{n\times m}\times\mathbb{R}^{n\times\widetilde{m}}\times\mathbb{R}^{k_1}\times\mathbb{R}^{k_2}\rightarrow\mathbb{R}$ are given continuous functions in $(t,y,z,\tilde{z},v_1,v_2)$ and $h_i:\mathbb{R}^n\rightarrow\mathbb{R}$ are given continuous functions, for $i=1,2$. We remark that the cost functional \eqref{cost functional} describe that the players have their own benefits except for the terminal common goal $\xi$.

Now, we give some assumptions that will be in force through this paper.

{\bf(A1)} {\it The function $f$ is continuously differentiable in $(y,z,\tilde{z},v_1,v_2)$. Moreover, the partial derivatives $f_y,f_z,f_{\tilde{z}},f_{v_1}$ and $f_{v_2}$ with respect to $y,z,\tilde{z},v_1$ and $v_2$ are uniformly bounded.}

From Pardoux and Peng \cite{PP90}, it is easy to see that if both $v_1(\cdot)\in\mathcal{U}_1[0,T]$, $v_2(\cdot)\in\mathcal{U}_2[0,T]$, and {\bf(A1)} holds, then BSDE \eqref{bsde1} admits a unique solution triple $(y^{v_1,v_2}(\cdot),z^{v_1,v_2}(\cdot),\tilde{z}^{v_1,v_2}(\cdot))\in L_\mathcal{F}^2(0,T;\mathbb{R}^n)\times L_\mathcal{F}^2(0,T;\mathbb{R}^{n\times m})\times L_\mathcal{F}^2(0,T;\mathbb{R}^{n\times\widetilde{m}})$, which we called the state trajectory.

{\bf(A2)} {\it $L_i$ is continuously differentiable with respect to $(y,z,\tilde{z},v_1,v_2)$ and $h_i$ is continuously differentiable in $y,\ i=1,2$. Moreover, there exists a constant $C$ such that $L_{iy},L_{iz},L_{i\tilde{z}},L_{iv_1},L_{iv_2}$ are bounded by $C(1+|y|+|z|+|\tilde{z}|+|v_1|+|v_2|)$, and $h_{iy}$ is bounded by $C(1+|y|),\ i=1,2$.}

\vspace{1mm}

The problem studied in this paper is proposed in the following definition.
\begin{definition}
The pair $(\bar{v}_1(\cdot),\bar{v}_2(\cdot))\in\mathcal{U}_1[0,T]\times\mathcal{U}_2[0,T]$ is called an optimal solution to the Stackelberg game of BSDEs with partial information, if it satisfies the following condition:\\
(i)\ For given $\xi\in L_{\mathcal{F}_T}^2(\Omega,\mathbb{R}^n)$ and any $v_2(\cdot)\in\mathcal{U}_2[0,T]$, there exists a map $\Gamma:\mathcal{U}_2[0,T]\times L_{\mathcal{F}_T}^2(\Omega,\mathbb{R}^n)\rightarrow\mathcal{U}_1[0,T]$ such that
\begin{equation}
J_1(\Gamma(v_2(\cdot),\xi),v_2(\cdot);\xi)=\min_{v_1(\cdot)\in\mathcal{U}_1[0,T]}J_1(v_1(\cdot),v_2(\cdot);\xi).
\end{equation}
(ii)\ There exists a unique $\bar{v}_2(\cdot)\in\mathcal{U}_2[0,T]$ such that
\begin{equation}
J_2(\Gamma(\bar{v}_2(\cdot),\xi),\bar{v}_2(\cdot);\xi)=\min_{v_2(\cdot)\in\mathcal{U}_2[0,T]}J_2(\Gamma(\bar{v}_2(\cdot),\xi),v_2(\cdot);\xi).
\end{equation}
(iii)\ The optimal strategy of the follower is $\bar{v}_1(\cdot)=\Gamma(\bar{v}_2(\cdot),\xi)$.
\end{definition}

We call the above problem a {\it Stackelberg game of BSDE with partial information}.

\section{The General Problem}

\subsection{Optimization for The Follower}

In this subsection, we seek the necessary and sufficient conditions of the partial information optimal control for the follower.

Let $\xi\in L_{\mathcal{F}_T}^2(\Omega,\mathbb{R}^n)$ be given, giving the leader's strategy $v_2(\cdot)\in\mathcal{U}_2[0,T]$. Let $\bar{v}_1(\cdot)$ be an optimal control of the follower, and $(y^{\bar{v}_1,v_2}(\cdot),z^{\bar{v}_1,v_2}(\cdot),\tilde{z}^{\bar{v}_1,v_2}(\cdot))$ be the corresponding state trajectory. Let the process $x(\cdot)\in L_\mathcal{F}^2(0,T;\mathbb{R}^n)$ satisfy the following adjoint equation:
\begin{equation}\label{adjoint eq}
\left\{
\begin{aligned}
dx(t)=&-H_{1y}(t,y^{\bar{v}_1,v_2}(t),z^{\bar{v}_1,v_2}(t),\tilde{z}^{\bar{v}_1,v_2}(t),\bar{v}_1(t),v_2(t),x(t))dt\\
      &-H_{1z}(t,y^{\bar{v}_1,v_2}(t),z^{\bar{v}_1,v_2}(t),\tilde{z}^{\bar{v}_1,v_2}(t),\bar{v}_1(t),v_2(t),x(t))dW(t)\\
      &-H_{1\tilde{z}}(t,y^{\bar{v}_1,v_2}(t),z^{\bar{v}_1,v_2}(t),\tilde{z}^{\bar{v}_1,v_2}(t),\bar{v}_1(t),v_2(t),x(t))d\widetilde{W}(t),\ t\in[0,T],\\
 x(0)=&\ h_{1y}(y^{\bar{v}_1,v_2}(0)),
\end{aligned}
\right.
\end{equation}
where $H_{1y},H_{1z}$ and $H_{1\tilde{z}}$ denote the partial derivatives of $H_1$ with respect to $y,z$ and $\tilde{z}$, respectively, and the Hamiltonian function $H_1:[0,T]\times\mathbb{R}^n\times\mathbb{R}^{n\times m}\times\mathbb{R}^{n\times\widetilde{m}}\times\mathbb{R}^{k_1}\times\mathbb{R}^{k_2}\times\mathbb{R}^n\rightarrow\mathbb{R}$ is defined as
\begin{equation}\label{H1}
H_1(t,y,z,\tilde{z},v_1,v_2,x)=-L_1(t,y,z,\tilde{z},v_1,v_2)-\langle f(t,y,z,\tilde{z},v_1,v_2),x\rangle.
\end{equation}

The following two results are direct from Theorems 2.1 and 2.3 of Wang and Yu \cite{WY12}.

\begin{theorem}(Partial information maximum principle for the follower)
Let {\bf(A1)}, {\bf(A2)} hold and $\xi\in L_{\mathcal{F}_T}^2(\Omega,\mathbb{R}^n)$. Giving the leader's strategy $v_2(\cdot)\in\mathcal{U}_2[0,T]$. Let $\bar{v}_1(\cdot)\in\mathcal{U}_1[0,T]$ be an optimal control of the follower and $(y^{\bar{v}_1,v_2}(\cdot),z^{\bar{v}_1,v_2}(\cdot),\tilde{z}^{\bar{v}_1,v_2}(\cdot))$ be the corresponding state trajectory. Then we have
\begin{equation}\label{maximum condition1}
\mathbb{E}\big[\big\langle H_{1v_1}(t,y^{\bar{v}_1,v_2}(t),z^{\bar{v}_1,v_2}(t),\tilde{z}^{\bar{v}_1,v_2}(t),\bar{v}_1(t),v_2(t),x(t)),v_1-\bar{v}_1(t)\big\rangle\big|\mathcal{G}_{t}^1\big]\leq0,
\end{equation}
for $a.e.\ t\in[0,T]$,\ a.s., for any $v_1\in U_1$, where $x(\cdot)$ is the solution to the adjoint equation \eqref{adjoint eq}.
\end{theorem}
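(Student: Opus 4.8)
The statement is asserted to follow from Theorems 2.1 and 2.3 of Wang and Yu \cite{WY12}, so one legitimate route is simply to check that, with the leader's strategy $v_2(\cdot)$ frozen, the follower's problem \eqref{bsde1}--\eqref{cost functional} together with \textbf{(A1)}, \textbf{(A2)} and the convexity of $U_1$ is an instance of the controlled-BSDE partial-information problem treated there, and to quote their conclusion. For completeness I would give the self-contained convex-variation argument. The plan is: (i) perturb $\bar v_1(\cdot)$ inside $\mathcal{U}_1[0,T]$ and derive a variational BSDE; (ii) turn optimality into a variational inequality; (iii) use a duality between the variational BSDE and the \emph{forward} adjoint SDE \eqref{adjoint eq} to reduce everything to $H_{1v_1}$; and (iv) localize against $\mathcal{G}_t^1$-measurable test sets to obtain the conditional form \eqref{maximum condition1}.

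First I would fix an arbitrary $v_1(\cdot)\in\mathcal{U}_1[0,T]$ and, using convexity of $U_1$, set $v_1^\rho(\cdot)=\bar v_1(\cdot)+\rho\big(v_1(\cdot)-\bar v_1(\cdot)\big)\in\mathcal{U}_1[0,T]$ for $\rho\in[0,1]$, with corresponding state triple $(y^\rho,z^\rho,\tilde z^\rho)$. Under \textbf{(A1)} the map $\rho\mapsto(y^\rho,z^\rho,\tilde z^\rho)$ is $L^2$-differentiable at $\rho=0$, the derivative $(y_1,z_1,\tilde z_1)$ solving the variational BSDE
\begin{equation*}
-dy_1(t)=\big[f_yy_1(t)+f_zz_1(t)+f_{\tilde z}\tilde z_1(t)+f_{v_1}\big(v_1(t)-\bar v_1(t)\big)\big]dt-z_1(t)\,dW(t)-\tilde z_1(t)\,d\widetilde W(t),\quad y_1(T)=0,
\end{equation*}
where the derivatives of $f$ are evaluated along the optimal triple; the terminal condition is $0$ because $\xi$ in \eqref{bsde1} does not depend on the control. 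Justifying the $L^2$-convergence of the difference quotients is routine given the uniform boundedness in \textbf{(A1)} and the standard a priori estimates for BSDEs of Pardoux and Peng \cite{PP90}. Optimality of $\bar v_1(\cdot)$ then yields $\frac{d}{d\rho}J_1(v_1^\rho,v_2;\xi)\big|_{\rho=0^+}\ge0$, which by \textbf{(A2)} reads
\begin{equation*}
\mathbb{E}\bigg[\int_0^T\!\big(\langle L_{1y},y_1\rangle+\langle L_{1z},z_1\rangle+\langle L_{1\tilde z},\tilde z_1\rangle+\langle L_{1v_1},v_1-\bar v_1\rangle\big)dt+\langle h_{1y}(y^{\bar v_1,v_2}(0)),y_1(0)\rangle\bigg]\ge0.
\end{equation*}

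Next I would apply It\^o's formula to $\langle x(t),y_1(t)\rangle$ over $[0,T]$ and take expectations. Using the forward dynamics of $x(\cdot)$ in \eqref{adjoint eq}, the backward dynamics of $(y_1,z_1,\tilde z_1)$, the boundary data $y_1(T)=0$ and $x(0)=h_{1y}(y^{\bar v_1,v_2}(0))$, and the definition \eqref{H1} of $H_1$, all running terms carrying $y_1,z_1,\tilde z_1$ cancel against the adjoint drift and diffusion (here one must match the trace/matrix structure of the $z_1$, $\tilde z_1$ terms against $H_{1z}$, $H_{1\tilde z}$), while the terminal pairings eliminate the $\langle h_{1y},y_1(0)\rangle$ contribution. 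Combining the resulting identity with the variational inequality above leaves the reduced inequality $\mathbb{E}\big[\int_0^T\langle H_{1v_1}(t),v_1(t)-\bar v_1(t)\rangle dt\big]\le0$ for every $v_1(\cdot)\in\mathcal{U}_1[0,T]$, the sign reversal stemming from the two minus signs in \eqref{H1}. Finally I would specialize to the $\mathcal{G}^1$-admissible variation $v_1(\cdot)=\bar v_1(\cdot)+\mathbf{1}_{[t_0,t_0+\varepsilon]\times B}(\cdot)\big(\hat v-\bar v_1(\cdot)\big)$ with $\hat v\in U_1$ and $B\in\mathcal{G}_{t_0}^1$; admissibility forces $B$ to be $\mathcal{G}^1$-measurable, which is exactly where the partial-information structure enters. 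Dividing the reduced inequality by $\varepsilon$, letting $\varepsilon\to0$ by Lebesgue differentiation, and invoking the arbitrariness of $B\in\mathcal{G}_{t_0}^1$ together with the tower property delivers $\mathbb{E}\big[\langle H_{1v_1}(t),\hat v-\bar v_1(t)\rangle\big|\mathcal{G}_t^1\big]\le0$ for a.e. $t$, a.s., and all $\hat v\in U_1$, which is \eqref{maximum condition1}.

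I expect the main obstacle to be the duality bookkeeping in the It\^o step, namely pairing the forward adjoint SDE against the backward variational BSDE with the correct matrix/trace conventions so that the $f_z z_1$ and $f_{\tilde z}\tilde z_1$ terms cancel cleanly, followed by the measurability argument that upgrades the integrated inequality to the $\mathcal{G}_t^1$-conditional one; the differentiability and a priori estimates, though needed, are standard consequences of \textbf{(A1)}--\textbf{(A2)}.
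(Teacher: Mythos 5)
Your proposal is correct and takes essentially the same route as the paper: the paper offers no proof of this theorem beyond declaring it ``direct from Theorems 2.1 and 2.3 of Wang and Yu \cite{WY12}'', and your convex-variation argument---variational BSDE, duality via It\^{o}'s formula applied to $\langle x(t),y_1(t)\rangle$ so that the $f_y,f_z,f_{\tilde z}$ terms cancel and only $\langle L_{1v_1}+f_{v_1}^\top x,\,v_1-\bar v_1\rangle=-\langle H_{1v_1},v_1-\bar v_1\rangle$ survives, followed by localization over $\mathcal{G}_{t_0}^1$-measurable sets---is precisely the standard proof underlying that citation, with the sign bookkeeping through $H_1=-L_1-\langle f,x\rangle$ handled correctly. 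The only points left implicit (the null-set management in the Lebesgue-differentiation step, which needs countable generation of $\mathcal{G}_{t_0}^1$ or the equivalent trick of choosing $v_1$ equal to $\hat v$ on the progressively measurable set where the conditional expectation is positive, plus a countable dense subset of $U_1$) are routine and do not affect the validity of the argument.
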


\begin{theorem}(Partial information verification theorem for the follower)
Let {\bf(A1)}, {\bf(A2)} hold and $\xi\in L_{\mathcal{F}_T}^2(\Omega,\mathbb{R}^n)$. Giving the leader's strategy $v_2(\cdot)\in\mathcal{U}_2[0,T]$. Assume that $L_1$ is continuously differentiable in $v_1$, let $\bar{v}_1\in\mathcal{U}_1[0,T]$ be given such that
\begin{equation*}
\begin{aligned}
&L_{1y}(t,y^{\bar{v}_1,v_2}(t),z^{\bar{v}_1,v_2}(t),\tilde{z}^{\bar{v}_1,v_2}(t),\bar{v}_1(t),v_2(t)),\\
&L_{1z}(t,y^{\bar{v}_1,v_2}(t),z^{\bar{v}_1,v_2}(t),\tilde{z}^{\bar{v}_1,v_2}(t),\bar{v}_1(t),v_2(t)),\\
&L_{1\tilde{z}}(t,y^{\bar{v}_1,v_2}(t),z^{\bar{v}_1,v_2}(t),\tilde{z}^{\bar{v}_1,v_2}(t),\bar{v}_1(t),v_2(t)),\\
&L_{1v_1}(t,y^{\bar{v}_1,v_2}(t),z^{\bar{v}_1,v_2}(t),\tilde{z}^{\bar{v}_1,v_2}(t),\bar{v}_1(t),v_2(t))\in L_{\mathcal{F}}^2(0,T).
\end{aligned}
\end{equation*}
For any $(t,v_1)\in[0,T]\times U_1$, $L_1$ satisfies $\mathbb{E}\int_0^T|L_{1v_1}(t,y^{\bar{v}_1,v_2}(t),z^{\bar{v}_1,v_2}(t),\tilde{z}^{\bar{v}_1,v_2}(t),v_1,v_2(t))|dt<+\infty$. Suppose that the adjoint equation \eqref{adjoint eq} admits a solution $x(\cdot)\in L_{\mathcal{F}}^2(0,T;\mathbb{R}^n)$, and
\begin{equation}\label{maximum condition2}
\begin{aligned}
\mathbb{E}&\big[H_1(t,y^{\bar{v}_1,v_2}(t),z^{\bar{v}_1,v_2}(t),\tilde{z}^{\bar{v}_1,v_2}(t),\bar{v}_1(t),v_2(t),x(t))\big|\mathcal{G}_t^1\big]\\
&=\max_{v_1\in U_1}\mathbb{E}\big[H_1(t,y^{v_1,v_2}(t),z^{v_1,v_2}(t),\tilde{z}^{v_1,v_2}(t),v_1,v_2(t),x(t))\big|\mathcal{G}_t^1\big],\ \mbox{for all }t\in[0,T].
\end{aligned}
\end{equation}
Moreover, suppose $\mathbb{E}[H_{1v_1}(t,y^{v_1,v_2}(t),z^{v_1,v_2}(t),\tilde{z}^{v_1,v_2}(t),v_1,v_2(t),x(t))|\mathcal{G}_t^1]$ is continuous at $v_1=\bar{v}_1(t)$ for any $t\in[0,T]$. Suppose for all $t\in[0,T]$, $H_1(t,\cdot,\cdot,\cdot,\cdot,v_2(t),x(t))$ is concave, and $h_1(\cdot)$ is convex. Then $\bar{v}_1(\cdot)$ is an optimal control of the follower.
\end{theorem}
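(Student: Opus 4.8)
The plan is to prove the sufficient condition by the classical convex-duality argument underlying the stochastic maximum principle, adapted to the \emph{backward} state equation \eqref{bsde1} and to the partial-information (conditional) setting. Throughout I fix the leader's strategy $v_2(\cdot)$, write $(\bar y,\bar z,\bar{\tilde z})$ for the trajectory associated with $(\bar v_1,v_2)$ and $(y,z,\tilde z)$ for that of an arbitrary competitor $(v_1,v_2)$ with $v_1(\cdot)\in\mathcal U_1[0,T]$, and abbreviate by $\bar H_{1y}(t),\bar H_{1z}(t),\bar H_{1\tilde z}(t),\bar H_{1v_1}(t)$ the partial derivatives of the Hamiltonian \eqref{H1} evaluated along the barred trajectory at $(\bar v_1(t),v_2(t),x(t))$. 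First I would expand the cost difference $J_1(v_1,v_2;\xi)-J_1(\bar v_1,v_2;\xi)$ from \eqref{cost functional} and replace $L_1$ by $-H_1-\langle f,x\rangle$ according to \eqref{H1}. Convexity of $h_1$ together with the initial datum $x(0)=h_{1y}(\bar y(0))$ of \eqref{adjoint eq} gives the lower bound
\begin{equation*}
h_1(y(0))-h_1(\bar y(0))\ge\langle x(0),y(0)-\bar y(0)\rangle.
\end{equation*}

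The core computation is an application of It\^o's formula to $\langle x(t),y(t)-\bar y(t)\rangle$ on $[0,T]$, using the adjoint dynamics \eqref{adjoint eq} for $dx$ and the difference of the state equations \eqref{bsde1} for $d(y-\bar y)$. Since $y(T)-\bar y(T)=\xi-\xi=0$, integrating and taking expectations produces an identity for $\mathbb E\langle x(0),y(0)-\bar y(0)\rangle$; the $L^2_{\mathcal F}$-integrability hypotheses on $L_{1y},L_{1z},L_{1\tilde z},L_{1v_1}$ are exactly what guarantee the stochastic integrals are true martingales and drop out in expectation. Substituting this identity, and observing that the two $\langle f-\bar f,x\rangle$ terms cancel, I would arrive at
\begin{equation*}
J_1(v_1,v_2;\xi)-J_1(\bar v_1,v_2;\xi)\ge\mathbb E\int_0^T\big[-(H_1-\bar H_1)+\langle\bar H_{1y},y-\bar y\rangle+\langle\bar H_{1z},z-\bar z\rangle+\langle\bar H_{1\tilde z},\tilde z-\bar{\tilde z}\rangle\big]dt,
\end{equation*}
where $H_1$ and $\bar H_1$ denote $H_1$ along the two trajectories.

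Now I invoke concavity of $H_1(t,\cdot,\cdot,\cdot,\cdot,v_2(t),x(t))$ in $(y,z,\tilde z,v_1)$: the bracketed integrand is bounded below by $-\langle\bar H_{1v_1}(t),v_1(t)-\bar v_1(t)\rangle$, so
\begin{equation*}
J_1(v_1,v_2;\xi)-J_1(\bar v_1,v_2;\xi)\ge-\mathbb E\int_0^T\langle\bar H_{1v_1}(t),v_1(t)-\bar v_1(t)\rangle dt.
\end{equation*}
It remains to show the right-hand side is nonnegative, and this is where the maximum condition \eqref{maximum condition2} is used. The maximality of $\bar v_1(t)$ for the conditional Hamiltonian, together with the assumed continuity of $v_1\mapsto\mathbb E[H_{1v_1}(t,\ldots,v_1,v_2(t),x(t))\,|\,\mathcal G_t^1]$ at $v_1=\bar v_1(t)$, yields the pointwise variational inequality $\mathbb E[\langle\bar H_{1v_1}(t),v_1-\bar v_1(t)\rangle\,|\,\mathcal G_t^1]\le0$ for every $v_1\in U_1$, a.e.\ $t$, a.s. Since $v_1(\cdot)$ and $\bar v_1(\cdot)$ are $\mathcal G_t^1$-measurable, the tower property gives $\mathbb E\langle\bar H_{1v_1}(t),v_1(t)-\bar v_1(t)\rangle=\mathbb E\langle\mathbb E[\bar H_{1v_1}(t)\,|\,\mathcal G_t^1],v_1(t)-\bar v_1(t)\rangle\le0$, whence $J_1(v_1,v_2;\xi)\ge J_1(\bar v_1,v_2;\xi)$ and $\bar v_1(\cdot)$ is optimal.

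I expect the main obstacle to be this last step, namely passing from the global maximum condition \eqref{maximum condition2} to the pointwise conditional variational inequality. This amounts to differentiating a conditional expectation in the control variable and justifying the interchange of differentiation and conditioning, which is precisely why the continuity of $\mathbb E[H_{1v_1}\,|\,\mathcal G_t^1]$ at $\bar v_1(t)$ and the uniform-growth integrability furnished by \textbf{(A2)} are imposed; the $\mathcal G_t^1$-measurability of admissible controls is then what promotes the conditional inequality to the unconditional bound. The remaining care is routine: verifying the integrability needed to apply It\^o's formula and to discard the martingale terms, which is supplied by \textbf{(A1)}, \textbf{(A2)} and the stated $L^2_{\mathcal F}$-hypotheses.
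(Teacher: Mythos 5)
The paper never actually proves this theorem: immediately before stating Theorems 3.1 and 3.2 it declares that both ``are direct from Theorems 2.1 and 2.3 of Wang and Yu \cite{WY12}'', so the paper's own ``proof'' is a citation. Your argument is correct, and it is in substance a reconstruction of the proof of that cited verification theorem: expand $J_1(v_1,v_2;\xi)-J_1(\bar v_1,v_2;\xi)$, use convexity of $h_1$ and the initial datum $x(0)=h_{1y}(\bar y(0))$ of \eqref{adjoint eq} to bound the initial-cost difference by $\mathbb E\langle x(0),y(0)-\bar y(0)\rangle$, apply It\^o's formula to $\langle x(t),y(t)-\bar y(t)\rangle$ (the terminal term vanishes because both trajectories end at $\xi$, and the $\langle f-\bar f,x\rangle$ terms cancel against those coming from the definition \eqref{H1} of $H_1$), invoke the supergradient inequality for the concave $H_1$ to reduce everything to $-\mathbb E\int_0^T\langle\bar H_{1v_1}(t),v_1(t)-\bar v_1(t)\rangle dt$, and close with the first-order condition extracted from \eqref{maximum condition2} plus the tower property, using that $v_1(\cdot)$ and $\bar v_1(\cdot)$ are $\mathcal G_t^1$-adapted. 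These are exactly the steps of the Wang--Yu argument, so your route and the (outsourced) route of the paper coincide.

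One point deserves to be made explicit, and you half-flag it yourself. Condition \eqref{maximum condition2} is written with $y^{v_1,v_2}(t),z^{v_1,v_2}(t),\tilde z^{v_1,v_2}(t)$ carrying the superscript $v_1$, i.e.\ as if the trajectory moved with the competing control, whereas your variational-inequality step differentiates only in the explicit control slot with the trajectory frozen at $(\bar y,\bar z,\bar{\tilde z})$; under the literal varying-trajectory reading, a first-order condition at the maximum would also pick up derivatives of the state with respect to $v_1$, which appear nowhere in your (or the cited) argument. The frozen-trajectory reading is the one under which the assumed continuity of $v_1\mapsto\mathbb E[H_{1v_1}(\cdot)\,|\,\mathcal G_t^1]$ at $\bar v_1(t)$ yields $\mathbb E[\langle\bar H_{1v_1}(t),v_1-\bar v_1(t)\rangle\,|\,\mathcal G_t^1]\le0$ for all $v_1\in U_1$, and one should also note that a countable-dense-subset argument in $U_1$ is needed to obtain a single null set before substituting the $\mathcal G_t^1$-measurable random point $v_1(t)$ and taking expectations. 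With that reading (and that small measure-theoretic remark) made explicit, your proof is complete.
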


\subsection{Optimization for the leader}

In this subsection, we firstly restate the partial information stochastic optimal control problem of the leader in detail.

For any $v_2(\cdot)\in\mathcal{U}_2[0,T]$, by the maximum condition \eqref{maximum condition1}, we assume that a functional $\bar{v}_1(t)=\bar{v}_1(t,\hat{y}^{\bar{v}_1,\hat{v}_2}(t),\hat{z}^{\bar{v}_1,\hat{v}_2}(t),\hat{\tilde{z}}^{\bar{v}_1,\hat{v}_2}(t),\hat{v}_2(t),\hat{x}(t))$ is uniquely defined, where
\begin{equation}\label{conditional expectation}
\left\{
\begin{aligned}
&\hat{y}^{\bar{v}_1,\hat{v}_2}(t)=\mathbb{E}[y^{\bar{v}_1,v_2}(t)|\mathcal{G}_t^1],\ \hat{z}^{\bar{v}_1,\hat{v}_2}(t)=\mathbb{E}[z^{\bar{v}_1,v_2}(t)|\mathcal{G}_t^1],\ \hat{\tilde{z}}^{\bar{v}_1,\hat{v}_2}(t)=\mathbb{E}[\tilde{z}^{\bar{v}_1,v_2}(t)|\mathcal{G}_t^1],\\
&\hat{v}_2(t)=\mathbb{E}[v_2(t)|\mathcal{G}_t^1],\ \hat{x}(t)=\mathbb{E}[x(t)|\mathcal{G}_t^1].
\end{aligned}
\right.
\end{equation}
Then, substituting $\bar{v}_1(t)$ into the game system \eqref{bsde1}, and combining it with the corresponding adjoint equation \eqref{adjoint eq}, we derive the following FBSDE
\begin{equation}\label{fbsde2}
\left\{
\begin{aligned}
-dy^{v_2}(t)&=f^L(t,y^{v_2}(t),z^{v_2}(t),\tilde{z}^{v_2}(t),v_2(t))dt-z^{v_2}(t)dW(t)-\tilde{z}^{v_2}(t)d\widetilde{W}(t),\\
       dx(t)&=\big[f_y^L(t,y^{v_2}(t),z^{v_2}(t),\tilde{z}^{v_2}(t),v_2(t))^\top x(t)\\
            &\qquad+L_{1y}^L(t,y^{v_2}(t),z^{v_2}(t),\tilde{z}^{v_2}(t),v_2(t))\big]dt\\
            &\quad+\big[f_z^L(t,y^{v_2}(t),z^{v_2}(t),\tilde{z}^{v_2}(t),v_2(t))^\top x(t)\\
            &\qquad+L_{1z}^L(t,y^{v_2}(t),z^{v_2}(t),\tilde{z}^{v_2}(t),v_2(t))\big]dW(t)\\
            &\quad+\big[f_{\tilde{z}}^L(t,y^{v_2}(t),z^{v_2}(t),\tilde{z}^{v_2}(t),v_2(t))^\top x(t)\\
            &\qquad+L_{1\tilde{z}}^L(t,y^{v_2}(t),z^{v_2}(t),\tilde{z}^{v_2}(t),v_2(t))\big]d\widetilde{W}(t),\ t\in[0,T],\\
        x(0)&=h_{1y}(y^{v_2}(0)),\ y^{v_2}(T)=\xi,
\end{aligned}
\right.
\end{equation}
where for the simplicity of notations, we have denoted $y^{v_2}(\cdot)=y^{\bar{v}_1,v_2}(\cdot),z^{v_2}(\cdot)=z^{\bar{v}_1,v_2}(\cdot),\tilde{z}^{v_2}(\cdot)=\tilde{z}^{\bar{v}_1,v_2}(\cdot)$, and have defined $\Phi^L$ on $[0,T]\times\mathbb{R}^n\times\mathbb{R}^{n\times m}\times\mathbb{R}^{n\times\widetilde{m}}\times U_2$ as
\begin{equation*}
\begin{aligned}
&\Phi^{L}(t,y^{v_2}(\cdot),z^{v_2}(\cdot),\tilde{z}^{v_2}(\cdot),v_2(t))\\
&:=\Phi(t,y^{\bar{v}_1,v_2}(\cdot),z^{\bar{v}_1,v_2}(\cdot),\tilde{z}^{\bar{v}_1,v_2}(\cdot),\bar{v}_1(t,\hat{y}^{\bar{v}_1,\hat{v}_2}(t),\hat{z}^{\bar{v}_1,\hat{v}_2}(t),
 \hat{\tilde{z}}^{\bar{v}_1,\hat{v}_2}(t),
\hat{v}_2(t),\hat{x}(t)),v_2(t)),
\end{aligned}
\end{equation*}
for $\Phi=f,L_1$, respectively. Then we redefine
\begin{equation}\label{cost func2}
\begin{aligned}
&\widetilde{J}_2(v_2(\cdot);\xi):=J_2(\bar{v}_1(\cdot),v_2(\cdot);\xi)\\
&=\mathbb{E}\bigg[\int_0^TL_2(t,y^{\bar{v}_1,v_2}(t),z^{\bar{v}_1,v_2}(t),\tilde{z}^{\bar{v}_1,v_2}(t),\bar{v}_1(t,\hat{y}^{\bar{v}_1,\hat{v}_2}(t),\hat{z}^{\bar{v}_1,\hat{v}_2}(t),\hat{\tilde{z}}^{\bar{v}_1,
\hat{v}_2}(t),\hat{v}_2(t),\hat{x}(t)),v_2(t))dt\\
&\qquad+h_2(y^{\bar{v}_1,v_2}(0))\bigg]:=\mathbb{E}\bigg[\int_0^TL_2^L(t,y^{v_2}(t),z^{v_2}(t),\tilde{z}^{v_2}(t),v_2(t))dt+h_2(y^{v_2}(0))\bigg],
\end{aligned}
\end{equation}
where $L_2^L:[0,T]\times \mathbb{R}^n\times\mathbb{R}^{n\times m}\times\mathbb{R}^{n\times\widetilde{m}}\times U_2\rightarrow\mathbb{R}$ is similarly defined as above. The target of the leader is to find an optimal control $\bar{v}_2(\cdot)\in\mathcal{U}_2[0,T]$.

Suppose that there exists an optimal control $\bar{v}_2(\cdot)\in\mathcal{U}_2[0,T]$ for the leader, and the corresponding ``state trajectory" $(y^{\bar{v}_1,\bar{v}_2}(\cdot),z^{\bar{v}_1,\bar{v}_2}(\cdot),\tilde{z}^{\bar{v}_1,\bar{v}_2}(\cdot),\bar{x}(\cdot))$ is the solution to \eqref{fbsde2}. Define the Hamiltonian function of the leader $H_2:[0,T]\times\mathbb{R}^n\times\mathbb{R}^{n\times m}\times\mathbb{R}^{n\times\widetilde{m}}\times\mathbb{R}^{k_2}\times\mathbb{R}^n\times\mathbb{R}^n\times\mathbb{R}^{n\times m}\times\mathbb{R}^{n\times\widetilde{m}}\times\mathbb{R}^n\rightarrow\mathbb{R}$ as
\begin{equation}\label{H2}
\begin{aligned}
&H_2(t,y^{v_2},z^{v_2},\tilde{z}^{v_2},v_2,x,p,q_1,q_2,Q)=\big\langle b^L(t,y^{v_2},z^{v_2},\tilde{z}^{v_2},v_2),p\big\rangle+\big\langle\sigma_1^L(t,y^{v_2},z^{v_2},\tilde{z}^{v_2},v_2),q_1\big\rangle\\
&+\big\langle\sigma_2^L(t,y^{v_2},z^{v_2},\tilde{z}^{v_2},v_2),q_2\big\rangle+\big\langle f^L(t,y^{v_2},z^{v_2},\tilde{z}^{v_2},v_2),Q\big\rangle+L_2^L(t,y^{v_2},z^{v_2},\tilde{z}^{v_2},v_2),
\end{aligned}
\end{equation}
where
\begin{equation*}
\begin{aligned}
&b^L(t,y^{v_2},z^{v_2},\tilde{z}^{v_2},v_2):=f_y^L(t,y^{v_2},z^{v_2},\tilde{z}^{v_2},v_2)^\top x+L_{1y}^L(t,y^{v_2},z^{v_2},\tilde{z}^{v_2},v_2),\\
&\sigma_1^L(t,y^{v_2},z^{v_2},\tilde{z}^{v_2},v_2):=f_z^L(t,y^{v_2},z^{v_2},\tilde{z}^{v_2},v_2)^\top x+L_{1z}^L(t,y^{v_2},z^{v_2},\tilde{z}^{v_2},v_2),\\
&\sigma_2^L(t,y^{v_2},z^{v_2},\tilde{z}^{v_2},v_2):=f_{\tilde{z}}^L(t,y^{v_2},z^{v_2},\tilde{z}^{v_2},v_2)^\top x+L_{1\tilde{z}}^L(t,y^{v_2},z^{v_2},\tilde{z}^{v_2},v_2).
\end{aligned}
\end{equation*}

Let $(p(\cdot),q_1(\cdot),q_2(\cdot),Q(\cdot))\in\mathbb{R}^n\times\mathbb{R}^{n\times m}\times\mathbb{R}^{n\times\widetilde{m}}\times\mathbb{R}^n$ be the unique $\mathcal{F}_t$-adapted solution to the adjoint FBSDE of the leader:
\begin{equation}\label{fbsde33}
\left\{
\begin{aligned}
-dp(t)=&\bigg\{\bar{b}_x^L(t)^\top p(t)+\mathbb{E}\big[\bar{b}_{\hat{x}}^{L}(t)^\top p(t)\big|\mathcal{G}_t^1\big]
        +\sum_{j=1}^m\big[\bar{\sigma}_{1x}^{Lj}(t)^\top q_1^j(t)+\mathbb{E}\big[\bar{\sigma}_{1\hat{x}}^{Lj}(t)^\top q_1^j(t)\big|\mathcal{G}_t^1\big]\big]\\
       &+\sum_{j=1}^{\widetilde{m}}\big[\bar{\sigma}_{2x}^{Lj}(t)^\top q_2^j(t)+\mathbb{E}\big[\bar{\sigma}_{2\hat{x}}^{Lj}(t)^{\top}q_2^j(t)\big|\mathcal{G}_t^1\big]\big]
        +\bar{f}_x^L(t)^{\top}Q(t)+\mathbb{E}\big[\bar{f}_{\hat{x}}^L(t)^{\top}Q(t)\big|\mathcal{G}_t^1\big]\\
       &+\bar{L}_{2x}^L(t)^{\top}+\mathbb{E}\big[\bar{L}_{2\hat{x}}^L(t)^{\top}\big|\mathcal{G}_t^1\big]\bigg\}dt-q_1(t)dW(t)-q_2(t)d\widetilde{W}(t),\\
 dQ(t)=&\bigg\{\bar{b}_y^L(t)^\top p(t)+\mathbb{E}\big[\bar{b}_{\hat{y}}^{L}(t)^\top p(t)\big|\mathcal{G}_t^1\big]
        +\sum_{j=1}^m\big[\bar{\sigma}_{1y}^{Lj}(t)^\top q_1^j(t)+\mathbb{E}\big[\bar{\sigma}_{1\hat{y}}^{Lj}(t)^\top q_1^j(t)\big|\mathcal{G}_t^1\big]\big]\\
       &+\sum_{j=1}^{\widetilde{m}}\big[\bar{\sigma}_{2y}^{Lj}(t)^\top q_2^j(t)+\mathbb{E}\big[\bar{\sigma}_{2\hat{y}}^{Lj}(t)^\top q_2^j(t)\big|\mathcal{G}_t^1\big]\big]
        +\bar{f}_y^L(t)^{\top}Q(t)+\mathbb{E}\big[\bar{f}_{\hat{y}}^L(t)^{\top}Q(t)\big|\mathcal{G}_t^1\big]\\
       &+\bar{L}_{2y}^L(t)^{\top}+\mathbb{E}\big[\bar{L}_{2\hat{y}}^L(t)^{\top}\big|\mathcal{G}_t^1\big]\bigg\}dt\\
       &+\bigg\{\bar{b}_z^{L}(t)^\top p(t)+\mathbb{E}\big[\bar{b}_{\hat{z}}^{L}(t)^\top p(t)\big|\mathcal{G}_t^1\big]
        +\sum_{j=1}^m\big[\bar{\sigma}_{1z}^{Lj}(t)^\top q_1^j(t)+\mathbb{E}\big[\bar{\sigma}_{1\hat{z}}^{Lj}(t)^\top q_1^j(t)\big|\mathcal{G}_t^1\big]\big]\\
       &+\sum_{j=1}^{\widetilde{m}}\big[\bar{\sigma}_{2z}^{Lj}(t)^\top q_2^j(t)+\mathbb{E}\big[\bar{\sigma}_{2\hat{z}}^{Lj}(t)^\top q_2^j(t)\big|\mathcal{G}_t^1\big]\big]
        +\bar{f}_z^L(t)^{\top}Q(t)+\mathbb{E}\big[\bar{f}_{\hat{z}}^L(t)^{\top}Q(t)\big|\mathcal{G}_t^1\big]\\
       &+\bar{L}_{2z}^L(t)^{\top}+\mathbb{E}\big[\bar{L}_{2\hat{z}}^L(t)^{\top}\big|\mathcal{G}_t^1\big]\bigg\}dW(t)\\
       &+\bigg\{\bar{b}_{\tilde{z}}^L(t)^\top p(t)+\mathbb{E}\big[\bar{b}_{\hat{\tilde{z}}}^L(t)^{\top}p(t)\big|\mathcal{G}_t^1\big]
        +\sum_{j=1}^m\big[\bar{\sigma}_{1\tilde{z}}^{Lj}(t)^\top q_1^j(t)+\mathbb{E}\big[\bar{\sigma}_{1\hat{\tilde{z}}}^{Lj}(t)^\top q_1^j(t)\big|\mathcal{G}_t^1\big]\big]\\
       &+\sum_{j=1}^{\widetilde{m}}\big[\bar{\sigma}_{2\tilde{z}}^{Lj}(t)^\top q_2^j(t)+\mathbb{E}\big[\bar{\sigma}_{2\hat{\tilde{z}}}^{Lj}(t)^\top q_2^j(t)\big|\mathcal{G}_t^1\big]\big]
        +\bar{f}_{\tilde{z}}^L(t)^{\top}Q(t)+\mathbb{E}\big[\bar{f}_{\hat{\tilde{z}}}^L(t)^{\top}Q(t)\big|\mathcal{G}_t^1\big]\\
       &+\bar{L}_{2\tilde{z}}^L(t)^{\top}+\mathbb{E}\big[\bar{L}_{2\hat{\tilde{z}}}^L(t)^{\top}\big|\mathcal{G}_t^1\big]\bigg\}d\widetilde{W}(t),\ t\in[0,T],\\
  p(T)=&\ 0,\ Q(0)=h_{2y}(y^{\bar{v}_2}(0))+p(0)h_{1yy}(y^{\bar{v}_2}(0)),
\end{aligned}
\right.
\end{equation}
where we have used $\bar{\Phi}^L(t)=\Phi^L(t,y^{\bar{v}_2}(t),z^{\bar{v}_2}(t),\tilde{z}^{\bar{v}_2}(t),\bar{v}_2(t))$ for $\Phi=b,\sigma_1,\sigma_2,L_1,L_2$ and all their derivatives.

Now, we obtain the following theorems for the leader.

\begin{theorem}(Partial information maximum principle for the leader)
Suppose {\bf(A1)}, {\bf(A2)} holds, let $\bar{v}_2(\cdot)\in\mathcal{U}_2[0,T]$ be an optimal control of the leader and $(y^{\bar{v}_2}(\cdot),z^{\bar{v}_2}(\cdot),\tilde{z}^{\bar{v}_2}(\cdot),\bar{x}(\cdot))$ be the corresponding optimal state trajectory. Let $(p(\cdot),q_1(\cdot),q_2(\cdot),Q(\cdot))$ be the solution to \eqref{fbsde33}, then
\begin{equation}\label{maximum condition for leader1}
\begin{aligned}
&\mathbb{E}\big[\big\langle H_{2v_2}(t,y^{\bar{v}_2}(t),z^{\bar{v}_2}(t),\tilde{z}^{\bar{v}_2}(t),\bar{v}_2(t),\bar{x}(t),p(t),q_1(t),q_2(t),Q(t)),v_2-\bar{v}_2(t)\big\rangle\\
&+\big\langle \mathbb{E}\big[H_{2\hat{v}_2}(t,y^{\bar{v}_2}(t),z^{\bar{v}_2}(t),\tilde{z}^{\bar{v}_2}(t),\bar{v}_2(t),\bar{x}(t),p(t),q_1(t),q_2(t),Q(t))\big|\mathcal{G}_t^1\big],
 \hat{v}_2-\hat{\bar{v}}_2(t)\big\rangle\big|\mathcal{G}_t^2\big]\\
&\geq0,\ \mbox{for any }v_2\in U_2,\ a.e.\ t\in[0,T],\ a.s.
\end{aligned}
\end{equation}
\begin{proof}
The maximum condition \eqref{maximum condition for leader1} is similar to that in Theorem 2.3 of Shi et al. \cite{SWX16}, which can be obtained applying the convex variation and adjoint technique. We omit the detail for the space limit. See also Zuo and Min \cite{ZM13}, Wang et al. \cite{WXX17} for optimal control problems of mean-field FBSDEs with partial information.
\end{proof}
\end{theorem}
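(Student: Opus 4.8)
The plan is to derive the necessary condition \eqref{maximum condition for leader1} by a convex (Gateaux) variation combined with the duality furnished by the adjoint FBSDE \eqref{fbsde33}, which is the standard route for a first-order maximum principle when the control domain is convex and the coefficients are differentiable. First I would fix an arbitrary $v_2(\cdot)\in\mathcal{U}_2[0,T]$ and, using the convexity of $\mathcal{U}_2[0,T]$, form $v_2^\epsilon(\cdot)=\bar{v}_2(\cdot)+\epsilon(v_2(\cdot)-\bar{v}_2(\cdot))\in\mathcal{U}_2[0,T]$ for $\epsilon\in[0,1]$. Writing $(y^\epsilon,z^\epsilon,\tilde{z}^\epsilon,x^\epsilon)$ for the solution of the state system \eqref{fbsde2} under $v_2^\epsilon$, the essential feature to keep in view is that every coefficient $\Phi^L$ (for $\Phi=f,L_1,L_2$) depends on $v_2$ both directly and through its conditional mean $\hat{v}_2=\mathbb{E}[v_2|\mathcal{G}_t^1]$, so the perturbation is propagated simultaneously by $v_2-\bar{v}_2$ and by $\mathbb{E}[v_2-\bar{v}_2|\mathcal{G}_t^1]$; this is precisely what forces the conditional mean-field structure of \eqref{fbsde33}.

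Second, I would obtain the variational equation by differentiating the state trajectory in $\epsilon$ at $\epsilon=0^+$. This produces a linear coupled conditional mean-field FBSDE for the derivatives $(y_1,z_1,\tilde{z}_1,x_1)$, in which the forward component carries initial datum $x_1(0)=h_{1yy}(y^{\bar{v}_2}(0))y_1(0)$ (from differentiating $x(0)=h_{1y}(y(0))$) and the backward component carries terminal datum $y_1(T)=0$ (since $\xi$ is fixed). Under {\bf(A1)}--{\bf(A2)} and the boundedness of the partial derivatives, a standard stability estimate for linear mean-field FBSDEs gives $\mathbb{E}\sup_{t}\big(|y^\epsilon-y^{\bar{v}_2}|^2+|x^\epsilon-x^{\bar{v}_2}|^2\big)+\mathbb{E}\int_0^T\big(|z^\epsilon-z^{\bar{v}_2}|^2+|\tilde{z}^\epsilon-\tilde{z}^{\bar{v}_2}|^2\big)dt=O(\epsilon^2)$, which validates the first-order expansion of both the state and the cost.

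Third, from the optimality of $\bar{v}_2(\cdot)$ I would record $\frac{d}{d\epsilon}\widetilde{J}_2(v_2^\epsilon;\xi)\big|_{\epsilon=0^+}\ge0$, expressed through the derivatives of $L_2^L$ and $h_2$ paired against $(y_1,z_1,\tilde{z}_1,x_1)$. I would then apply It\^{o}'s formula to the duality pairing $\langle p(t),x_1(t)\rangle+\langle Q(t),y_1(t)\rangle$, integrate over $[0,T]$ and take expectation. The boundary terms at $t=0$ combine, via $Q(0)=h_{2y}(y^{\bar{v}_2}(0))+p(0)h_{1yy}(y^{\bar{v}_2}(0))$, to reproduce the variation of the initial cost $h_2(y^{\bar{v}_2}(0))$, the coupling contribution through $x(0)$ cancelling against the $p(0)h_{1yy}$ piece, while the terminal terms vanish because $p(T)=0$ and $y_1(T)=0$. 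The drift and diffusion coefficients of \eqref{fbsde33} are chosen exactly so that, after this integration, all $y_1,z_1,\tilde{z}_1,x_1$ contributions cancel and only the terms carrying $v_2-\bar{v}_2$ and $\hat{v}_2-\hat{\bar{v}}_2$ survive, which assemble into $H_{2v_2}$ and $H_{2\hat{v}_2}$ by the definition \eqref{H2}.

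The hard part will be the bookkeeping of the conditional-expectation terms across the nested filtrations $\mathcal{G}_t^1\subseteq\mathcal{G}_t^2\subseteq\mathcal{F}_t$. Each coefficient enters the adjoint as a pair of the form $\bar{\Phi}_x^L(t)^\top(\cdot)+\mathbb{E}[\bar{\Phi}_{\hat{x}}^L(t)^\top(\cdot)|\mathcal{G}_t^1]$, and to collapse these in the duality I would repeatedly use the tower property together with the identity $\mathbb{E}\big[\mathbb{E}[\alpha|\mathcal{G}_t^1]\,\beta\big]=\mathbb{E}\big[\alpha\,\mathbb{E}[\beta|\mathcal{G}_t^1]\big]$ to shift the conditioning off the adjoint coefficients and onto the variational directions. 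This rearrangement turns the inequality into $\mathbb{E}\int_0^T\big(\langle H_{2v_2}(t),v_2-\bar{v}_2\rangle+\langle\mathbb{E}[H_{2\hat{v}_2}(t)|\mathcal{G}_t^1],\hat{v}_2-\hat{\bar{v}}_2\rangle\big)dt\ge0$. Finally, since $v_2(\cdot)-\bar{v}_2(\cdot)$ is $\mathcal{G}_t^2$-adapted and otherwise arbitrary, a localization over $\mathcal{G}_t^2$-measurable sets (taking variations supported on small time intervals and conditioning on $\mathcal{G}_t^2$) upgrades the integrated inequality to the pointwise conditional form \eqref{maximum condition for leader1}, which completes the argument.
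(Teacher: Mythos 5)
Your proposal is correct and follows exactly the route the paper itself invokes: the paper's proof of this theorem is only a citation to the convex variation and adjoint (duality) technique of Shi et al. \cite{SWX16} (see also \cite{ZM13}, \cite{WXX17}), and your sketch fills in precisely those standard steps — convex perturbation, variational equation, $O(\epsilon^2)$ stability, It\^{o} duality with $(p,Q)$ using $p(T)=0$, $y_1(T)=0$ and the initial conditions, the tower-property bookkeeping for the $\mathcal{G}_t^1$-conditional terms, and localization to obtain the pointwise $\mathcal{G}_t^2$-conditional inequality. No gap; this is the same argument, written out.
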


\begin{theorem}(Partial information verification theorem for the leader)
Let {\bf(A1)}, {\bf(A2)} hold, $\bar{v}_2(\cdot)\in\mathcal{U}_2$ and $(y^{\bar{v}_2}(\cdot),z^{\bar{v}_2}(\cdot),\tilde{z}^{\bar{v}_2}(\cdot),\bar{x}(\cdot))$ be the corresponding state trajectory with $h_{1yy}(y)\equiv\bar{h}_1\in\mathcal{S}^n$. Let $(p(\cdot),q_1(\cdot),q_2(\cdot),Q(\cdot))$ be the solution to \eqref{fbsde33}. For each $t\in[0,T]$, suppose that $H_2$ is convex in $(y,z,\tilde{z},v_2,x)$ and $h_2$ is convex in y, and
\begin{equation}\label{maximum condition for leader2}
\begin{aligned}
&\mathbb{E}\Big[H_2(t,y^{\bar{v}_2}(t),z^{\bar{v}_2}(t),\tilde{z}^{\bar{v}_2}(t),\bar{v}_2(t),\bar{x}(t),p(t),q_1(t),q_2(t),Q(t))\\
&\quad+\mathbb{E}\big[H_2(t,y^{\bar{v}_2}(t),z^{\bar{v}_2}(t),\tilde{z}^{\bar{v}_2}(t),\bar{v}_2(t),\bar{x}(t),p(t),q_1(t),q_2(t),Q(t))\big|\mathcal{G}_t^1\big]\Big|\mathcal{G}_t^2\Big]\\
&=\min_{v_2\in U_2}\mathbb{E}\Big[H_2(t,y^{v_2}(t),z^{v_2}(t),\tilde{z}^{v_2}(t),v_2,\bar{x}(t),p(t),q_1(t),q_2(t),Q(t))\\
&\qquad+\mathbb{E}\big[H_2(t,y^{v_2}(t),z^{v_2}(t),\tilde{z}^{v_2}(t),v_2,\bar{x}(t),p(t),q_1(t),q_2(t),Q(t))\big|\mathcal{G}_t^1\big]\Big|\mathcal{G}_t^2\Big],
\end{aligned}
\end{equation}
holds for $a.e.\ t\in[0,T],\ a.s.$, then $\bar{v}_2(\cdot)$ is an optimal control of the leader.
\end{theorem}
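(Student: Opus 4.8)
The plan is to establish the sufficiency directly, i.e. to verify $\widetilde J_2(v_2(\cdot);\xi)\geq\widetilde J_2(\bar v_2(\cdot);\xi)$ for every $v_2(\cdot)\in\mathcal U_2[0,T]$, following the Hamiltonian--duality route standard for verification theorems but adapted to the coupled conditional mean-field FBSDE \eqref{fbsde2} and its adjoint \eqref{fbsde33}. Set $\delta y=y^{v_2}-y^{\bar v_2}$, $\delta z=z^{v_2}-z^{\bar v_2}$, $\delta\tilde z=\tilde z^{v_2}-\tilde z^{\bar v_2}$, $\delta x=x^{v_2}-\bar x$, and write $H_2^{v_2}$, $\bar H_2$ for $H_2$ in \eqref{H2} evaluated along the perturbed and the candidate trajectories with the common adjoint $(p,q_1,q_2,Q)$. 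First I would split $\widetilde J_2(v_2(\cdot);\xi)-\widetilde J_2(\bar v_2(\cdot);\xi)$ into the running part $\mathbb{E}\int_0^T[L_2^L(v_2)-L_2^L(\bar v_2)]dt$ and the initial part, bounding the latter below by $\mathbb{E}\langle h_{2y}(y^{\bar v_2}(0)),\delta y(0)\rangle$ using the convexity of $h_2$. Both trajectories share the terminal value $\xi$, so $\delta y(T)=0$, and $p(T)=0$.

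Next I would apply It\^o's formula to the pairings $\langle p(t),\delta x(t)\rangle$ and $\langle Q(t),\delta y(t)\rangle$ on $[0,T]$ and take expectations, the stochastic integrals vanishing in mean by the integrability from \textbf{(A1)}, \textbf{(A2)}. Because $\delta y(T)=0$ and $p(T)=0$, only the $t=0$ boundary terms survive; here the hypothesis $h_{1yy}\equiv\bar h_1\in\mathcal S^n$ is essential, since it makes $h_{1y}$ affine and hence $\delta x(0)=\bar h_1\,\delta y(0)$, which is exactly the relation that the term $p(0)h_{1yy}(y^{\bar v_2}(0))$ in the initial value $Q(0)$ of \eqref{fbsde33} is built to absorb, so that the boundary contributions telescope consistently with the $h_2$-convexity bound. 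As the drift and diffusion coefficients of \eqref{fbsde33} are assembled from the partial derivatives of $H_2$ — the $\mathbb{E}[\,\cdot\,|\mathcal{G}_t^1]$ parts coming from the conditional-mean (hat) arguments that enter $b^L,\sigma_1^L,\sigma_2^L,f^L,L_2^L$ through $\bar v_1(t,\hat y,\hat z,\hat{\tilde z},\hat v_2,\hat x)$ — the resulting identity rewrites these boundary terms as a time integral of $H_2$-gradient pairings against $\delta y,\delta z,\delta\tilde z,\delta x$ plus the cross terms $\langle b^L(v_2)-\bar b^L,p\rangle$, $\langle\sigma_i^L(v_2)-\bar\sigma_i^L,q_i\rangle$, $\langle f^L(v_2)-\bar f^L,Q\rangle$.

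I would then rewrite $L_2^L(v_2)-L_2^L(\bar v_2)$ through \eqref{H2}; its $\langle b^L,p\rangle$, $\langle\sigma_i^L,q_i\rangle$, $\langle f^L,Q\rangle$ contributions cancel exactly against the It\^o cross terms, so that $\widetilde J_2(v_2(\cdot);\xi)-\widetilde J_2(\bar v_2(\cdot);\xi)$ is bounded below by
\[
\mathbb{E}\int_0^T\Big[(H_2^{v_2}-\bar H_2)-\big\langle H_{2y}+\mathbb{E}[H_{2\hat y}|\mathcal{G}_t^1],\delta y\big\rangle-\cdots-\big\langle H_{2x}+\mathbb{E}[H_{2\hat x}|\mathcal{G}_t^1],\delta x\big\rangle\Big]dt.
\]
The joint convexity of $H_2$ in $(y,z,\tilde z,v_2,x)$ and the conditional-mean arguments then gives $H_2^{v_2}-\bar H_2\geq\langle H_{2y},\delta y\rangle+\langle H_{2\hat y},\delta\hat y\rangle+\cdots+\langle H_{2v_2},\delta v_2\rangle+\langle H_{2\hat v_2},\delta\hat v_2\rangle$, and the tower property $\mathbb{E}\langle H_{2\hat y},\delta\hat y\rangle=\mathbb{E}\langle\mathbb{E}[H_{2\hat y}|\mathcal{G}_t^1],\delta y\rangle$ (and likewise in $z,\tilde z,x,v_2$), with $\delta\hat y=\mathbb{E}[\delta y|\mathcal{G}_t^1]$ by \eqref{conditional expectation}, matches the hat-gradient terms to the $\mathbb{E}[\,\cdot\,|\mathcal{G}_t^1]$ terms of \eqref{fbsde33} and cancels the state-gradient terms; the convexity in the $x$-argument is precisely what reconciles the forward state $x^{v_2}$ in $H_2^{v_2}$ with the frozen $\bar x$ used in \eqref{maximum condition for leader2}. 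What survives is $\mathbb{E}\int_0^T\langle H_{2v_2}+\mathbb{E}[H_{2\hat v_2}|\mathcal{G}_t^1],\delta v_2\rangle dt$.

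Finally I would show this residual is nonnegative. Since $H_2$ is assumed only convex, not differentiable, in $v_2$, I would pass from the Hamiltonian minimum condition \eqref{maximum condition for leader2} to the needed sign via the Clarke generalized gradient (which coincides with the convex subdifferential here): optimality of $\bar v_2(t)$ for the conditional expectation minimized over the convex set $U_2$ produces, after unwinding the nested conditioning $\mathcal{G}_t^1\subseteq\mathcal{G}_t^2$, a subgradient nonnegative along every admissible direction $v_2-\bar v_2(t)$ — precisely the variational inequality \eqref{maximum condition for leader1} — and convexity lets that subgradient serve in the telescoped bound, yielding $\mathbb{E}\int_0^T\langle H_{2v_2}+\mathbb{E}[H_{2\hat v_2}|\mathcal{G}_t^1],\delta v_2\rangle dt\geq0$ and hence the claim. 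The main obstacle I anticipate is the conditional mean-field bookkeeping in the third step — matching the hat-gradient terms to the $\mathbb{E}[\,\cdot\,|\mathcal{G}_t^1]$ drift and diffusion terms of \eqref{fbsde33} through the tower property while keeping the two nested filtrations straight — together with the non-smooth final step, where the Clarke generalized gradient is needed to turn the minimum condition into a directional inequality without assuming differentiability in $v_2$.
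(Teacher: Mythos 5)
Your proposal is correct and follows essentially the same route as the paper's proof: the same decomposition of $\widetilde J_2(v_2(\cdot);\xi)-\widetilde J_2(\bar v_2(\cdot);\xi)$ into running and initial parts, the same convexity bound via $h_2$, the same It\^o pairings $\langle p,\delta x\rangle$ and $\langle Q,\delta y\rangle$ with the boundary terms reconciled through $h_{1yy}\equiv\bar h_1$ and $Q(0)=h_{2y}(y^{\bar v_2}(0))+p(0)\bar h_1$, and the same Clarke-subdifferential treatment (Lemmas 3.1 and 3.2) of the non-smooth $v_2$-dependence. The only cosmetic difference is that the paper inserts the zero element of the $v_2$-subdifferential into the joint convexity inequality so that no residual $v_2$-term ever appears, whereas you retain $\big\langle H_{2v_2}+\mathbb{E}[H_{2\hat v_2}|\mathcal{G}_t^1],\,\delta v_2\big\rangle$ and argue its nonnegativity from the minimum condition --- an equivalent piece of bookkeeping.
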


Before we prove this theorem, let us review some preliminaries of the Clarke generalized gradient, which was used to derive sufficient conditions in Yong and Zhou \cite{YZ99}.

Let $\mathcal{M}:\mathcal{X}\rightarrow\mathbb{R}$ be a locally Lipschitz continuous function, where $\mathcal{X}$ is a convex set in $\mathbb{R}^n$.

\begin{definition}
The Clarke generalized gradient of $\mathcal{M}$ at $\hat{x}\in\mathcal{X}$, denoted by $\partial\mathcal{M}(\hat{x})$, is a set defined by
\begin{equation*}
\partial\mathcal{M}(\hat{x}):=\bigg\{\zeta\in\mathbb{R}^n; \langle\zeta,\xi\rangle\leq\limsup_{x\in\mathcal{X},x+h\xi\in\mathcal{X},x\rightarrow\hat{x},h\rightarrow0+}\frac{\mathcal{M}(x+h\xi)-\mathcal{M}(x)}{h},\ \forall\ \xi\in\mathbb{R}^n\bigg\}.
\end{equation*}
\end{definition}

\begin{lemma}
The following properties hold:

(1) $\partial\mathcal{M}(\hat{x})$ is a nonempty convex set and satisfying $\partial(-\mathcal{M})(\hat{x})=-\partial\mathcal{M}(\hat{x})$.

(2) For any set $N\subset\mathcal{X}$ of measure zero,
\begin{equation*}
\partial\mathcal{M}(\hat{x})=co\Big\{\lim_{i\rightarrow\infty}\mathcal{M}_x(x_i): \mathcal{M}\mbox{\ is differentiable at\ }x_i,\ x_i\not\in N\ and\ x_i\rightarrow\hat{x}\Big\},
\end{equation*}
where ``co" denotes the convex hull of a set.

(3) If $\hat{x}$ attains the maximum or minimum of $\mathcal{M}$ over $\mathcal{X}$, then $0\in\partial\mathcal{M}(\hat{x})$.

(4) If $\mathcal{M}$ is a convex (respectively, concave) function, then $p\in\partial\mathcal{M}(\hat{x})$ if and only if
\begin{equation*}
\mathcal{M}(x)-\mathcal{M}(\hat{x})\geq\big(respectively,\ \leq\big)\langle p,\ x-\hat{x}\rangle,\ \forall x\in\mathcal{X}.
\end{equation*}
\end{lemma}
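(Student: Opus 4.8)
The plan is to route all four properties through one auxiliary object, the Clarke generalized directional derivative
\[
\mathcal{M}^\circ(\hat{x};\xi):=\limsup_{x\in\mathcal{X},\,x+h\xi\in\mathcal{X},\,x\to\hat{x},\,h\to0+}\frac{\mathcal{M}(x+h\xi)-\mathcal{M}(x)}{h},
\]
so that, by the very definition in the statement, $\partial\mathcal{M}(\hat{x})=\{\zeta:\langle\zeta,\xi\rangle\leq\mathcal{M}^\circ(\hat{x};\xi)\ \forall\xi\}$. The first step is to record three structural facts about $\xi\mapsto\mathcal{M}^\circ(\hat{x};\xi)$ that follow purely from local Lipschitz continuity: finiteness, with $|\mathcal{M}^\circ(\hat{x};\xi)|\leq K|\xi|$ where $K$ is the local Lipschitz constant; positive homogeneity; and subadditivity, obtained by inserting $\pm\mathcal{M}(x+h\xi_1)$ and letting $x+h\xi_1$ serve as the base point in the $\limsup$. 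Thus $\mathcal{M}^\circ(\hat{x};\cdot)$ is a finite sublinear functional. I would also record the reflection identity $(-\mathcal{M})^\circ(\hat{x};\xi)=\mathcal{M}^\circ(\hat{x};-\xi)$, proved by the substitution $y=x+h\xi$.

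With these in hand, properties (1), (3), (4) form the ``soft'' part. For (1): convexity of $\partial\mathcal{M}(\hat{x})$ is immediate from the linearity of $\zeta\mapsto\langle\zeta,\xi\rangle$ in the defining inequality; nonemptiness follows from the Hahn--Banach theorem, since a finite sublinear functional majorizes at least one linear functional $\zeta$, which then lies in $\partial\mathcal{M}(\hat{x})$; and the identity $\partial(-\mathcal{M})(\hat{x})=-\partial\mathcal{M}(\hat{x})$ drops out of the reflection identity after the change of variable $\eta=-\xi$. For (3), if $\hat{x}$ minimizes $\mathcal{M}$ over $\mathcal{X}$ then, restricting the $\limsup$ to the constant base point $x\equiv\hat{x}$, one gets $\mathcal{M}^\circ(\hat{x};\xi)\geq\limsup_{h\to0+}\frac{\mathcal{M}(\hat{x}+h\xi)-\mathcal{M}(\hat{x})}{h}\geq0$ for every admissible direction $\xi$, hence $0\in\partial\mathcal{M}(\hat{x})$; the maximum case is obtained by applying this to $-\mathcal{M}$ and invoking (1). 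For (4), the key observation is that for convex $\mathcal{M}$ the ordinary one-sided derivative $\mathcal{M}'(\hat{x};\xi)=\lim_{h\to0+}\frac{\mathcal{M}(\hat{x}+h\xi)-\mathcal{M}(\hat{x})}{h}$ exists (monotone difference quotients) and coincides with $\mathcal{M}^\circ(\hat{x};\xi)$; the subgradient inequality then follows because the quotient is nondecreasing in $h$, so $\langle\zeta,\xi\rangle\leq\mathcal{M}'(\hat{x};\xi)\leq\mathcal{M}(\hat{x}+\xi)-\mathcal{M}(\hat{x})$, and taking $x=\hat{x}+\xi$ yields the claim, while the converse follows by dividing the subgradient inequality by $h$ and letting $h\to0+$. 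The concave case again passes through (1).

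The main obstacle is property (2), Clarke's gradient formula, which is genuinely deeper than the rest. The inclusion $\supseteq$ is comparatively routine: at any point $x_i$ of differentiability one has $\mathcal{M}_x(x_i)\in\partial\mathcal{M}(x_i)$, and since $\partial\mathcal{M}$ has a closed graph (upper semicontinuity, itself a consequence of the uniform Lipschitz bound), every limit of such gradients lies in $\partial\mathcal{M}(\hat{x})$; as this set is closed and convex by (1), it contains the convex hull. The reverse inclusion $\subseteq$ is where Rademacher's theorem enters: $\mathcal{M}$ is differentiable off a Lebesgue-null set, which may be enlarged by the prescribed $N$ without altering the gradient limits. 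Denoting the right-hand convex hull by $G$, I would show that its support function agrees with $\mathcal{M}^\circ(\hat{x};\cdot)$; the essential estimate $\sup_{\zeta\in G}\langle\zeta,\xi\rangle\geq\mathcal{M}^\circ(\hat{x};\xi)$ is obtained by writing $\mathcal{M}(x+h\xi)-\mathcal{M}(x)$ as an integral of $\langle\mathcal{M}_x(\cdot),\xi\rangle$ along the segment (valid for almost every parallel line, by Fubini together with a.e.\ differentiability), choosing points of differentiability arbitrarily close to $\hat{x}$ that realize the $\limsup$, and extracting a convergent subsequence of their gradients. Since $G$ and $\partial\mathcal{M}(\hat{x})$ are both closed convex sets with the same support function, they coincide. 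The care needed to handle the exceptional null set, the measurability of the segment restriction, and the subsequence extraction is precisely what makes this step substantially more delicate than (1), (3), (4); for these fine points I would follow the argument of Clarke's nonsmooth analysis as invoked in Yong and Zhou \cite{YZ99}.
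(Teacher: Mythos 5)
The paper contains no proof of this lemma at all: it is presented as a review of standard preliminaries on the Clarke generalized gradient, with the reader referred to Yong and Zhou \cite{YZ99} (and, through that reference, to Clarke's nonsmooth analysis). Your proposal therefore does not so much diverge from the paper's argument as supply one where the paper gives only a citation, and what you supply is essentially the classical proof, correctly organized. Routing all four properties through the generalized directional derivative $\mathcal{M}^\circ(\hat{x};\cdot)$ --- finite, positively homogeneous, subadditive, hence sublinear --- and reading $\partial\mathcal{M}(\hat{x})$ as the set of linear functionals it dominates is exactly how the standard references proceed: Hahn--Banach gives nonemptiness and the intersection-of-half-spaces description gives convexity in (1); the reflection identity $(-\mathcal{M})^\circ(\hat{x};\xi)=\mathcal{M}^\circ(\hat{x};-\xi)$ gives the symmetry in (1); restricting the $\limsup$ to the constant base point $x\equiv\hat{x}$ gives (3); and the identification of $\mathcal{M}^\circ(\hat{x};\cdot)$ with the one-sided directional derivative for convex $\mathcal{M}$, together with monotonicity of the difference quotients, gives (4) in both directions. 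For (2) you correctly isolate the genuinely deep ingredients --- Rademacher's theorem, upper semicontinuity (closed graph) of $\partial\mathcal{M}$ for the easy inclusion, and the support-function comparison of two compact convex sets for the hard one --- and your deferral of the fine measure-theoretic points to Clarke is consistent with the paper's own level of treatment, which defers the entire lemma. One caveat worth noting: the definition used in this paper takes the $\limsup$ only over $x\in\mathcal{X}$ with $x+h\xi\in\mathcal{X}$, so if $\hat{x}$ lies on the boundary of the convex set $\mathcal{X}$, the intermediate point $x+h\xi_1$ inserted in your subadditivity argument need not belong to $\mathcal{X}$, and in (3) some directions may admit no admissible pairs $(x,h)$ whatsoever; a fully rigorous version would either assume $\hat{x}$ is an interior point of $\mathcal{X}$ or first extend $\mathcal{M}$ to a Lipschitz function on a neighborhood and use Clarke's unconstrained definition. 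In the paper's application (a convex control region of full dimension) this is harmless, so I would regard it as a point to flag rather than a gap: your reconstruction buys self-containedness of parts (1), (3), (4) at the modest cost of Hahn--Banach and Rademacher, while the paper's citation buys brevity.
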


\begin{lemma}
Let $\rho(\cdot,\cdot)$ be a convex or concave function on $\mathbb{R}^d\times U$ with $U\subseteq\mathbb{R}^k$ being a convex body. Assume that $\rho(x,u)$ is Lipschitz continuous in $u$, differentiable in $x$, and $\rho_x(x,u)$ is continuous in $(x,u)$. For a given $(\bar{x},\bar{u})\in\mathbb{R}^d\times U$, if $r\in\partial_u\rho(\bar{x},\bar{u})$, then $(\rho_x(\bar{x},\bar{u}),r)\in\partial_{x,u}\rho(\bar{x},\bar{u})$.
\end{lemma}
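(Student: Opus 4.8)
The plan is to reduce the claim to the subgradient inequality characterization of the Clarke gradient for convex functions (Lemma 3.1(4)) and then to verify that inequality by splitting a one-dimensional directional difference quotient into an $x$-increment and a $u$-increment. First I would dispose of the concave case: if $\rho$ is concave then $-\rho$ is convex, and by Lemma 3.1(1) one has $\partial_u(-\rho)(\bar x,\bar u)=-\partial_u\rho(\bar x,\bar u)$ and $\partial_{x,u}(-\rho)(\bar x,\bar u)=-\partial_{x,u}\rho(\bar x,\bar u)$, while $(-\rho)_x=-\rho_x$; hence applying the convex case to $-\rho$ with the subgradient $-r$ yields the concave case. So I may assume $\rho$ is convex, noting that a finite convex function on $\mathbb{R}^d\times U$ with $U$ a convex body is automatically locally Lipschitz, so the Clarke machinery applies and all one-sided directional derivatives exist.

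By Lemma 3.1(4) it suffices to show, for every $(x,u)\in\mathbb{R}^d\times U$, the inequality
\[
\rho(x,u)-\rho(\bar x,\bar u)\ \ge\ \langle\rho_x(\bar x,\bar u),\,x-\bar x\rangle+\langle r,\,u-\bar u\rangle.
\]
Fix such a point and set $\xi=x-\bar x$, $\eta=u-\bar u$; since $U$ is convex, $\bar u+h\eta=(1-h)\bar u+hu\in U$ for $h\in[0,1]$, so $\psi(h):=\rho(\bar x+h\xi,\bar u+h\eta)$ is a finite convex function on $[0,1]$. Monotonicity of the secant slopes of a convex function gives $\rho(x,u)-\rho(\bar x,\bar u)=\psi(1)-\psi(0)\ge\psi'(0+)$, so it remains to bound the right derivative $\psi'(0+)$ from below by $\langle\rho_x(\bar x,\bar u),\xi\rangle+\langle r,\eta\rangle$.

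The key step is to evaluate $\psi'(0+)$ by writing the difference quotient as $A(h)+B(h)$, where $A(h)=h^{-1}[\rho(\bar x+h\xi,\bar u+h\eta)-\rho(\bar x,\bar u+h\eta)]$ isolates the $x$-increment and $B(h)=h^{-1}[\rho(\bar x,\bar u+h\eta)-\rho(\bar x,\bar u)]$ isolates the $u$-increment. For $A(h)$, differentiability in $x$ together with the fundamental theorem of calculus gives $A(h)=\int_0^1\langle\rho_x(\bar x+sh\xi,\bar u+h\eta),\xi\rangle\,ds$, and since $\rho_x$ is continuous in $(x,u)$ the integrand converges uniformly in $s\in[0,1]$ to $\langle\rho_x(\bar x,\bar u),\xi\rangle$, whence $A(h)\to\langle\rho_x(\bar x,\bar u),\xi\rangle$. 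For $B(h)$, convexity of $h\mapsto\rho(\bar x,\bar u+h\eta)$ makes $B$ nondecreasing with a limit equal to the partial directional derivative $\rho'_u(\bar x,\bar u;\eta)$, and because $r\in\partial_u\rho(\bar x,\bar u)$, Lemma 3.1(4) yields $B(h)\ge\langle r,\eta\rangle$ for every $h$, so $\lim_{h\to0+}B(h)\ge\langle r,\eta\rangle$. As both limits exist, $\psi'(0+)=\langle\rho_x(\bar x,\bar u),\xi\rangle+\lim_{h\to0+}B(h)\ge\langle\rho_x(\bar x,\bar u),\xi\rangle+\langle r,\eta\rangle$, which is exactly the bound needed. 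This proves the subgradient inequality for arbitrary $(x,u)$ and therefore $(\rho_x(\bar x,\bar u),r)\in\partial_{x,u}\rho(\bar x,\bar u)$.

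I expect the main obstacle to be precisely the point that a naive two-segment decomposition (move first in $u$, then in $x$) would produce $\rho_x$ evaluated at $(\bar x,u)$ rather than at $(\bar x,\bar u)$, and the resulting discrepancy $\langle\rho_x(\bar x,u)-\rho_x(\bar x,\bar u),\,x-\bar x\rangle$ has no definite sign. Passing instead to the joint directional derivative along $(\xi,\eta)$ is what resolves this: there the $u$-perturbation is of order $h$, so continuity of $\rho_x$ pins the $x$-slope at the base point $(\bar x,\bar u)$ in the limit. The Lipschitz hypothesis in $u$ and the convex-body assumption on $U$ serve only to guarantee that $\rho$ is locally Lipschitz and that the relevant feasible directions and the partial Clarke gradient $\partial_u\rho$ are well defined.
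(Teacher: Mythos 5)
Your proof is correct. Note, however, that the paper itself gives no proof of this lemma: it is stated as a preliminary on the Clarke generalized gradient quoted from Yong and Zhou \cite{YZ99}, so the comparison is with the standard argument there rather than with anything in this paper. Your argument is essentially that classical one: reduce the concave case to the convex case via $\partial(-\mathcal{M})=-\partial\mathcal{M}$, verify the joint subgradient inequality of Lemma 3.1-(4) by bounding $\rho(x,u)-\rho(\bar x,\bar u)$ below by the one-sided derivative $\psi'(0+)$ along the joint direction $(\xi,\eta)$, and split the difference quotient into the $x$-increment $A(h)$ (handled by continuity of $\rho_x$ at $(\bar x,\bar u)$, since the $u$-argument $\bar u+h\eta$ collapses to $\bar u$ in the limit) and the $u$-increment $B(h)$ (handled by the partial subgradient inequality for $r$, using $\bar u+h\eta=(1-h)\bar u+hu\in U$). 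The only cosmetic difference from the textbook version is that you represent $A(h)$ by the fundamental theorem of calculus, $A(h)=\int_0^1\langle\rho_x(\bar x+sh\xi,\bar u+h\eta),\xi\rangle\,ds$, where Yong--Zhou invoke the mean value theorem; both steps rest on the same hypotheses and yield the same limit. Your closing remark correctly identifies why the two-segment decomposition (first $u$, then $x$) would fail --- the gradient would be evaluated at $(\bar x,u)$ with an error term of indefinite sign --- which is precisely the difficulty the joint directional derivative circumvents. All auxiliary steps (monotonicity of secant slopes, existence of both limits, the convexity of $h\mapsto\rho(\bar x,\bar u+h\eta)$) check out, so the proposal is a complete and correct proof.
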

\noindent{\it Proof of Theorem 3.4.}
By the condition \eqref{maximum condition for leader2} and Lemma 3.1-(3), we have
\begin{equation}
\begin{aligned}
0\in&\ \mathbb{E}\big[\partial_{v_2}H_2\big(t,y^{\bar{v}_2}(t),z^{\bar{v}_2}(t),\tilde{z}^{\bar{v}_2}(t),\bar{v}_2(t),\bar{x}(t),p(t),q_1(t),q_2(t),Q(t)\big)\\
    &+\mathbb{E}\big[\partial_{\hat{v}_2}H_2\big(t,y^{\bar{v}_2}(t),z^{\bar{v}_2}(t),\tilde{z}^{\bar{v}_2}(t),\bar{v}_2(t),\bar{x}(t),p(t),q_1(t),q_2(t),Q(t)\big)\big|\mathcal{G}_t^1\big]\big|\mathcal{G}_t^2\big].
\end{aligned}
\end{equation}
By Lemma 3.2, we further conclude that
\begin{equation}
\begin{aligned}
&\big(H_{2x,\hat{x}},H_{2y,\hat{y}},H_{2z,\hat{z}},H_{2\tilde{z},\hat{\tilde{z}}},0\big)\\
&\in\partial_{x,\hat{x},y,\hat{y},z,\hat{z},\tilde{z},\hat{\tilde{z}},v_2,\hat{v}_2}H_2\big(t,y^{\bar{v}_2}(t),z^{\bar{v}_2}(t),\tilde{z}^{\bar{v}_2}(t),\bar{v}_2(t),\bar{x}(t),p(t),q_1(t),q_2(t),Q(t)\big),
\end{aligned}
\end{equation}
where
\begin{equation*}
\begin{aligned}
H_{2\Phi,\hat{\Phi}}:&=H_{2\Phi,\hat{\Phi}}\big(t,y^{\bar{v}_2}(t),z^{\bar{v}_2}(t),\tilde{z}^{\bar{v}_2}(t),\bar{v}_2(t),\bar{x}(t),p(t),q_1(t),q_2(t),Q(t)\big)\\
                     &=\bar{b}_\Phi^L(t)^\top p(t)+\mathbb{E}\big[\bar{b}_{\hat{\Phi}}^L(t)^\top p(t)\big|\mathcal{G}_t^1\big]+\sum_{j=1}^m\Big[\bar{\sigma}_{1\Phi}^{Lj}(t)^\top q_1^j(t)
                      +\mathbb{E}\big[\bar{\sigma}_{1\hat{\Phi}}^{Lj}(t)^\top q_1^j(t)\big|\mathcal{G}_t^1\big]\Big]\\
                     &\quad+\sum_{j=1}^{\widetilde{m}}\Big[\bar{\sigma}_{2\Phi}^{Lj}(t)^\top q_2^j(t)+\mathbb{E}\big[\bar{\sigma}_{2\hat{\Phi}}^{Lj}(t)^\top q_2^j(t)\big|\mathcal{G}_t^1\big]\Big]
                      +\bar{f}_\Phi^L(t)^\top Q(t)+\mathbb{E}\big[\bar{f}_{\hat{{\Phi}}}^L(t)^\top Q(t)\big|\mathcal{G}_t^1\big]\\
                     &\quad+\bar{L}_{2\Phi}^L(t)^\top+\mathbb{E}\big[\bar{L}_{2\hat{\Phi}}^L(t)^\top\big|\mathcal{G}_t^1\big],\quad\mbox{for\ }\Phi=x,y,z,\tilde{z}.
\end{aligned}
\end{equation*}
Since $H_2$ is convex in $(x,\hat{x},y,\hat{y},z,\hat{z},\tilde{z},\hat{\tilde{z}},v_2,\bar{v}_2)$, one obtains
\begin{equation}\label{H2convex}
\hspace{-4mm}\begin{aligned}
    &\int_0^T\Big\{H_2\big(t,y^{v_2}(t),z^{v_2}(t),\tilde{z}^{v_2}(t),v_2(t),x(t),p(t),q_1(t),q_2(t),Q(t)\big)\\
    &\qquad-H_2\big(t,y^{\bar{v}_2}(t),z^{\bar{v}_2}(t),\tilde{z}^{\bar{v}_2}(t),\bar{v}_2(t),\bar{x}(t),p(t),q_1(t),q_2(t),Q(t)\big)\Big\}dt\\
\geq&\int_0^T\Big\{\big\langle H_{2x,\hat{x}}\big(t,y^{\bar{v}_2}(t),z^{\bar{v}_2}(t),\tilde{z}^{\bar{v}_2}(t),\bar{v}_2(t),\bar{x}(t),p(t),q_1(t),q_2(t),Q(t)\big),x^{v_2}(t)-x^{\bar{v}_2}(t)\big\rangle\\
    &\quad+\big\langle H_{2y,\hat{y}}\big(t,y^{\bar{v}_2}(t),z^{\bar{v}_2}(t),\tilde{z}^{\bar{v}_2}(t),\bar{v}_2(t),\bar{x}(t),p(t),q_1(t),q_2(t),Q(t)\big),y^{v_2}(t)-y^{\bar{v}_2}(t)\big\rangle\\
    &\quad+\big\langle H_{2z,\hat{z}}\big(t,y^{\bar{v}_2}(t),z^{\bar{v}_2}(t),\tilde{z}^{\bar{v}_2}(t),\bar{v}_2(t),\bar{x}(t),p(t),q_1(t),q_2(t),Q(t)\big),z^{v_2}(t)-z^{\bar{v}_2}(t)\big\rangle\\
    &\quad+\big\langle H_{2\tilde{z},\hat{\tilde{z}}}\big(t,y^{\bar{v}_2}(t),z^{\bar{v}_2}(t),\tilde{z}^{\bar{v}_2}(t),\bar{v}_2(t),\bar{x}(t),p(t),q_1(t),q_2(t),Q(t)\big),\tilde{z}^{v_2}(t)-\tilde{z}^{\bar{v}_2}(t)\big\rangle\Big\}dt.
\end{aligned}
\end{equation}
For any $v_2(\cdot)\in\mathcal{U}_2$, then we consider
\begin{equation*}
\widetilde{J}_2(v_2(\cdot);\xi)-\widetilde{J}_2(\bar{v}_2(\cdot);\xi)=\textrm{I}+\textrm{II},
\end{equation*}
with
\begin{equation*}
\begin{aligned}
\textrm{I}&=\mathbb{E}\int_0^T\Big[L_2^L(t,y^{v_2}(t),z^{v_2}(t),\tilde{z}^{v_2}(t),v_2(t))-L_2^L(t,y^{\bar{v}_2}(t),z^{\bar{v}_2}(t),\tilde{z}^{\bar{v}_2}(t),\bar{v}_2(t))\Big]dt,\\
\textrm{II}&=\mathbb{E}\big[h_2(y^{v_2}(0))-h_2(y^{\bar{v}_2}(0))\big].
\end{aligned}
\end{equation*}
Since $h_2$ is convex in $y$, we get
\begin{equation}\label{h2convex}
\textrm{II}\geq\mathbb{E}\big[\big\langle h_{2y}(y^{\bar{v}_2}(0)),y^{v_2}(0)-y^{\bar{v}_2}(0)\big\rangle\big].
\end{equation}
Applying It\^{o}'s formula to $-\langle Q(\cdot),y^{v_2}(\cdot)-y^{\bar{v}_2}(\cdot)\rangle$, then taking expectation on both sides, we have
\begin{equation*}
\begin{aligned}
&\mathbb{E}\big[\langle Q(0), y^{v_2}(0)-y^{\bar{v}_2}(0)\rangle-\langle Q(T), y^{v_2}(T)-y^{\bar{v}_2}(T)\rangle\big]\\
&=\mathbb{E}\int_0^T\Big\{-\big\langle H_{2y,\hat{y}}\big(t,y^{\bar{v}_2}(t),z^{\bar{v}_2}(t),\tilde{z}^{\bar{v}_2}(t),\bar{v}_2(t),\bar{x}(t),p(t),q_1(t),q_2(t),Q(t)\big),y^{v_2}(t)-y^{\bar{v}_2}(t)\big\rangle\\
&\qquad-\big\langle H_{2z,\hat{z}}\big(t,y^{\bar{v}_2}(t),z^{\bar{v}_2}(t),\tilde{z}^{\bar{v}_2}(t),\bar{v}_2(t),\bar{x}(t),p(t),q_1(t),q_2(t),Q(t)\big),z^{v_2}(t)-z^{\bar{v}_2}(t)\big\rangle\\
&\qquad-\big\langle H_{2\tilde{z},\hat{\tilde{z}}}\big(t,y^{\bar{v}_2}(t),z^{\bar{v}_2}(t),\tilde{z}^{\bar{v}_2}(t),\bar{v}_2(t),\bar{x}(t),p(t),q_1(t),q_2(t),Q(t)\big),\tilde{z}^{v_2}(t)-\tilde{z}^{\bar{v}_2}(t)\big\rangle\\
&\qquad+\big\langle Q(t), f^L(t,y^{v_2}(t),z^{v_2}(t),\tilde{z}^{v_2}(t),v_2(t))-f^L(t,y^{\bar{v}_2}(t),z^{\bar{v}_2}(t),\tilde{z}^{\bar{v}_2}(t),\bar{v}_2(t))\big\rangle\Big\}dt.
\end{aligned}
\end{equation*}
Similarly, we get
\begin{equation*}
\begin{aligned}
&\mathbb{E}\big[\langle p(0), x^{v_2}(0)-x^{\bar{v}_2}(0)\rangle-\langle p(T), x^{v_2}(T)-x^{\bar{v}_2}(T)\rangle\big]\\
&=\mathbb{E}\int_0^T\Big\{\langle H_{2x,\hat{x}}\big(t,y^{\bar{v}_2}(t),z^{\bar{v}_2}(t),\tilde{z}^{\bar{v}_2}(t),\bar{v}_2(t),\bar{x}(t),p(t),q_1(t),q_2(t),Q(t)\big),x^{v_2}(t)-x^{\bar{v}_2}(t)\rangle\\
&\qquad-\langle p(t),b^L(t,y^{v_2}(t),z^{v_2}(t),\tilde{z}^{v_2}(t),v_2(t))-b^L(t,y^{\bar{v}_2}(t),z^{\bar{v}_2}(t),\tilde{z}^{\bar{v}_2}(t),\bar{v}_2(t))\rangle\\
&\qquad-\langle q_1(t),\sigma_1^L(t,y^{v_2}(t),z^{v_2}(t),\tilde{z}^{v_2}(t),v_2(t))-\sigma_1^L(t,y^{\bar{v}_2}(t),z^{\bar{v}_2}(t),\tilde{z}^{\bar{v}_2}(t),\bar{v}_2(t))\rangle\\
&\qquad-\langle q_2(t),\sigma_2^L(t,y^{v_2}(t),z^{v_2}(t),\tilde{z}^{v_2}(t),v_2(t))-\sigma_2^L(t,y^{\bar{v}_2}(t),z^{\bar{v}_2}(t),\tilde{z}^{\bar{v}_2}(t),\bar{v}_2(t))\rangle\Big\}dt.
\end{aligned}
\end{equation*}
With the help of the initial value and terminal value of $Q(\cdot)$ and $p(\cdot)$, respectively, in the equation \eqref{fbsde33}, and the condition $h_{1yy}(y)\equiv\bar{h}_1$, we have
\begin{equation}\label{c1}
\begin{aligned}
&\mathbb{E}\big[\big\langle h_{2y}(y^{\bar{v}_2}(0)),y^{v_2}(0)-y^{\bar{v}_2}(0)\big\rangle+\big\langle p(0)\bar{h}_1, y^{v_2}(0)-y^{\bar{v}_2}(0)\big\rangle\big]\\
&=\mathbb{E}\int_0^T\Big\{-\big\langle H_{2y,\hat{y}}\big(t,y^{\bar{v}_2}(t),z^{\bar{v}_2}(t),\tilde{z}^{\bar{v}_2}(t),\bar{v}_2(t),\bar{x}(t),p(t),q_1(t),q_2(t),Q(t)\big),y^{v_2}(t)-y^{\bar{v}_2}(t)\big\rangle\\
&\qquad-\big\langle H_{2z,\hat{z}}\big(t,y^{\bar{v}_2}(t),z^{\bar{v}_2}(t),\tilde{z}^{\bar{v}_2}(t),\bar{v}_2(t),\bar{x}(t),p(t),q_1(t),q_2(t),Q(t)\big),z^{v_2}(t)-z^{\bar{v}_2}(t)\big\rangle\\
&\qquad-\big\langle H_{2\tilde{z},\hat{\tilde{z}}}\big(t,y^{\bar{v}_2}(t),z^{\bar{v}_2}(t),\tilde{z}^{\bar{v}_2}(t),\bar{v}_2(t),\bar{x}(t),p(t),q_1(t),q_2(t),Q(t)\big),\tilde{z}^{v_2}(t)-\tilde{z}^{\bar{v}_2}(t)\big\rangle\\
&\qquad+\big\langle Q(t), f^L(t,y^{v_2}(t),z^{v_2}(t),\tilde{z}^{v_2}(t),v_2(t))-f^L(t,y^{\bar{v}_2}(t),z^{\bar{v}_2}(t),\tilde{z}^{\bar{v}_2}(t),\bar{v}_2(t))\big\rangle\Big\}dt,
\end{aligned}
\end{equation}
and
\begin{equation}\label{c2}
\begin{aligned}
&\mathbb{E}\big[\langle p(0)\bar{h}_1,y^{v_2}(0)-y^{\bar{v}_2}(0)\rangle\big]
 \leq\mathbb{E}\big[\langle p(0),h_{1y}(y^{v_2}(0))-h_{1y}(y^{\bar{v}_2}(0))\rangle\big]=\mathbb{E}\big[\langle p(0),x^{v_2}(0)-x^{\bar{v}_2}(0)\rangle\big]\\
&=\mathbb{E}\int_0^T\Big\{\big\langle H_{2x,\hat{x}}\big(t,y^{\bar{v}_2}(t),z^{\bar{v}_2}(t),\tilde{z}^{\bar{v}_2}(t),\bar{v}_2(t),\bar{x}(t),p(t),q_1(t),q_2(t),Q(t)\big),x^{v_2}(t)-x^{\bar{v}_2}(t)\big\rangle\\
&\qquad-\big\langle p(t),b^L(t,y^{v_2}(t),z^{v_2}(t),\tilde{z}^{v_2}(t),v_2(t))-b^L(t,y^{\bar{v}_2}(t),z^{\bar{v}_2}(t),\tilde{z}^{\bar{v}_2}(t),\bar{v}_2(t))\big\rangle\\
&\qquad-\big\langle q_1(t),\sigma_1^L(t,y^{v_2}(t),z^{v_2}(t),\tilde{z}^{v_2}(t),v_2(t))-\sigma_1^L(t,y^{\bar{v}_2}(t),z^{\bar{v}_2}(t),\tilde{z}^{\bar{v}_2}(t),\bar{v}_2(t))\big\rangle\\
&\qquad-\big\langle q_2(t),\sigma_2^L(t,y^{v_2}(t),z^{v_2}(t),\tilde{z}^{v_2}(t),v_2(t))-\sigma_2^L(t,y^{\bar{v}_2}(t),z^{\bar{v}_2}(t),\tilde{z}^{\bar{v}_2}(t),\bar{v}_2(t))\big\rangle\Big\}dt.
\end{aligned}
\end{equation}
Putting \eqref{c2} into \eqref{c1}, we derive the following inequality
\begin{equation}
\begin{aligned}
&\mathbb{E}\big\langle h_{2y}(y^{\bar{v}_2}(0)),y^{v_2}(0)-y^{\bar{v}_2}(0)\big\rangle\\
&\geq\mathbb{E}\int_0^T\Big\{-\big\langle H_{2x,\hat{x}}\big(t,y^{\bar{v}_2}(t),z^{\bar{v}_2}(t),\tilde{z}^{\bar{v}_2}(t),\bar{v}_2(t),\bar{x}(t),p(t),q_1(t),q_2(t),Q(t)\big),x^{v_2}(t)-x^{\bar{v}_2}(t)\big\rangle\\
&\qquad-\big\langle H_{2y,\hat{y}}\big(t,y^{\bar{v}_2}(t),z^{\bar{v}_2}(t),\tilde{z}^{\bar{v}_2}(t),\bar{v}_2(t),\bar{x}(t),p(t),q_1(t),q_2(t),Q(t)\big),x^{v_2}(t)-x^{\bar{v}_2}(t)\big\rangle\\
&\qquad-\big\langle H_{2z,\hat{z}}\big(t,y^{\bar{v}_2}(t),z^{\bar{v}_2}(t),\tilde{z}^{\bar{v}_2}(t),\bar{v}_2(t),\bar{x}(t),p(t),q_1(t),q_2(t),Q(t)\big),z^{v_2}(t)-z^{\bar{v}_2}(t)\big\rangle\\
&\qquad-\big\langle H_{2\tilde{z},\hat{\tilde{z}}}\big(t,y^{\bar{v}_2}(t),z^{\bar{v}_2}(t),\tilde{z}^{\bar{v}_2}(t),\bar{v}_2(t),\bar{x}(t),p(t),q_1(t),q_2(t),Q(t)\big),\tilde{z}^{v_2}(t)-\tilde{z}^{\bar{v}_2}(t)\big\rangle\\
&\qquad+\big\langle Q(t), f^L(t,y^{v_2}(t),z^{v_2}(t),\tilde{z}^{v_2}(t),v_2(t))-f^L(t,y^{\bar{v}_2}(t),z^{\bar{v}_2}(t),\tilde{z}^{\bar{v}_2}(t),\bar{v}_2(t))\big\rangle\\
&\qquad+\big\langle p(t), b^L(t,y^{v_2}(t),z^{v_2}(t),\tilde{z}^{v_2}(t),v_2(t))-b^L(t,y^{\bar{v}_2}(t),z^{\bar{v}_2}(t),\tilde{z}^{\bar{v}_2}(t),\bar{v}_2(t))\big\rangle\\
&\qquad+\big\langle q_1(t),\sigma_1^L(t,y^{v_2}(t),z^{v_2}(t),\tilde{z}^{v_2}(t),v_2(t))-\sigma_1^L(t,y^{\bar{v}_2}(t),z^{\bar{v}_2}(t),\tilde{z}^{\bar{v}_2}(t),\bar{v}_2(t))\big\rangle\\
&\qquad+\big\langle q_2(t),\sigma_2^L(t,y^{v_2}(t),z^{v_2}(t),\tilde{z}^{v_2}(t),v_2(t))-\sigma_2^L(t,y^{\bar{v}_2}(t),z^{\bar{v}_2}(t),\tilde{z}^{\bar{v}_2}(t),\bar{v}_2(t))\big\rangle\Big\}dt\\
&=\mathbb{E}\int_0^T\Big\{-\big\langle H_{2x,\hat{x}}\big(t,y^{\bar{v}_2}(t),z^{\bar{v}_2}(t),\tilde{z}^{\bar{v}_2}(t),\bar{v}_2(t),\bar{x}(t),p(t),q_1(t),q_2(t),Q(t)\big),x^{v_2}(t)-x^{\bar{v}_2}(t)\big\rangle\\
&\qquad-\big\langle H_{2y,\hat{y}}\big(t,y^{\bar{v}_2}(t),z^{\bar{v}_2}(t),\tilde{z}^{\bar{v}_2}(t),\bar{v}_2(t),\bar{x}(t),p(t),q_1(t),q_2(t),Q(t)\big),x^{v_2}(t)-x^{\bar{v}_2}(t)\rangle\\
&\qquad-\big\langle H_{2z,\hat{z}}\big(t,y^{\bar{v}_2}(t),z^{\bar{v}_2}(t),\tilde{z}^{\bar{v}_2}(t),\bar{v}_2(t),\bar{x}(t),p(t),q_1(t),q_2(t),Q(t)\big),z^{v_2}(t)-z^{\bar{v}_2}(t)\rangle\\
&\qquad-\big\langle H_{2\tilde{z},\hat{\tilde{z}}}\big(t,y^{\bar{v}_2}(t),z^{\bar{v}_2}(t),\tilde{z}^{\bar{v}_2}(t),\bar{v}_2(t),\bar{x}(t),p(t),q_1(t),q_2(t),Q(t)\big),\tilde{z}^{v_2}(t)-\tilde{z}^{\bar{v}_2}(t)\rangle\\
&\qquad+H_2\big(t,y^{v_2}(t),z^{v_2}(t),\tilde{z}^{v_2}(t),v_2(t),x(t),p(t),q_1(t),q_2(t),Q(t)\big)\\
&\qquad-H_2\big(t,y^{\bar{v}_2}(t),z^{\bar{v}_2}(t),\tilde{z}^{\bar{v}_2}(t),\bar{v}_2(t),\bar{x}(t),p(t),q_1(t),q_2(t),Q(t)\big)\\
&\qquad-L_2^L(t,y^{v_2}(t),z^{v_2}(t),\tilde{z}^{v_2}(t),v_2(t))+L_2^L(t,y^{\bar{v}_2}(t),z^{\bar{v}_2}(t),\tilde{z}^{\bar{v}_2}(t),\bar{v}_2(t))\Big\}dt.
\end{aligned}
\end{equation}
Because of \eqref{H2convex}, we get
\begin{equation}
\begin{aligned}
\textrm{II}&\geq\mathbb{E}\big\langle h_{2y}(y^{\bar{v}_2}(0)),y^{v_2}(0)-y^{\bar{v}_2}(0)\big\rangle\\
&\geq\mathbb{E}\int_0^T\Big\{-L_2^L(t,y^{v_2}(t),z^{v_2}(t),\tilde{z}^{v_2}(t),v_2(t))+L_2^L(t,y^{\bar{v}_2}(t),z^{\bar{v}_2}(t),\tilde{z}^{\bar{v}_2}(t),\bar{v}_2(t))\Big\}dt.
\end{aligned}
\end{equation}
Therefore, we have
\begin{equation*}
\begin{aligned}
&\widetilde{J}_2(v_2(\cdot);\xi)-\widetilde{J}_2(\bar{v}_2(\cdot);\xi)\\
&\geq\mathbb{E}\int_0^{T}\Big\{L_2^L(t,y^{v_2}(t),z^{v_2}(t),\tilde{z}^{v_2}(t),v_2(t))-L_2^L(t,y^{\bar{v}_2}(t),z^{\bar{v}_2}(t),\tilde{z}^{\bar{v}_2}(t),\bar{v}_2(t))\Big\}dt\\
&\quad+\mathbb{E}\int_0^T\Big\{-L_2^L(t,y^{v_2}(t),z^{v_2}(t),\tilde{z}^{v_2}(t),v_2(t))+L_2^L(t,y^{\bar{v}_2}(t),z^{\bar{v}_2}(t),\tilde{z}^{\bar{v}_2}(t),\bar{v}_2(t)\Big\}dt=0.
\end{aligned}
\end{equation*}
Since $v_2(\cdot)\in\mathcal{U}_2$ is arbitrary, the desired result follows. $\Box$

\section{The Linear Quadratic Problem}

In this section, we aim to study an LQ case with $m=\widetilde{m}=1$ and to give some explicit forms of the previous results. Moreover, we only consider the special case when the follower's information filtration is $\mathcal{G}_t^1=\sigma\{W(r):0\leq r\leq t\}$, and the leader's information filtration is $\mathcal{G}_t^2=\mathcal{F}_t=\sigma\{W(r),\widetilde{W}(r):0\leq r\leq t\}$.

\subsection{Optimization for The Follower}

We consider the following controlled linear BSDE
\begin{equation}\label{LQbsde}
\left\{
\begin{aligned}
-dy^{v_1,v_2}(t)&=\big[A(t)y^{v_1,v_2}(t)+B_1(t)v_1(t)+B_2(t)v_2(t)+C_1(t)z^{v_1,v_2}(t)\\
                &\quad\ +C_2(t)\tilde{z}^{v_1,v_2}(t)\big]dt-z^{v_1,v_2}(t)dW(t)-\tilde{z}^{v_1,v_2}(t)d\widetilde{W}(t),\ t\in[0,T],\\
  y^{v_1,v_2}(T)&=\xi,
\end{aligned}
\right.
\end{equation}
and the cost functional
\begin{equation}\label{LQ cost functional}
\begin{aligned}
&J_1(v_1(\cdot),v_2(\cdot);\xi)=\frac{1}{2}\mathbb{E}\bigg\{\int_0^T\Big[\big\langle Q_1(t)y^{v_1,v_2}(t),y^{v_1,v_2}(t)\big\rangle+\big\langle R_1(t)v_1(t),v_1(t)\big\rangle\\
&\ +\big\langle S_1(t)z^{v_1,v_2}(t),z^{v_1,v_2}(t)\big\rangle+\big\langle S_2(t)\tilde{z}^{v_1,v_2}(t),\tilde{z}^{v_1,v_2}(t)\big\rangle\Big]dt+\big\langle G_1y^{v_1,v_2}(0),y^{v_1,v_2}(0)\big\rangle\bigg\},
\end{aligned}
\end{equation}
where $A(\cdot),B_1(\cdot),B_2(\cdot),C_1(\cdot),C_2(\cdot),Q_1(\cdot),R_1(\cdot),S_1(\cdot),S_2(\cdot)$ are deterministic matrix-valued functions, and $G_1$ is an $\mathbb{R}^n$-valued vector. We give the following assumptions.

{\bf(L1)}
\begin{equation*}
\left\{
\begin{aligned}
&A(\cdot),C_1(\cdot),C_2(\cdot)\in L^\infty(0,T;\mathbb{R}^{n\times n}),\\
&B_1(\cdot)\in L^\infty(0,T;\mathbb{R}^{n\times k_1}),\ B_2(\cdot)\in L^\infty(0,T;\mathbb{R}^{n\times k_2}).
\end{aligned}
\right.
\end{equation*}

{\bf(L2)}
\begin{equation*}
\left\{
\begin{aligned}
&Q_1(\cdot),S_1(\cdot),S_2(\cdot)\in L^\infty(0,T;\mathcal{S}^n),\ Q_1(\cdot),S_1(\cdot),S_2(\cdot)\geq0,\\
&R_1(\cdot)\in L^\infty(0,T;\mathcal{S}^{k_1}),\ R_1(\cdot)>0,\ G_1\in\mathcal{S}^n,\ G_1\geq0.
\end{aligned}
\right.
\end{equation*}

For the given control $v_2(\cdot)$, suppose that there exists a $\mathcal{G}_t^1$-adapted optimal control $\bar{v}_1(\cdot)$ of the follower, and the corresponding optimal state is $(y^{\bar{v}_1,v_2}(\cdot),z^{\bar{v}_1,v_2}(\cdot))$. According to Theorem 3.1, it is necessary to satisfy
\begin{equation}
\mathbb{E}\big[H_{1v_1}(t,y^{\bar{v}_1,v_2}(t),z^{\bar{v}_1,v_2}(t),\tilde{z}^{\bar{v}_1,v_2}(t),\bar{v}_1(t),v_2(t),x(t))\big|\mathcal{G}_t^1\big]=0,\ a.e.\ t\in[0,T],\ a.s.,
\end{equation}
where the Hamiltonian function of the follower is
\begin{equation}\label{Hamiltonian1}
\begin{aligned}
H_1(t,y,z,\tilde{z},v_1,v_2,x)=&-\frac{1}{2}\langle Q_1(t)y,y\rangle-\frac{1}{2}\langle R_1(t)v_1,v_1\rangle-\frac{1}{2}\langle S_1(t)z,z\rangle-\frac{1}{2}\langle S_2(t)\tilde{z},\tilde{z}\rangle\\
                             &-\langle A(t)y+B_1(t)v_1+B_2(t)v_2+C_1(t)z+C_2(t)\tilde{z},x\rangle.
\end{aligned}
\end{equation}
Then we have
\begin{equation}\label{barv1}
\bar{v}_1(t)=-R_1^{-1}(t)B_1(t)^\top\hat{x}(t),\ a.e.\ t\in[0,T],\ a.s.,
\end{equation}
where $(y^{\bar{v}_1,v_2}(\cdot),z^{\bar{v}_1,v_2}(\cdot),\tilde{z}^{\bar{v}_1,v_2}(\cdot),x(\cdot))$ satisfies
\begin{equation}\label{LQbsde2}
\left\{
\begin{aligned}
-dy^{\bar{v}_1,v_2}(t)&=\big[A(t)y^{\bar{v}_1,v_2}(t)-B_1(t)R_1^{-1}(t)B_1(t)^\top\hat{x}(t)+B_2(t)v_2(t)+C_1(t)z^{\bar{v}_1,v_2}(t)\\
                      &\quad\ +C_2(t)\tilde{z}^{\bar{v}_1,v_2}(t)\big]dt-z^{\bar{v}_1,v_2}(t)dW(t)-\tilde{z}^{\bar{v}_1,v_2}(t)d\widetilde{W}(t),\\
                 dx(t)&=\big[A(t)^\top x(t)+Q(t)y^{\bar{v}_1,v_2}(t)\big]dt+\big[C_1(t)^\top x(t)+S_1(t)z^{\bar{v}_1,v_2}(t)\big]dW(t)\\
                      &\quad\ +\big[C_2(t)^\top x(t)+S_2(t)\tilde{z}^{\bar{v}_1,v_2}(t)\big]d\widetilde{W}(t),\ t\in[0,T],\\
                  x(0)&=G_1y^{\bar{v}_1,v_2}(0),\ y^{\bar{v}_1,v_2}(T)=\xi.
\end{aligned}
\right.
\end{equation}

We then wish to obtain the state estimate feedback form of $\bar{v}_1(\cdot)$, from \eqref{barv1}. Therefore we set
\begin{equation}\label{relation1}
y^{\bar{v}_1,v_2}(t)=-P_1(t)x(t)-\phi(t),
\end{equation}
for some deterministic and differentiable $\mathbb{R}^{n\times n}$-matrix-valued function $P_1(\cdot)$ with $P_1(T)=0$, and $\mathbb{R}^n$-valued, $\mathcal{F}_t$-adapted process $\phi(\cdot)$ which admits the following BSDE:
\begin{equation}\label{phi}
\left\{
\begin{aligned}
d\phi(t)&=\Gamma(t)dt+\eta(t)dW(t),\ t\in[0,T],\\
\phi(T)&=-\xi.
\end{aligned}
\right.
\end{equation}
In the above equation, $\Gamma(\cdot),\eta(\cdot)$ are both $\mathbb{R}^n$-valued, $\mathcal{F}_t$-adapted processes, which are to be determined later. Applying It\^{o}'s formula to \eqref{relation1}, we get
\begin{equation}\label{after ito formula}
\begin{aligned}
dy^{\bar{v}_1,v_2}(t)=&\big[-\dot{P_1}(t)x(t)-P_1(t)A(t)^\top x(t)+P_1(t)Q_1(t)P_1(t)x(t)+P_1(t)Q_1(t)\phi(t)\\
                      &\ -\Gamma(t)\big]dt+\big[-P_1(t)(C_1(t)^\top x(t)+S_1(t)z^{\bar{v}_1,v_2}(t))-\eta(t)\big]dW(t)\\
                      &+\big[-P_1(t)(C_2(t)^\top x(t)+S_2(t)\tilde{z}^{\bar{v}_1,v_2}(t))\big]d\widetilde{W}(t).
\end{aligned}
\end{equation}
Comparing \eqref{after ito formula} with the first equation of \eqref{LQbsde2}, we derive
\begin{equation}\label{dt term}
\begin{aligned}
&\big[-A(t)P_1(t)-\dot{P}_1(t)-P_1(t)A(t)^\top+P_1(t)Q_1(t)P_1(t)\big]x(t)-B_1(t)R_1^{-1}(t)B_1(t)^\top\hat{x}(t)\\
&+B_2(t)v_2(t)+C_1(t)z^{\bar{v}_1,v_2}(t)+C_2(t)\tilde{z}^{\bar{v}_1,v_2}(t)+\big[P_1(t)Q_1(t)-A(t)\big]\phi(t)=\Gamma(t),
\end{aligned}
\end{equation}
and
\begin{equation}\label{z term}
\left\{
\begin{aligned}
&z^{\bar{v}_1,v_2}(t)=-(P_1(t)S_1(t)+I)^{-1}P_1(t)C_1(t)^\top x(t)-(P_1(t)S_1(t)+I)^{-1}\eta(t),\\
&\tilde{z}^{\bar{v}_1,v_2}(t)=-(P_1(t)S_2(t)+I)^{-1}P_1(t)C_2(t)^{\top}x(t).
\end{aligned}
\right.
\end{equation}
Taking $\mathbb{E}[\cdot|\mathcal{G}_t^1]$ on both sides of \eqref{relation1}, \eqref{phi}, \eqref{dt term} and \eqref{z term}, we get
\begin{equation}\label{filter y x}
\left\{
\begin{aligned}
&\hat{y}^{\bar{v}_1,\hat{v}_2}(t)=-P_1(t)\hat{x}(t)-\hat{\phi}(t),\\
&d\hat{\phi}(t)=\hat{\Gamma}(t)dt+\hat{\eta}(t)dW(t),\ \hat{\phi}(T)=-\xi,
\end{aligned}
\right.
\end{equation}
\begin{equation}\label{hatGamma}
\begin{aligned}
\hat{\Gamma}(t)&=\big[-\dot{P}_1(t)-A(t)P_1(t)-P_1(t)A(t)^{\top}+P_1(t)Q_1(t)P_1(t)-B_1(t)R_1^{-1}(t)B_1(t)^\top\big]\hat{x}(t)\\
               &\quad+B_2(t)\hat{v}_2(t)+C_1(t)\hat{z}^{\bar{v}_1,\hat{v}_2}(t)+C_2(t)\hat{\tilde{z}}^{\bar{v}_1,\hat{v}_2}(t)+\big[P_1(t)Q_1(t)-A(t)\big]\hat{\phi}(t),
\end{aligned}
\end{equation}
and
\begin{equation}\label{hatz and hatbarz}
\left\{
\begin{aligned}
      \hat{z}^{\bar{v}_1,\hat{v}_2}(t)&=-(P_1(t)S_1(t)+I)^{-1}P_1(t)C_1(t)^\top\hat{x}(t)-(P_1(t)S_1(t)+I)^{-1}\hat{\eta}(t),\\
\hat{\tilde{z}}^{\bar{v}_1,\hat{v}_2}(t)&=-(P_1(t)S_2(t)+I)^{-1}P_1(t)C_2(t)^\top\hat{x}(t).
\end{aligned}
\right.
\end{equation}
Inserting \eqref{hatz and hatbarz} into \eqref{hatGamma}, we get
\begin{equation}\label{hatGamma2}
\begin{aligned}
\hat{\Gamma}(t)&=\big[-\dot{P}_1(t)-A(t)P_1(t)-P_1(t)A(t)^\top+P_1(t)Q_1(t)P_1(t)-B_1(t)R_1^{-1}(t)B_1(t)^\top\\
               &\qquad-C_1(t)(P_1(t)S_1(t)+I)^{-1}P_1(t)C_1(t)^\top-C_2(t)(P_1(t)S_2(t)+I)^{-1}P_1(t)C_2(t)^\top\big]\hat{x}(t)\\
               &\quad+B_2(t)\hat{v}_2(t)-C_1(t)(P_1(t)S_1(t)+I)^{-1}\hat{\eta}(t)+\big[P_1(t)Q_1(t)-A(t)\big]\hat{\phi}(t).
\end{aligned}
\end{equation}
If the following Riccati equation
\begin{equation}\label{P_1}
\left\{
\begin{aligned}
&\dot{P}_1(t)+A(t)P_1(t)+P_1(t)A(t)^\top-P_1(t)Q_1(t)P_1(t)+B_1(t)R_1^{-1}(t)B_1(t)^\top\\
&+C_1(t)(P_1(t)S_1(t)+I)^{-1}P_1(t)C_1(t)^\top+C_2(t)(P_1(t)S_2(t)+I)^{-1}P_1(t)C_2(t)^\top=0,\ t\in[0,T],\\
&P_1(T)=0,
\end{aligned}
\right.
\end{equation}
admits a unique differentiable solution $P_1(\cdot)$, then
\begin{equation}\label{hatphi}
\left\{
\begin{aligned}
-d\hat{\phi}(t)&=\big\{-[P_1(t)Q_1(t)-A(t)]\hat{\phi}(t)+C_1(t)(P_1(t)S_1(t)+I)^{-1}\hat{\eta}(t)\\
               &\qquad-B_2(t)\hat{v}_2(t)\big\}dt-\hat{\eta}(t)dW(t),\ t\in[0,T],\\
  \hat{\phi}(T)&=-\hat{\xi}.
\end{aligned}
\right.
\end{equation}

Next, we set
\begin{equation}\label{relation2}
x(t)=P_2(t)y^{\bar{v}_1,v_2}(t)+\varphi(t),
\end{equation}
where $P_2(\cdot)$ is a deterministic and differentiable $\mathbb{R}^{n\times n}$-matrix-valued function with $P_2(0)=G_1$, and $\varphi(\cdot)$ is an $\mathbb{R}^n$-valued, $\mathcal{F}_t$-adapted process which satisfies the following SDE:
\begin{equation}\label{varphi}
\left\{
\begin{aligned}
d\varphi(t)&=\alpha(t)dt+\beta(t)dW(t),\ t\in[0,T],\\
 \varphi(0)&=0,
\end{aligned}
\right.
\end{equation}
where $\alpha(\cdot),\beta(\cdot)$ are both $\mathbb{R}^n$-valued, $\mathcal{F}_t$-adapted processes which are to be determined later.

Applying It\^{o}'s formula to \eqref{relation2}, we get
\begin{equation}\label{after ito formula2}
\begin{aligned}
dx(t)&=\big[\dot{P}_2(t)y^{\bar{v}_1,v_2}(t)-P_2(t)A(t)y^{\bar{v}_1,v_2}(t)+P_2(t)B_1(t)R_1^{-1}(t)B_1(t)^{\top}\hat{x}(t)\\
     &\qquad-P_2(t)B_2(t)v_2(t)-P_2(t)C_1(t)z^{\bar{v}_1,v_2}(t)-P_2(t)C_2(t)\tilde{z}^{\bar{v}_1,v_2}(t)+\alpha(t\big]dt\\
     &\quad+\big[P_2(t)z^{\bar{v}_1,v_2}(t)+\beta(t)\big]dW(t)+P_2(t)\tilde{z}^{\bar{v}_1,v_2}(t)d\widetilde{W}(t).
\end{aligned}
\end{equation}
Comparing \eqref{after ito formula2} with the second equation of \eqref{LQbsde2}, we have
\begin{equation}\label{after comparision}
\left\{
\begin{aligned}
&\alpha(t)=\big[-\dot{P}_2(t)+P_2(t)A(t)+Q_1(t)\big]y^{\bar{v}_1,v_2}(t)+A(t)^\top x(t)+P_2(t)C_1(t)z^{\bar{v}_1,v_2}(t)\\
&\qquad\quad+P_2(t)C_2(t)\tilde{z}^{\bar{v}_1,v_2}(t)-P_2(t)B_1(t)R_1^{-1}(t)B_1(t)^\top\hat{x}(t)+P_2(t)B_2(t)v_2(t),\\
&C_1(t)^\top x(t)+S_1(t)z^{\bar{v}_1,v_2}(t)=P_2(t)z^{\bar{v}_1,v_2}(t)+\beta(t),\\
&C_2(t)^\top x(t)+S_2(t)\tilde{z}^{\bar{v}_1,v_2}(t)=P_2(t)\tilde{z}^{\bar{v}_1,v_2}(t).
\end{aligned}
\right.
\end{equation}
Taking $\mathbb{E}[\cdot|\mathcal{G}_t^1]$ on both sides of \eqref{relation2}, \eqref{varphi} and \eqref{after comparision}, we have
\begin{equation}
\left\{
\begin{aligned}
&\hat{x}(t)=P_2(t)\hat{y}^{\bar{v}_1,\hat{v}_2}(t)+\hat{\varphi}(t),\\
&d\hat{\varphi}(t)=\hat{\alpha}(t)dt+\hat{\beta}(t)dW(t),\ \hat{\varphi}(0)=0,
\end{aligned}
\right.
\end{equation}
\begin{equation}
\begin{aligned}
\hat{\alpha}(t)&=\big[-\dot{P}_2(t)+P_2(t)A(t)+A(t)^\top P_2(t)+Q_1(t)-P_2(t)B_1(t)R_1^{-1}(t)B_1(t)^\top P_2(t)\\
               &\qquad-P_2(t)C_1(t)(P_1(t)S_1(t)+I)^{-1}P_1(t)C_1(t)^\top P_2(t)\\
               &\qquad-P_2(t)C_2(t)(P_1(t)S_2(t)+I)^{-1}P_1(t)C_2(t)^\top P_2(t)\big]\hat{y}^{\bar{v}_1,\hat{v}_2}(t)\\
               &\quad+\big[A(t)^\top-P_2(t)B_1(t)R_1^{-1}(t)B_1(t)^\top-P_2(t)C_1(t)(P_1(t)S_1(t)+I)^{-1}P_1(t)C_1(t)^\top\\
               &\qquad-P_2(t)C_2(t)(P_1(t)S_2(t)+I)^{-1}P_1(t)C_2(t)^\top\big]\hat{\varphi}(t)\\
               &\qquad-P_2(t)C_1(t)(P_1(t)S_1(t)+I)^{-1}\hat{\eta}(t)+P_2(t)B_2(t)\hat{v}_2(t),\\
 \hat{\beta}(t)&=\big[C_1(t)^\top-(S_1(t)-P_2(t))(P_1(t)S_1(t)+I)^{-1}P_1(t)C_1(t)^\top\big]\hat{x}(t)\\
               &\quad-(S_1(t)-P_2(t))(P_1(t)S_1(t)+I)^{-1}\hat{\eta}(t).
\end{aligned}
\end{equation}
If the following Riccati equation
\begin{equation}\label{P2}
\left\{
\begin{aligned}
&\dot{P}_2(t)-A(t)^{\top}P_2(t)-P_2(t)A(t)+P_2(t)B_1(t)R_1^{-1}(t)B_1(t)^\top P_2(t)\\
&+P_2(t)C_1(t)(P_1(t)S_1(t)+I)^{-1}P_1(t)C_1(t)^\top P_2(t)\\
&+P_2(t)C_2(t)(P_1(t)S_2(t)+I)^{-1}P_1(t)C_2(t)^\top P_2(t)-Q_1(t)=0,\ t\in[0,T],\\
&P_2(0)=G_1,
\end{aligned}
\right.
\end{equation}
admits a unique differentiable solution $P_2(\cdot)$, then
\begin{equation}\label{hatvarphi}
\begin{aligned}
d\hat{\varphi}(t)&=\big\{\big[A(t)^\top-P_2(t)B_1(t)R_1^{-1}(t)B_1(t)^\top-P_2(t)C_1(t)(P_1(t)S_1(t)+I)^{-1}P_1(t)C_1(t)^\top\\
                 &\qquad-P_2(t)C_2(t)(P_1(t)S_2(t)+I)^{-1}P_1(t)C_2(t)^\top]\hat{\varphi}(t)\\
                 &\qquad-P_2(t)C_1(t)(P_1(t)S_1(t)+I)^{-1}\hat{\eta}(t)+P_2(t)B_2(t)\hat{v}_2(t)\big\}dt\\
                 &\quad+\big\{\big[C_1(t)^\top-(S_1(t)-P_2(t))(P_1(t)S_1(t)+I)^{-1}P_1(t)C_1(t)^\top\big]\hat{x}(t)\\
                 &\qquad-(S_1(t)-P_2(t))(P_1(t)S_1(t)+I)^{-1}\hat{\eta}(t)\big\}dW(t).
\end{aligned}
\end{equation}

From the above relationship between $\hat{x}(t)$ and $\hat{y}^{\bar{v}_1,\hat{v}_2}$:
\begin{equation}\label{tworelation}
\left\{
\begin{aligned}
&\hat{y}^{\bar{v}_1,\hat{v}_2}(t)=-P_1(t)\hat{x}(t)-\hat{\phi}(t),\\
&\hat{x}(t)=P_2(t)\hat{y}^{\bar{v}_1,\hat{v}_2}(t)+\hat{\varphi}(t),
\end{aligned}
\right.
\end{equation}
we get
\begin{equation}\label{hatx}
\hat{x}(t)=-(I+P_2(t)P_1(t))^{-1}P_2(t)\hat{\phi}(t)+(I+P_2(t)P_1(t))^{-1}\hat{\varphi}(t).
\end{equation}
Putting \eqref{hatx} into \eqref{hatvarphi}, after dealing with the $dW(t)$ term and using the fact that
\begin{equation*}
(I+AB)^{-1}A=A(I+BA)^{-1},\mbox{ for any }A, B\in\mathcal{S}^n,
\end{equation*}
we derive
\begin{equation}\label{hatvarphi2}
\left\{
\begin{aligned}
d\hat{\varphi}(t)&=\big\{\big[A(t)^\top-P_2(t)B_1(t)R_1^{-1}(t)B_1(t)^\top-P_2(t)C_1(t)(P_1(t)S_1(t)+I)^{-1}\\
                 &\qquad\times P_1(t)C_1(t)^\top-P_2(t)C_2(t)(P_1(t)S_2(t)+I)^{-1}P_1(t)C_2(t)^\top\big]\hat{\varphi}(t)\\
                 &\qquad-P_2(t)C_1(t)(P_1(t)S_1(t)+I)^{-1}\hat{\eta}(t)+P_2(t)B_2(t)\hat{v}_2(t)\big\}dt\\
                 &\quad+\big\{-(I+P_2(t)P_1(t))(S_1(t)P_1(t)+I)^{-1}C_1(t)^{\top}(I+P_2(t)P_1(t))^{-1}P_2(t)\hat{\phi}(t)\\
                 &\qquad+(I+P_2(t)P_1(t))(S_1(t)P_1(t)+I)^{-1}C_1(t)^{\top}(I+P_2(t)P_1(t))^{-1}\hat{\varphi}(t)\\
                 &\qquad-(S_1(t)-P_2(t))(P_1(t)S_1(t)+I)^{-1}\hat{\eta}(t)\big\}dW(t),\ t\in[0,T],\\
\hat{\varphi}(0)&=0.
\end{aligned}
\right.
\end{equation}

\begin{Remark}
{\rm We introduce two Riccati equations for $P_1(\cdot)$ and $P_2(\cdot)$ to build the relation between $y^{\bar{v}_1,v_2}(\cdot)$ and $x(\cdot)$. Similarly to Lim and Zhou \cite{LZ01}, we can obtain the unique solvability of these two Riccati equations. And the unique solvability of \eqref{hatphi} and \eqref{hatvarphi2}, with the solutions $(\hat{\phi}(\cdot),\hat{\eta}(\cdot))$ and $\hat{\varphi}(\cdot)$ respectively, is evident as they can be regarded as two kinds of linear BSDE and SDE with bounded deterministic coefficients and square integrable nonhomogeneous terms.}\
\end{Remark}

We have the following result.
\begin{theorem}
Under the assumptions {\bf(L1)} and {\bf(L2)}, for any given $\xi\in L_{\mathcal{F}_T}^2(\Omega;R^n)$ and $v_2(\cdot)\in\mathcal{U}_2[0,T]$, the follower's problem is solvable with the optimal strategy $\bar{v}_1(\cdot)$ being of a state estimate feedback representation
\begin{equation}\label{barv_1}
\bar{v}_1(t)=-R_1^{-1}(t)B_1(t)^\top\big[P_2(t)\hat{y}^{\bar{v}_1,\hat{v}_2}(t)+\hat{\varphi}(t)\big],\ a.e.\ t\in[0,T],\ a.s.,
\end{equation}
where $P_2(\cdot)$ and $\hat{\varphi}(\cdot)$ are the solutions to \eqref{P2} and \eqref{hatvarphi2}, respectively. The optimal state trajectory $(\hat{y}^{\bar{v}_1,\hat{v}_2}(\cdot),\hat{z}^{\bar{v}_1,\hat{v}_2}(\cdot),\hat{\tilde{z}}^{\bar{v}_1,\hat{v}_2}(\cdot))$ is the unique solution to the following forward-backward stochastic differential filtering equation (FBSDFE):
\begin{equation}\label{fbsde3}
\left\{
\begin{aligned}
-d\hat{y}^{\bar{v}_1,\hat{v}_2}(t)&=\big[A(t)\hat{y}^{\bar{v}_1,\hat{v}_2}(t)-B_1(t)R_1^{-1}(t)B_1(t)^{\top}P_2(t)\hat{y}^{\bar{v}_1,\hat{v}_2}(t)\\
                                  &\qquad-B_1(t)R_1^{-1}(t)B_1(t)^{\top}\hat{\varphi}(t)+B_2(t)\hat{v}_2(t)\\
                                  &\qquad+C_1(t)\hat{z}^{\bar{v}_1,\hat{v}_2}(t)+C_2(t)\hat{\tilde{z}}^{\bar{v}_1,\hat{v}_2}(t)\big]dt-\hat{z}^{\bar{v}_1,\hat{v}_2}(t)dW(t),\\
                       d\hat{x}(t)&=\big[A(t)^{\top}\hat{x}(t)+Q_1(t)\hat{y}^{\bar{v}_1,\hat{v}_2}(t)\big]dt\\
                                  &\quad+\big[C_1(t)^{\top}\hat{x}(t)+S_1(t)\hat{z}^{\bar{v}_1,\hat{v}_2}(t)\big]dW(t),\ t\in[0,T],\\
                        \hat{x}(0)&=G_1\hat{y}^{\bar{v}_1,\hat{v}_2}(0),\ \hat{y}^{\bar{v}_1,\hat{v}_2}(T)=\hat{\xi}.
\end{aligned}
\right.
\end{equation}
\end{theorem}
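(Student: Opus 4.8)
The plan is to run the maximum-principle machinery of Section 3 in the explicit LQ setting and then repackage the adjoint process into state-estimate feedback form using the two decoupling ans\"atze already derived. First I would apply the partial information maximum principle (Theorem 3.1). Differentiating the explicit Hamiltonian \eqref{Hamiltonian1} gives $H_{1v_1}=-R_1(t)v_1-B_1(t)^\top x$, so the first-order condition of Theorem 3.1 reduces, in the unconstrained case, to $\mathbb{E}[H_{1v_1}|\mathcal{G}_t^1]=0$; together with $R_1(t)>0$ from (L2) and the $\mathcal{G}_t^1$-measurability of $\bar{v}_1$, this forces $\bar{v}_1(t)=-R_1^{-1}(t)B_1(t)^\top\hat{x}(t)$, where $\hat{x}(t)=\mathbb{E}[x(t)|\mathcal{G}_t^1]$ and $x(\cdot)$ solves the adjoint equation in \eqref{LQbsde2}. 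This singles out the unique candidate \eqref{barv1}.

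Next I would verify that this candidate is genuinely optimal by checking the hypotheses of the verification theorem (Theorem 3.2). From \eqref{Hamiltonian1} the Hessian of $H_1$ in $(y,z,\tilde{z},v_1)$ is block-diagonal with blocks $-Q_1(t),-S_1(t),-S_2(t),-R_1(t)$, which are all negative semidefinite under (L2); hence $H_1(t,\cdot,\cdot,\cdot,\cdot,v_2(t),x(t))$ is concave, while $h_1(y)=\frac{1}{2}\langle G_1 y,y\rangle$ with $G_1\geq0$ is convex. Since the stationarity condition already holds and $H_{1v_1}$ is continuous in $v_1$, the conditional maximization in \eqref{maximum condition2} is attained at $\bar{v}_1$, so Theorem 3.2 yields optimality and hence solvability of the follower's problem. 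To pass to the feedback form \eqref{barv_1}, I would invoke the second ansatz \eqref{relation2}, namely $x(t)=P_2(t)y^{\bar{v}_1,v_2}(t)+\varphi(t)$; taking $\mathbb{E}[\cdot|\mathcal{G}_t^1]$ gives $\hat{x}(t)=P_2(t)\hat{y}^{\bar{v}_1,\hat{v}_2}(t)+\hat{\varphi}(t)$, and substitution into \eqref{barv1} produces \eqref{barv_1}. The filtered system \eqref{fbsde3} is then obtained by applying $\mathbb{E}[\cdot|\mathcal{G}_t^1]$ to \eqref{LQbsde2}: because $\mathcal{G}_t^1=\sigma\{W(r):r\leq t\}$ and $\widetilde{W}$ is independent of it, the $d\widetilde{W}$ martingale integrals vanish under conditioning, and inserting the feedback control closes the forward-backward pair.

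The main obstacle is the rigorous well-posedness behind this formal scheme. The derivation presupposes the unique solvability of the Riccati equations \eqref{P_1} and \eqref{P2}, the invertibility of $P_1(t)S_1(t)+I$, $P_1(t)S_2(t)+I$ and $I+P_2(t)P_1(t)$ used in \eqref{z term}, \eqref{hatx} and \eqref{hatvarphi2}, and the solvability of the linear filtering equations \eqref{hatphi} and \eqref{hatvarphi2}. I would establish the Riccati solvability by the completion-of-squares argument of Lim and Zhou \cite{LZ01}, as indicated in Remark 4.1; the equations \eqref{hatphi} and \eqref{hatvarphi2} are then standard linear BSDE and SDE with bounded deterministic coefficients and square-integrable inhomogeneities, hence uniquely solvable. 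With these in hand, the relations \eqref{filter y x} and \eqref{hatx} give a well-defined, invertible correspondence between $(\hat{\phi},\hat{\eta},\hat{\varphi})$ and $(\hat{y}^{\bar{v}_1,\hat{v}_2},\hat{z}^{\bar{v}_1,\hat{v}_2},\hat{x})$, which delivers existence and uniqueness for the FBSDFE \eqref{fbsde3} and completes the proof.
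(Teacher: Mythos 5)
Your proposal is correct and follows essentially the same route as the paper: the maximum principle (Theorem 3.1) gives the candidate $\bar{v}_1(t)=-R_1^{-1}(t)B_1(t)^\top\hat{x}(t)$, the two decoupling ans\"atze \eqref{relation1} and \eqref{relation2} with the Riccati equations \eqref{P_1}, \eqref{P2} and the filtering equations \eqref{hatphi}, \eqref{hatvarphi2} are used to express $\hat{x}$ in feedback form, and conditioning on $\mathcal{G}_t^1$ yields \eqref{barv_1} and the FBSDFE \eqref{fbsde3}. Your explicit appeal to the verification theorem (Theorem 3.2), checking concavity of $H_1$ via the blocks $-Q_1,-S_1,-S_2,-R_1$ and convexity of $h_1$, is a sound addition that the paper leaves implicit, since its own proof derives the control only from the necessary condition and the construction.
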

\begin{proof}
For given $\xi$ and $v_2(\cdot)$, let $P_1(\cdot)$ satisfy \eqref{P_1}, by the standard BSDE theory we can solve \eqref{hatphi} to obtain $(\hat{\phi}(\cdot),\hat{\eta}(\cdot))$. Let $P_2(\cdot)$ satisfy \eqref{P2}, and by the standard SDE theory we can solve \eqref{hatvarphi2} to obatin $\hat{\varphi}(\cdot)$. Then according to the \eqref{hatz and hatbarz}, \eqref{hatx} and \eqref{filter y x}, we can obtain $(\hat{y}^{\bar{v}_1,\hat{v}_2}(\cdot),\hat{z}^{\bar{v}_1,\hat{v}_2}(\cdot),\hat{\tilde{z}}^{\bar{v}_1,\hat{v}_2}(\cdot))$, which are the $\mathcal{G}^1_t$-adapted solution to \eqref{fbsde3}. Therefore, the state estimate feedback representation \eqref{barv_1} can be obtained. The proof is complete.
\end{proof}

\subsection{Optimization for The Leader}

By \eqref{LQbsde} and \eqref{barv_1}, we have
\begin{equation}\label{ybarv1}
\left\{
\begin{aligned}
-dy^{\bar{v}_1,v_2}(t)&=\big[A(t)y^{\bar{v}_1,v_2}(t)-B_1(t)R_1^{-1}B_1(t)^\top P_2(t)\hat{y}^{\bar{v}_1,\hat{v}_2}(t)-B_1(t)R_1^{-1}B_1(t)^\top\hat{\varphi}(t)\\
                      &\qquad+B_2(t)v_2(t)+C_1(t)z^{\bar{v}_1,v_2}(t)+C_2(t)\tilde{z}^{\bar{v}_1,v_2}(t)\big]dt\\
                      &\quad-z^{\bar{v}_1,v_2}(t)dW(t)-\tilde{z}^{\bar{v}_1,v_2}(t)d\widetilde{W}(t),\ t\in[0,T],\\
  y^{\bar{v}_1,v_2}(T)&=\xi.
\end{aligned}
\right.
\end{equation}
From the relationship \eqref{tworelation} and \eqref{hatz and hatbarz}, we get
\begin{equation}\label{hatphi1 and hateta1}
\left\{
\begin{aligned}
\hat{\phi}(t)&=-(P_1(t)P_2(t)+I)\hat{y}^{\bar{v}_1,\hat{v}_2}(t)-P_1(t)\hat{\varphi}(t),\\
\hat{\eta}(t)&=-P_1(t)C_1(t)^\top P_2(t)\hat{y}^{\bar{v}_1,\hat{v}_2}(t)-P_1(t)C_1(t)^\top\hat{\varphi}(t)-(P_1(t)S_1(t)+I)\hat{z}^{\bar{v}_1,\hat{v}_2}(t).
\end{aligned}
\right.
\end{equation}
Substituting \eqref{hatphi1 and hateta1} into \eqref{hatvarphi2}, we get
\begin{equation}\label{hatvarphi3}
\left\{
\begin{aligned}
d\hat{\varphi}(t)&=\big\{\big[A(t)^\top-P_2(t)B_1(t)R_1^{-1}B_1(t)^\top-P_2(t)C_2(t)(P_1(t)S_2(t)+I)^{-1}P_1(t)\\
                 &\qquad\times C_2(t)^\top\big]\hat{\varphi}(t)+P_2(t)C_1(t)(P_1(t)S_1(t)+I)^{-1}P_1(t)C_1(t)^\top P_2(t)\hat{y}^{\bar{v}_1,\hat{v}_2}(t)\\
                 &\qquad+P_2(t)C_1(t)\hat{z}^{\bar{v}_1,\hat{v}_2}(t)+P_2(t)B_2(t)\hat{v}_2(t)\big\}dt+\big[C_1(t)^{\top}P_2(t)\hat{y}^{\bar{v}_1,\hat{v}_2}(t)\\
                 &\qquad+C_1(t)^{\top}\hat{\varphi}(t)+(S_1(t)-P_2(t))\hat{z}^{\bar{v}_1,\hat{v}_2}(t)\big]dW(t),\ t\in[0,T],\\
 \hat{\varphi}(0)&=0.
\end{aligned}
\right.
\end{equation}
Combining \eqref{ybarv1} and \eqref{hatvarphi3}, we can get the state equation of the leader:
\begin{equation}\label{leader state eq}
\left\{
\begin{aligned}
     d\hat{\varphi}(t)&=\big\{\big[A(t)^\top-P_2(t)B_1(t)R_1^{-1}B_1(t)^\top-P_2(t)C_2(t)(P_1(t)S_2(t)+I)^{-1}P_1(t)\\
                      &\qquad\times C_2(t)^\top\big]\hat{\varphi}(t)+P_2(t)C_1(t)(P_1(t)S_1(t)+I)^{-1}P_1(t)C_1(t)^\top P_2(t)\hat{y}^{\bar{v}_1,\hat{v}_2}(t)\\
                      &\qquad+P_2(t)C_1(t)\hat{z}^{\bar{v}_1,\hat{v}_2}(t)+P_2(t)B_2(t)\hat{v}_2(t)\big\}dt+\big[C_1(t)^{\top}P_2(t)\hat{y}^{\bar{v}_1,\hat{v}_2}(t)\\
                      &\qquad+C_1(t)^\top\hat{\varphi}(t)+(S_1(t)-P_2(t))\hat{z}^{\bar{v}_1,\hat{v}_2}(t)\big]dW(t),\\
-dy^{\bar{v}_1,v_2}(t)&=\big[A(t)y^{\bar{v}_1,v_2}(t)-B_1(t)R_1^{-1}B_1(t)^\top P_2(t)\hat{y}^{\bar{v}_1,\hat{v}_2}(t)-B_1(t)R_1^{-1}B_1(t)^\top\hat{\varphi}(t)\\
                      &\qquad+B_2(t)v_2(t)+C_1(t)z^{\bar{v}_1,v_2}(t)+C_2(t)\tilde{z}^{\bar{v}_1,v_2}(t)\big]dt\\
                      &\quad-z^{\bar{v}_1,v_2}(t)dW(t)-\tilde{z}^{\bar{v}_1,v_2}(t)d\widetilde{W}(t),\ t\in[0,T],\\
  y^{\bar{v}_1,v_2}(T)&=\xi,\ \hat{\varphi}(0)=0.
\end{aligned}
\right.
\end{equation}
For any given $\xi$ and $v_2(\cdot)$, from the proof above, the solvability for the solution $(y^{\bar{v}_1,v_2}(\cdot),z^{\bar{v}_1,v_2}(\cdot),\\\tilde{z}^{\bar{v}_1,v_2}(\cdot),\hat{\varphi}(\cdot))$ to \eqref{leader state eq} can be guaranteed though it is fully coupled.

The leader would like to choose an $\mathcal{F}_t$-adapted optimal control $\bar{v}_2(\cdot)$ such that the cost functional
\begin{equation}\label{LQ cost functional2}
\begin{aligned}
&J_2(\bar{v}_1(\cdot),v_2(\cdot);\xi)=\frac{1}{2}\mathbb{E}\bigg\{\int_0^T\Big[\langle Q_2(t)y^{\bar{v}_1,v_2}(t),y^{\bar{v}_1,v_2}(t)\rangle+\langle R_2(t)v_2(t),v_2(t)\rangle\\
&\ +\langle N_1(t)z^{\bar{v}_1,v_2}(t),z^{\bar{v}_1,v_2}(t)\rangle+\langle N_2(t)\tilde{z}^{\bar{v}_1,v_2}(t),\tilde{z}^{\bar{v}_1,v_2}(t)\rangle\Big]dt+\langle G_2(t)y^{\bar{v}_1,v_2}(0),y^{\bar{v}_1,v_2}(0)\rangle\bigg\}
\end{aligned}
\end{equation}
is minimized. We suppose the following holds.
\begin{equation}
{\bf(L3)}\left\{
\begin{aligned}
&Q_2(\cdot), N_1(\cdot),N_2(\cdot)\in L^{\infty}(0,T;\mathcal{S}^n),\ Q_2(\cdot), N_1(\cdot),N_2(\cdot)\geq0,\\
&R_2(\cdot)\in L^{\infty}(0,T;\mathcal{S}^{k_2}),\ R_2(\cdot)>0,\ G_2\in\mathcal{S}^n,G_2\geq0.
\end{aligned}
\right.
\end{equation}
Define the Hamiltonian function of the leader as
\begin{equation}\label{Hamiltonian2}
\begin{aligned}
&H_2(t,y^{\bar{v}_1,v_2},z^{\bar{v}_1,v_2},\tilde{z}^{\bar{v}_1,v_2},v_2,\hat{\varphi},p,q_1,q_2,Q)\\
&=\big\langle\big[A(t)^\top-P_2(t)B_1(t)R_1^{-1}B_1(t)^\top-P_2(t)C_2(t)(P_1(t)S_2(t)+I)^{-1}P_1(t)C_2(t)^\top\big]\hat{\varphi}\\
&\qquad+P_2(t)C_1(t)(P_1(t)S_1(t)+I)^{-1}P_1(t)C_1(t)^\top P_2(t)\hat{y}^{\bar{v}_1,\hat{v}_2}+P_2(t)C_1(t)\hat{z}^{\bar{v}_1,\hat{v}_2}\\
&\qquad+P_2(t)B_2(t)\hat{v}_2,p(t)\big\rangle+\big\langle C_1(t)^\top P_2(t)\hat{y}^{\bar{v}_1,\hat{v}_2}+C_1(t)^\top\hat{\varphi}+(S_1(t)-P_2(t))\hat{z}^{\bar{v}_1,\hat{v}_2},q_1\big\rangle\\
&\quad+\big\langle A(t)y^{\bar{v}_1,v_2}-B_1(t)R_1^{-1}B_1(t)^\top P_2(t)\hat{y}^{\bar{v}_1,\hat{v}_2}-B_1(t)R_1^{-1}B_1(t)^\top\hat{\varphi}+B_2(t)v_2\\
&\qquad+C_1(t)z^{\bar{v}_1,v_2}+C_2(t)\tilde{z}^{\bar{v}_1,v_2},Q\big\rangle+\frac{1}{2}\Big[\big\langle Q_2(t)y^{\bar{v}_1,v_2},y^{\bar{v}_1,v_2}\big\rangle+\big\langle R_2(t)v_2,v_2\big\rangle\\
&\qquad+\big\langle N_1(t)z^{\bar{v}_1,v_2},z^{\bar{v}_1,v_2}\big\rangle+\big\langle N_2(t)\tilde{z}^{\bar{v}_1,v_2},\tilde{z}^{\bar{v}_1,v_2}\big\rangle\Big].
\end{aligned}
\end{equation}
Noting that the variable $q_2$ does not appear explicitly. Suppose that there exists an $\mathcal{F}_t$-adapted optimal control $\bar{v}_2(\cdot)$ of the leader, and the corresponding optimal state trajectory is $(y^{\bar{v}_1,\bar{v}_2}(\cdot),z^{\bar{v}_1,\bar{v}_2}(\cdot),\tilde{z}^{\bar{v}_1,\bar{v}_2}(\cdot),\hat{\bar{\varphi}}(\cdot))\equiv(y^{\bar{v}_2}(\cdot),z^{\bar{v}_2}(\cdot),\tilde{z}^{\bar{v}_2}(\cdot),
\hat{\bar{\varphi}}(\cdot))$, then by Theorem 3.3, we obtain
\begin{equation}\label{barv_2-initial}
B_2(t)^{\top}Q(t)+R_2(t)\bar{v}_2(t)+B_2(t)^{\top}P_2(t)\hat{p}(t)=0,\ a.e.\ t\in[0,T],\ a.s.,
\end{equation}
where the $\mathcal{F}_t$-adapted process triple $(p(\cdot),q_1(\cdot),Q(\cdot))$ satisfies
\begin{equation}\label{leader adjoint eq}
\left\{
\begin{aligned}
 dQ(t)&=\big\{\big[P_2(t)C_1(t)(P_1(t)S_1(t)+I)^{-1}P_1(t)C_1(t)^{\top}P_2(t)\big]^\top\hat{p}(t)+P_2(t)C_1(t)\hat{q}_1(t)\\
      &\qquad+A(t)^\top Q(t)-P_2(t)B_1(t)R_1^{-1}(t)B_1(t)^\top\hat{Q}(t)+Q_2(t)^\top y^{\bar{v}_2}(t)\big\}dt\\
      &\quad+\big[C_1(t)^\top P_2(t)\hat{p}(t)+(S_1(t)-P_2(t))^\top\hat{q}_1(t)+C_1(t)^\top Q(t)\\
      &\qquad+N_1(t)z^{\bar{v}_1,\bar{v}_2}(t)\big]dW(t)+\big[C_2(t)^\top Q(t)+N_2(t)\tilde{z}^{\bar{v}_1,\bar{v}_2}(t)\big]d\widetilde{W}(t),\\
-dp(t)&=\big\{\big[A(t)^\top-P_2(t)B_1(t)R_1^{-1}(t)B_1(t)^\top-P_2(t)C_2(t)(P_1(t)S_2(t)+I)^{-1}\\
      &\qquad\times P_1(t)C_2(t)^\top\big]^\top p(t)+C_1(t)q_1(t)-B_1(t)R_1^{-1}(t)B_1(t)^\top Q(t)\big\}dt\\
      &\quad-q_1(t)dW(t),\ t\in[0,T],\\
  Q(0)&=G_2(t)y^{\bar{v}_2}(0)+G_1p(0),\ p(T)=0.
\end{aligned}
\right.
\end{equation}

Then, to make the problem clear, let us put \eqref{leader state eq}, \eqref{leader adjoint eq} and \eqref{barv_2-initial} together:
\begin{equation}\label{fbsdes}
\left\{
\begin{aligned}
d\hat{\bar{\varphi}}(t)&=\big\{\big[A(t)^\top-P_2(t)B_1(t)R_1^{-1}B_1(t)^\top-P_2(t)C_2(t)(P_1(t)S_2(t)+I)^{-1}P_1(t)\\
                       &\qquad\times C_2(t)^\top\big]\hat{\bar{\varphi}}(t)+P_2(t)C_1(t)(P_1(t)S_1(t)+I)^{-1}P_1(t)C_1(t)^\top P_2(t)\hat{y}^{\hat{\bar{v}}_2}(t)\\
                       &\qquad+P_2(t)C_1(t)\hat{z}^{\hat{\bar{v}}_2}(t)+P_2(t)B_2(t)\hat{\bar{v}}_2(t)\big\}dt\\
                       &\quad+\big[C_1(t)^{\top}P_2(t)\hat{y}^{\hat{\bar{v}}_2}(t)+C_1(t)^\top\hat{\bar{\varphi}}(t)+(S_1(t)-P_2(t))\hat{z}^{\hat{\bar{v}}_2}(t)\big]dW(t),\\
                  dQ(t)&=\big\{\big[P_2(t)C_1(t)(P_1(t)S_1(t)+I)^{-1}P_1(t)C_1(t)^{\top}P_2(t)\big]^\top\hat{p}(t)+P_2(t)C_1(t)\hat{q}_1(t)\\
                       &\qquad+A(t)^\top Q(t)-P_2(t)B_1(t)R_1^{-1}(t)B_1(t)^\top\hat{Q}(t)+Q_2(t)^\top y^{\bar{v}_2}(t)\big\}dt\\
                       &\quad+\big[C_1(t)^\top P_2(t)\hat{p}(t)+(S_1(t)-P_2(t))^\top\hat{q}_1(t)+C_1(t)^\top Q(t)\\
                       &\qquad+N_1(t)z^{\bar{v}_2}(t)\big]dW(t)+\big[C_2(t)^\top Q(t)+N_2(t)\tilde{z}^{\bar{v}_2}(t)\big]d\widetilde{W}(t),\\
                 -dp(t)&=\big\{\big[A(t)^\top-P_2(t)B_1(t)R_1^{-1}(t)B_1(t)^\top-P_2(t)C_2(t)(P_1(t)S_2(t)+I)^{-1}\\
                       &\qquad\times P_1(t)C_2(t)^\top]^\top p(t)+C_1(t)q_1(t)-B_1(t)R_1^{-1}(t)B_1(t)^\top Q(t)\big\}dt\\
                       &\quad-q_1(t)dW(t),\\
     -dy^{\bar{v}_2}(t)&=\big[A(t)y^{\bar{v}_2}(t)-B_1(t)R_1^{-1}B_1(t)^\top P_2(t)\hat{y}^{\hat{\bar{v}}_2}(t)-B_1(t)R_1^{-1}B_1(t)^\top\hat{\bar{\varphi}}(t)\\
                       &\qquad+B_2(t)\bar{v}_2(t)+C_1(t)z^{\bar{v}_2}(t)+C_2(t)\tilde{z}^{\bar{v}_2}(t)\big]dt\\
                       &\quad-z^{\bar{v}_2}(t)dW(t)-\tilde{z}^{\bar{v}_2}(t)d\widetilde{W}(t),\ t\in[0,T],\\
 \hat{\bar{\varphi}}(0)&=0,\ Q(0)=G_1p(0)+G_2y^{\bar{v}_2}(0),\ p(T)=0,\ y^{\bar{v}_2}(T)=\xi,\\
                       &\hspace{-1.3cm}B_2(t)^\top Q(t)+R_2(t)\bar{v}_2(t)+B_2(t)^\top P_2(t)\hat{p}(t)=0,\ a.e.\ t\in[0,T],\ a.s.
\end{aligned}
\right.
\end{equation}
We may look at the above equations in a different way. To this end, let us set (The time variable $t$ is omitted.)
\begin{equation}\label{notation}
X=
\begin{pmatrix}
\hat{\bar{\varphi}}\\
Q
\end{pmatrix},\ \
Y=
\begin{pmatrix}
p\\
y^{\bar{v}_2}
\end{pmatrix},\ \
Z=
\begin{pmatrix}
q_1\\
z^{\bar{v}_2}
\end{pmatrix},\ \
\widetilde{Z}=
\begin{pmatrix}
0\\
\tilde{z}^{\bar{v}_2}
\end{pmatrix},\ \
\end{equation}
and
\begin{equation*}
\left\{
\begin{aligned}
&\mathcal{A}_1=
\begin{pmatrix}
A^{\top}-P_2B_1R_1^{-1}B_1^\top-P_2C_2(P_1S_2+I)^{-1}P_1C_2^\top&0\\
0&A^\top
\end{pmatrix},\ \
\widetilde{\mathcal{A}}_1=
\begin{pmatrix}
C_1^\top&0\\
0&C_1^\top
\end{pmatrix},\\
&\mathcal{A}_2=
\begin{pmatrix}
0&0\\
0&-P_2B_1R_1^{-1}B_1^\top
\end{pmatrix},\ \
\widetilde{\mathcal{A}}_2=
\begin{pmatrix}
0&0\\
0&C_2^\top
\end{pmatrix},\ \
\mathcal{B}_1=
\begin{pmatrix}
0&0\\
0&Q_2
\end{pmatrix},\ \
\widetilde{\mathcal{B}}_2=
\begin{pmatrix}
0\\
B_2
\end{pmatrix},\\
&\mathcal{B}_2=
\begin{pmatrix}
0&P_2C_1(P_1S_1+I)^{-1}P_1C_1^\top P_2\\
\big[P_2C_1(P_1S_1+I)^{-1}P_1C_1^\top P_2\big]^\top&0
\end{pmatrix},\ \
\widetilde{\mathcal{C}}_1=
\begin{pmatrix}
0&0\\
0&N_1
\end{pmatrix},\\
&\widetilde{\mathcal{C}}_2=
\begin{pmatrix}
0&S_1-P_2\\
(S_1-P_2)^\top&0
\end{pmatrix},\ \
\widetilde{\mathcal{C}}_3=
\begin{pmatrix}
0&0\\
0&N_2
\end{pmatrix},\ \
\mathcal{F}_1=
\begin{pmatrix}
0&-B_1R_1^{-1}B_1^\top\\
-B_1R_1^{-1}B_1^\top&0
\end{pmatrix},\\
&\mathcal{C}=
\begin{pmatrix}
0&P_2C_1\\
P_2C_1&0
\end{pmatrix},\ \
\mathcal{D}=
\begin{pmatrix}
P_2B_2\\
0
\end{pmatrix},\ \
\bar{\xi}=
\begin{pmatrix}
0\\
\xi
\end{pmatrix},\ \
\bar{G}=
\begin{pmatrix}
0&0\\
G_1&G_2
\end{pmatrix}.
\end{aligned}
\right.
\end{equation*}
With this, \eqref{fbsdes} is equivalent to the FBSDE:
\begin{equation}\label{2fbsdes}
\left\{
\begin{aligned}
 dX(t)&=\big[\mathcal{A}_1X(t)+\mathcal{A}_2\hat{X}(t)+\mathcal{B}_1Y(t)+\mathcal{B}_2\hat{Y}(t)+\mathcal{C}\hat{Z}(t)+\mathcal{D}\hat{\bar{v}}_2(t)\big]dt\\
      &\quad+\big[\widetilde{\mathcal{A}}_1X(t)+\mathcal{C}^\top\hat{Y}(t)+\widetilde{\mathcal{C}}_1Z(t)+\widetilde{\mathcal{C}}_2\hat{Z}(t)\big]dW(t)
       +\big[\widetilde{\mathcal{A}}_2X(t)+\widetilde{\mathcal{C}}_3\widetilde{Z}(t)\big]d\widetilde{W}(t),\\
-dY(t)&=\big[\mathcal{F}_1X(t)+\mathcal{A}_1^\top Y(t)+\mathcal{A}_2^\top\hat{Y}(t)+\widetilde{\mathcal{A}}_1^\top Z(t)+\widetilde{\mathcal{A}}_2^\top\widetilde{Z}(t)+\widetilde{\mathcal{B}}_2\bar{v}_2(t)\big]dt\\
      &\quad-Z(t)dW(t)-\widetilde{Z}(t)d\widetilde{W}(t),\ t\in[0,T],\\
  X(0)&=\bar{G}Y(0),\ Y(T)=\bar{\xi},\\
\widetilde{\mathcal{B}}_2^\top X(t)&+R_2(t)\bar{v}_2(t)+\mathcal{D}^\top\hat{Y}(t)=0,\ a.e.\ t\in[0,T],\ a.s.
\end{aligned}
\right.
\end{equation}
Then we have
\begin{equation}\label{barv_2}
\bar{v}_2(t)=-R_2^{-1}(t)\big[\widetilde{\mathcal{B}}_2^\top X(t)+\mathcal{D}^\top\hat{Y}(t)\big],\ a.e.\ t\in[0,T],\ a.s.
\end{equation}

Taking $\mathbb{E}[\cdot|\mathcal{G}_t^1]$ on \eqref{barv_2}, we get
\begin{equation}\label{hatbarv_2}
\hat{\bar{v}}_2(t)=-R_2^{-1}(t)\big[\widetilde{\mathcal{B}}_2^\top \hat{X}(t)+\mathcal{D}^\top\hat{Y}(t)\big],\ a.e.\ t\in[0,T],\ a.s.
\end{equation}
Putting \eqref{barv_2} and \eqref{hatbarv_2} into \eqref{2fbsdes}, we get
\begin{equation}\label{2fbsdes2}
\left\{
\begin{aligned}
 dX(t)&=\big[\mathcal{A}_1X(t)+(\mathcal{A}_2-\mathcal{D}R_2^{-1}(t)\widetilde{\mathcal{B}}_2^\top)\hat{X}(t)+\mathcal{B}_1Y(t)+(\mathcal{B}_2-\mathcal{D}R_2^{-1}(t)\mathcal{D}^\top)\hat{Y}(t)\\
      &\qquad+\mathcal{C}\hat{Z}(t)\big]dt+\big[\widetilde{\mathcal{A}}_1X(t)+\mathcal{C}^{\top}\hat{Y}(t)+\widetilde{\mathcal{C}}_1Z(t)+\widetilde{\mathcal{C}}_2\hat{Z}(t)\big]dW(t)\\
      &\quad+\big[\widetilde{\mathcal{A}}_2X(t)+\widetilde{\mathcal{C}}_3\widetilde{Z}(t)\big]d\widetilde{W}(t),\\
-dY(t)&=\big[(\mathcal{F}_1-\widetilde{\mathcal{B}}_2R_2^{-1}(t)\widetilde{\mathcal{B}}_2^\top)X(t)+\mathcal{A}_1^\top Y(t)+(\mathcal{A}_2^\top-\widetilde{\mathcal{B}}_2R_2^{-1}(t)\mathcal{D}^\top)\hat{Y}(t)\\
      &\qquad+\widetilde{\mathcal{A}}_1^\top Z(t)+\widetilde{\mathcal{A}}_2^\top\widetilde{Z}(t)\big]dt-Z(t)dW(t)-\widetilde{Z}(t)d\widetilde{W}(t),\ t\in[0,T],\\
  X(0)&=\bar{G}Y(0),\ Y(T)=\bar{\xi}.
\end{aligned}
\right.
\end{equation}
Noting that \eqref{2fbsdes2} is a coupled FBSDE, we need to decouple it with the similar method before.

We set
\begin{equation}\label{Relation1}
Y(t)=\Pi_1(t)X(t)+\Pi_2(t)\hat{X}(t)+\widetilde{\phi}(t),
\end{equation}
where $\Pi_1(\cdot),\Pi_2(\cdot)$ are $\mathbb{R}^{2n\times 2n}$-matrix-valued deterministic, differentiable functions with $\Pi_1(T)=\Pi_2(T)=0$, and $(\widetilde{\phi}(\cdot),\widetilde{\eta}(\cdot))$ are both $\mathbb{R}^{2n}$-valued, $\mathcal{F}_t$-adapted processes which satisfy the following BSDE:
\begin{equation}\label{widephi}
\left\{
\begin{aligned}
-d\widetilde{\phi}(t)&=\widetilde{\alpha}(t)dt-\widetilde{\eta}(t)dW(t),\ t\in[0,T],\\
  \widetilde{\phi}(T)&=\bar{\xi},
\end{aligned}
\right.
\end{equation}
where $\widetilde{\alpha}(\cdot)$ is an $\mathbb{R}^{2n}$-valued, $\mathcal{F}_t$-adapted process which is to be determined later.

Applying Lemma 5.4 in Xiong \cite{Xiong08} to the forward equation in \eqref{2fbsdes2}, we get
\begin{equation}\label{hatX}
\begin{aligned}
d\hat{X}(t)&=\big[\big(\mathcal{A}_1+\mathcal{A}_2-\mathcal{D}R_2^{-1}(t)\widetilde{\mathcal{B}}_2^\top\big)\hat{X}(t)+\big(\mathcal{B}_1+\mathcal{B}_2-\mathcal{D}R_2^{-1}(t)\mathcal{D}^\top\big)\hat{Y}(t)+\mathcal{C}\hat{Z}(t)\big]dt\\
           &\quad+\big[\widetilde{\mathcal{A}}_1\hat{X}(t)+\mathcal{C}^\top\hat{Y}(t)+(\widetilde{\mathcal{C}}_1+\widetilde{\mathcal{C}}_2)\hat{Z}(t)\big]dW(t).
\end{aligned}
\end{equation}
Applying It\^{o}'s formula to \eqref{Relation1}, noting \eqref{hatX}, we have
\begin{equation}\label{afterito}
\begin{aligned}
dY(t)&=\big[(\dot{\Pi}_1(t)+\Pi_1(t)\mathcal{A}_1)X(t)+\Pi_1(t)(\mathcal{A}_2-\mathcal{D}R_2^{-1}(t)\widetilde{\mathcal{B}}_2^\top)\hat{X}(t)+\Pi_1(t)\mathcal{B}_1Y(t)\\
     &\qquad+\Pi_1(t)(\mathcal{B}_2-\mathcal{D}R_2^{-1}(t)\mathcal{D}^\top)\hat{Y}(t)+\Pi_1(t)\mathcal{C}\hat{Z}(t)+\dot{\Pi}_2(t)\hat{X}(t)\\
     &\qquad+\Pi_2(t)(\mathcal{A}_1+\mathcal{A}_2-\mathcal{D}R_2^{-1}(t)\widetilde{\mathcal{B}}_2^\top)\hat{X}(t)+\Pi_2(t)(\mathcal{B}_1+\mathcal{B}_2-\mathcal{D}R_2^{-1}(t)\mathcal{D}^\top)\hat{Y}(t)\\
     &\qquad+\Pi_2(t)\mathcal{C}_2\hat{Z}(t)-\widetilde{\alpha}(t)\big]dt+\big[\Pi_1(t)\widetilde{\mathcal{A}}_1X(t)+\Pi_1(t)\mathcal{C}^{\top}\hat{Y}(t)+\Pi_1(t)\widetilde{\mathcal{C}}_1Z(t)\\
     &\qquad+\Pi_1(t)\widetilde{\mathcal{C}}_2\hat{Z}(t)+\Pi_2(t)\widetilde{\mathcal{A}}_1\hat{X}(t)+\Pi_2(t)\mathcal{C}^\top\hat{Y}(t)+\Pi_2(t)(\widetilde{\mathcal{C}}_1+\widetilde{\mathcal{C}}_2)\hat{Z}(t)\\
     &\qquad+\widetilde{\eta}(t)\big]dW(t)+\big[\Pi_1(t)\widetilde{\mathcal{A}}_2X(t)+\Pi_1(t)\widetilde{\mathcal{C}}_3\widetilde{Z}(t)\big]d\widetilde{W}(t)\\
     &=-\big[(\mathcal{F}_1-\widetilde{\mathcal{B}}_2R_2^{-1}(t)\widetilde{\mathcal{B}}_2^\top)X(t)+\mathcal{A}_1^\top Y(t)+(\mathcal{A}_2^\top-\widetilde{\mathcal{B}}_2R_2^{-1}(t)\mathcal{D}^\top)\hat{Y}(t)+\widetilde{\mathcal{A}}_1^\top Z(t)\\
     &\qquad+\widetilde{\mathcal{A}}_2^\top\widetilde{Z}(t)\big]dt+Z(t)dW(t)+\widetilde{Z}(t)d\widetilde{W}(t).
\end{aligned}
\end{equation}
Comparing the diffusion coefficients of $dW(\cdot)$ term and $d\widetilde{W}(\cdot)$ term on both sides of \eqref{afterito}, we derive
\begin{equation}\label{Z}
\begin{aligned}
Z(t)&=\Pi_1(t)\widetilde{\mathcal{A}}_1X(t)+\Pi_1(t)\mathcal{C}^{\top}\hat{Y}(t)+\Pi_1(t)\widetilde{\mathcal{C}}_1Z(t)+\Pi_1(t)\widetilde{\mathcal{C}}_2\hat{Z}(t)\\
    &\quad+\Pi_2(t)\widetilde{\mathcal{A}}_1\hat{X}(t)+\Pi_2(t)\mathcal{C}^{\top}\hat{Y}(t)+\Pi_2(t)(\widetilde{\mathcal{C}}_1+\widetilde{\mathcal{C}}_2)\hat{Z}(t)+\widetilde{\eta}(t),
\end{aligned}
\end{equation}
and
\begin{equation}\label{widetildeZ}
\begin{aligned}
\widetilde{Z}(t)=\Pi_1(t)\widetilde{\mathcal{A}}_2X(t)+\Pi_1(t)\widetilde{\mathcal{C}}_3\widetilde{Z}(t).
\end{aligned}
\end{equation}
Then comparing the drift coefficients of the $dt$ term on both sides of \eqref{afterito}, we have
\begin{equation}\label{dtterm}
\begin{aligned}
&(\dot{\Pi}_1(t)+\Pi_1(t)\mathcal{A}_1)X(t)+\Pi_1(t)(\mathcal{A}_2-\mathcal{D}R_2^{-1}(t)\widetilde{\mathcal{B}}_2^\top)\hat{X}(t)+\Pi_1(t)\mathcal{B}_1Y(t)\\
&+\Pi_1(t)(\mathcal{B}_2-\mathcal{D}R_2^{-1}(t)\mathcal{D}^\top)\hat{Y}(t)+\Pi_1(t)\mathcal{C}\hat{Z}(t)+\dot{\Pi}_2(t)\hat{X}(t)\\
&+\Pi_2(t)(\mathcal{A}_1+\mathcal{A}_2-\mathcal{D}R_2^{-1}(t)\widetilde{\mathcal{B}}_2^\top)\hat{X}(t)+\Pi_2(t)(\mathcal{B}_1+\mathcal{B}_2-\mathcal{D}R_2^{-1}\mathcal{D}^\top)\hat{Y}(t)\\
&+\Pi_2(t)\mathcal{C}\hat{Z}(t)+(\mathcal{F}_1-\widetilde{\mathcal{B}}_2R_2^{-1}(t)\widetilde{\mathcal{B}}_2^\top)X(t)+\mathcal{A}_1^\top Y(t)\\
&+(\mathcal{A}_2^\top-\widetilde{\mathcal{B}}_2R_2^{-1}(t)\mathcal{D}^\top)\hat{Y}(t)+\widetilde{\mathcal{A}}_1^\top Z(t)+\widetilde{\mathcal{A}}_2^\top\widetilde{Z}(t)=\widetilde{\alpha}(t).
\end{aligned}
\end{equation}
Taking $\mathbb{E}[\cdot|\mathcal{G}_t^1]$ on \eqref{Z}, we get
\begin{equation}\label{hatZ}
\hat{Z}(t)=\Sigma_1(\Pi_1(t)+\Pi_2(t))\widetilde{\mathcal{A}}_1\hat{X}(t)+\Sigma_1(\Pi_1(t)+\Pi_2(t))\mathcal{C}^\top\hat{Y}(t)+\Sigma_1\hat{\widetilde{\eta}}(t),
\end{equation}
where (The time variable $t$ is omitted.)
\begin{equation*}
\Sigma_1=[I-(\Pi_1(t)+\Pi_2(t))(\widetilde{\mathcal{C}}_1+\widetilde{\mathcal{C}}_2)]^{-1}.
\end{equation*}
Putting \eqref{hatZ} into \eqref{Z}, we get
\begin{equation}\label{Z2}
\begin{aligned}
Z(t)&=\Sigma_4\Pi_1(t)\widetilde{\mathcal{A}}_1X(t)+\big[\Sigma_4\Sigma_2+\Sigma_4\Sigma_3\Sigma_1(\Pi_1(t)+\Pi_2(t))\widetilde{\mathcal{A}}_1+\Sigma_4\Pi_2(t)\widetilde{\mathcal{A}}_1\\
    &\quad+\Sigma_4\Sigma_3\Sigma_1\Sigma_2\big]\hat{X}(t)+\big[\Sigma_4\Sigma_3\Sigma_1(\Pi_1(t)+\Pi_2(t))\mathcal{C}^\top+\Sigma_4(\Pi_1(t)+\Pi_2(t))\mathcal{C}^\top\big]\hat{\widetilde{\phi}}(t)\\
    &\quad+\Sigma_4\Sigma_3\Sigma_1\hat{\widetilde{\eta}}(t)+\Sigma_4\widetilde{\eta}(t),
\end{aligned}
\end{equation}
where
\begin{equation*}
\left\{
\begin{aligned}
&\Sigma_2=(\Pi_1(t)+\Pi_2(t))\mathcal{C}^\top(\Pi_1(t)+\Pi_2(t)),\ \Sigma_3=(\Pi_1(t)+\Pi_2(t))\widetilde{\mathcal{C}}_2+\Pi_2(t)\widetilde{\mathcal{C}}_1,\\
&\Sigma_4=(I-\Pi_1(t)\widetilde{\mathcal{C}}_1)^{-1}.
\end{aligned}
\right.
\end{equation*}
From \eqref{widetildeZ}, we can get
\begin{equation}\label{widetildeZ2}
\widetilde{Z}(t)=(I-\Pi_1(t)\widetilde{\mathcal{C}}_3)^{-1}\Pi_1(t)\widetilde{\mathcal{A}}_2X(t).
\end{equation}
Putting \eqref{hatZ}, \eqref{Z2} and \eqref{widetildeZ2} into \eqref{dtterm}, we drive
\begin{equation}
\begin{aligned}
\widetilde{\alpha}(t)=&\big[\dot{\Pi}_1(t)+\Pi_1(t)\mathcal{A}_1+\Pi_1(t)\mathcal{B}_1\Pi_1(t)+\Sigma_9+\mathcal{A}_1^\top\Pi_1(t)+\widetilde{\mathcal{A}}_1^\top\Sigma_4\Pi_1(t)\widetilde{\mathcal{A}}_1\\
&\quad+\widetilde{\mathcal{A}}_2^\top\Sigma_{10}\Pi_1(t)\widetilde{\mathcal{A}}_2\big]X(t)+\Big\{\Pi_1(t)\Sigma_5+\Pi_1(t)\mathcal{B}_1\Pi_2(t)+\Pi_1(t)\Sigma_6(\Pi_1(t)+\Pi_2(t))\\
&\quad+(\Pi_1(t)+\Pi_2(t))\mathcal{C}\Sigma_1(\Pi_1(t)+\Pi_2(t))\widetilde{\mathcal{A}}_1+(\Pi_1(t)+\Pi_2(t))\mathcal{C}\Sigma_1\Sigma_2\\
&\quad+\dot{\Pi}_2(t)+\Pi_2(t)\Sigma_7+\Pi_2\Sigma_8(\Pi_1(t)+\Pi_2(t))+\mathcal{A}_1^\top\Pi_2(t)+\Sigma_5^{\top}(\Pi_1(t)+\Pi_2(t))\\
&\quad+\widetilde{\mathcal{A}}_1^\top\big[\Sigma_4\Sigma_2+\Sigma_4\Sigma_3\Sigma_1(\Pi_1(t)+\Pi_2(t))\widetilde{\mathcal{A}}_1+\Sigma_4\Pi_2(t)\widetilde{\mathcal{A}}_1+\Sigma_4\Sigma_3\Sigma_1\Sigma_2]\Big\}\hat{X}(t)\\
&+\Big\{\Pi_1(t)\Sigma_6+(\Pi_1(t)+\Pi_2(t))\mathcal{C}\Sigma_1(\Pi_1(t)+\Pi_2(t))\mathcal{C}^\top+\Pi_2(t)\Sigma_8+\Sigma_5^\top\\
&\quad+\widetilde{\mathcal{A}}_1^\top\big[\Sigma_4\Sigma_3\Sigma_1(\Pi_1(t)+\Pi_2(t))\mathcal{C}^\top+\Sigma_4(\Pi_1(t)+\Pi_2(t))\mathcal{C}^\top\big]\Big\}\hat{\widetilde{\phi}}(t)\\
&+\big[(\Pi_1(t)+\Pi_2(t))\mathcal{C}\Sigma_1+\widetilde{\mathcal{A}}_1^\top\Sigma_4\Sigma_3\Sigma_1\big]\hat{\widetilde{\eta}}(t)
 +\big[\Pi_1(t)\mathcal{B}_1+\mathcal{A}_1^\top\big]\widetilde{\phi}(t)+\widetilde{\mathcal{A}}_1^\top\Sigma_4\widetilde{\eta}(t),
\end{aligned}
\end{equation}
where
\begin{equation*}
\left\{
\begin{aligned}
&\Sigma_5=\mathcal{A}_2-\mathcal{D}R_2^{-1}(t)\widetilde{\mathcal{B}}_2^\top,\quad\Sigma_6=\mathcal{B}_2-\mathcal{D}R_2^{-1}(t)\mathcal{D}^\top,\quad
\Sigma_7=\mathcal{A}_1+\mathcal{A}_2-\mathcal{D}R_2^{-1}(t)\widetilde{\mathcal{B}}_2^\top,\\
&\Sigma_8=\mathcal{B}_1+\mathcal{B}_2-\mathcal{D}R_2^{-1}(t)\mathcal{D}^\top,\quad\Sigma_9=\mathcal{F}_1-\widetilde{\mathcal{B}}_2R_2^{-1}(t)\widetilde{\mathcal{B}}_2^\top,\quad
\Sigma_{10}=(I-\Pi_1(t)\widetilde{\mathcal{C}}_3)^{-1}.
\end{aligned}
\right.
\end{equation*}
Then, if $\Pi_1(\cdot)$ and $\Pi_2(\cdot)$ satisfy the following two Riccati equations, one by one:
\begin{equation}\label{Pi1}
\left\{
\begin{aligned}
&\dot{\Pi}_1(t)+\Pi_1(t)\mathcal{A}_1+\mathcal{A}_1^\top\Pi_1(t)+\Pi_1(t)\mathcal{B}_1\Pi_1(t)+\widetilde{\mathcal{A}}_1^\top\Sigma_4\Pi_1(t)\widetilde{\mathcal{A}}_1\\
&\ +\widetilde{\mathcal{A}}_2^\top\Sigma_{10}\Pi_1(t)\widetilde{\mathcal{A}}_2+\Sigma_9=0,\ t\in[0,T],\ \Pi_1(T)=0,
\end{aligned}
\right.
\end{equation}
and
\begin{equation}\label{Pi2}
\left\{
\begin{aligned}
&\dot{\Pi}_2(t)+(\Pi_1(t)+\Pi_2(t))\Sigma_5+\Sigma_5^\top(\Pi_1(t)+\Pi_2(t))+\Pi_2(t)\mathcal{A}_1+\mathcal{A}_1^\top\Pi_2(t)\\
&+\Pi_1(t)\mathcal{B}_1\Pi_2(t)+\Pi_2(t)\mathcal{B}_1\Pi_1(t)+(\Pi_1(t)+\Pi_2(t))\Sigma_6(\Pi_1(t)+\Pi_2(t))+\Pi_2(t)\mathcal{B}_1\Pi_2(t)\\
&+(\Pi_1(t)+\Pi_2(t))\mathcal{C}\Sigma_1(\Pi_1(t)+\Pi_2(t))\widetilde{\mathcal{A}}_1+(\Pi_1(t)+\Pi_2(t))\mathcal{C}\Sigma_1\Sigma_2+\widetilde{\mathcal{A}}_1^\top\Sigma_4\Sigma_2\\
&+\widetilde{\mathcal{A}}_1^\top\Sigma_4\Sigma_3\Sigma_1(\Pi_1(t)+\Pi_2(t))\widetilde{\mathcal{A}}_1
+\widetilde{\mathcal{A}}_1^\top\Sigma_4\Pi_2(t)\widetilde{\mathcal{A}}_1+\widetilde{\mathcal{A}}_1^\top\Sigma_4\Sigma_3\Sigma_1\Sigma_2=0,\ t\in[0,T],\\
&\Pi_2(T)=0,
\end{aligned}
\right.
\end{equation}
we have
\begin{equation}\label{widetildealpha}
\begin{aligned}
\widetilde{\alpha}(t)=&\big[\Pi_1(t)\Sigma_6+(\Pi_1(t)+\Pi_2(t))\mathcal{C}\Sigma_1(\Pi_1(t)+\Pi_2(t))\mathcal{C}^\top+\Pi_2(t)\Sigma_8+\Sigma_5^\top\\
&\quad+\widetilde{\mathcal{A}}_1^\top\Sigma_4\Sigma_3\Sigma_1(\Pi_1(t)+\Pi_2(t))\mathcal{C}^\top+\widetilde{\mathcal{A}}_1^\top\Sigma_4(\Pi_1(t)+\Pi_2(t))\mathcal{C}^\top\big]\hat{\widetilde{\phi}}(t)\\
&+\big[(\Pi_1(t)+\Pi_2(t))\mathcal{C}\Sigma_1+\widetilde{\mathcal{A}}_1^\top\Sigma_4\Sigma_3\Sigma_1\big]\hat{\widetilde{\eta}}(t)
 +\big[\Pi_1(t)\mathcal{B}_1+\mathcal{A}_1^\top\big]\widetilde{\phi}(t)+\widetilde{\mathcal{A}}_1^\top\Sigma_4\widetilde{\eta}(t).
\end{aligned}
\end{equation}
Taking $\mathbb{E}[\cdot|\mathcal{G}_t^1]$ on both sides of \eqref{widetildealpha}, then we get
\begin{equation}\label{hatwidetildealpha}
\begin{aligned}
\hat{\widetilde{\alpha}}(t)=&\big[\Pi_1(t)\Sigma_6+(\Pi_1(t)+\Pi_2(t))\mathcal{C}\Sigma_1(\Pi_1(t)+\Pi_2(t))\mathcal{C}^\top+\Pi_2(t)\Sigma_8+\Sigma_5^\top\\
                            &\ +\widetilde{\mathcal{A}}_1^\top\Sigma_4\Sigma_3\Sigma_1(\Pi_1(t)+\Pi_2(t))\mathcal{C}^\top+\widetilde{\mathcal{A}}_1^\top\Sigma_4(\Pi_1(t)+\Pi_2(t))\mathcal{C}^\top
                             +\Pi_1(t)\mathcal{B}_1\\
                            &\ +\mathcal{A}_1^\top\big]\hat{\widetilde{\phi}}(t)+\big[(\Pi_1(t)+\Pi_2(t))\mathcal{C}\Sigma_1+\widetilde{\mathcal{A}}_1^\top\Sigma_4\Sigma_3\Sigma_1
                             +\widetilde{\mathcal{A}}_1^\top\Sigma_4\big]\hat{\widetilde{\eta}}(t).
\end{aligned}
\end{equation}
After taking $\mathbb{E}[\cdot|\mathcal{G}_t^1]$ on \eqref{widephi}, noting \eqref{hatwidetildealpha}, we can derive the equation of $(\hat{\widetilde{\phi}}(\cdot),\hat{\widetilde{\eta}}(\cdot))$:
\begin{equation}\label{hatwidetildephi}
\left\{
\begin{aligned}
-d\hat{\widetilde{\phi}}(t)&=\Big\{\big[\Pi_1(t)\Sigma_6+(\Pi_1(t)+\Pi_2(t))\mathcal{C}\Sigma_1(\Pi_1(t)+\Pi_2(t))\mathcal{C}^\top+\Pi_2(t)\Sigma_8+\Sigma_5^\top\\
                           &\qquad+\widetilde{\mathcal{A}}_1^\top\Sigma_4\Sigma_3\Sigma_1(\Pi_1(t)+\Pi_2(t))\mathcal{C}^\top
                            +\widetilde{\mathcal{A}}_1^\top\Sigma_4(\Pi_1(t)+\Pi_2(t))\mathcal{C}^\top+\Pi_1(t)\mathcal{B}_1\\
                           &\qquad+\mathcal{A}_1^\top\big]\hat{\widetilde{\phi}}(t)+\big[(\Pi_1(t)+\Pi_2(t))\mathcal{C}\Sigma_1+\widetilde{\mathcal{A}}_1^\top\Sigma_4\Sigma_3\Sigma_1
                             +\widetilde{\mathcal{A}}_1^\top\Sigma_4\big]\hat{\widetilde{\eta}}(t)\Big\}dt\\
                           &\quad-\hat{\widetilde{\eta}}(t)dW(t),\ t\in[0,T],\\
  \hat{\widetilde{\phi}}(T)&=\hat{\bar{\xi}}.
\end{aligned}
\right.
\end{equation}

In the meanwhile, we set
\begin{equation}\label{Relation2}
X(t)=\Pi_3(t)Y(t)+\Pi_4(t)\hat{Y}(t)+\widetilde{\varphi}(t),
\end{equation}
where $\Pi_3(\cdot),\Pi_4(\cdot)$ are $\mathbb{R}^{2n\times 2n}$-matrix-valued deterministic, differentiable functions with $\Pi_3(0)=\bar{G}$, $\Pi_4(0)=0$, and $\widetilde{\varphi}(\cdot)$ is an $\mathbb{R}^{2n}$-valued, $\mathcal{F}_t$-adapted process which satisfies the following SDE:
\begin{equation}\label{widevarphi}
\left\{
\begin{aligned}
 d\widetilde{\varphi}(t)&=\widetilde{\beta}(t)dt+\widetilde{\gamma}(t)dW(t),\ t\in[0,T],\\
  \widetilde{\varphi}(0)&=0,
\end{aligned}
\right.
\end{equation}
where $\widetilde{\beta}(\cdot),\widetilde{\gamma}(\cdot)$ are both $\mathbb{R}^{2n}$-valued, $\mathcal{F}_t$-adapted processes which are to be determined later.

Using the above notations, we can reformulate the equation of $Y(\cdot)$ in \eqref{2fbsdes2} as
\begin{equation}
\left\{
\begin{aligned}
-dY(t)&=\big[\Sigma_9X(t)+\mathcal{A}_1^{\top}Y(t)+\Sigma_5^\top\hat{Y}(t)+\widetilde{\mathcal{A}}_1^\top Z(t)+\widetilde{\mathcal{A}}_2^\top\widetilde{Z}(t)\big]dt\\
      &\quad-Z(t)dW(t)-\widetilde{Z}(t)d\widetilde{W}(t),\ t\in[0,T],\\
  Y(T)&=\bar{\xi}.
\end{aligned}
\right.
\end{equation}
By applying Lemma 5.4 in \cite{Xiong08}, we get
\begin{equation}
\left\{
\begin{aligned}
-d\hat{Y}(t)&=\big[\Sigma_9\hat{X}(t)+\Sigma_7^\top\hat{Y}(t)+\widetilde{\mathcal{A}}_1^\top\hat{Z}(t)+\widetilde{\mathcal{A}}_2^\top\hat{\widetilde{Z}}(t)\big]dt-\hat{Z}(t)dW(t),\ t\in[0,T],\\
  \hat{Y}(T)&=\hat{\bar{\xi}}.
\end{aligned}
\right.
\end{equation}
Applying It\^{o}'s formula to \eqref{Relation2}, we have
\begin{equation}\label{afterito2}
\begin{aligned}
dX(t)=&\big[\dot{\Pi}_3(t)Y(t)-\Pi_3(t)\Sigma_9X(t)-\Pi_3(t)\mathcal{A}_1^\top Y(t)-\Pi_3(t)\Sigma_5^\top\hat{Y}(t)-\Pi_3(t)\widetilde{\mathcal{A}}_1^\top Z(t)\\
      &\ -\Pi_3(t)\widetilde{\mathcal{A}}_2^\top\widetilde{Z}(t)+\dot{\Pi}_4(t)\hat{Y}(t)-\Pi_4(t)\Sigma_9\hat{X}(t)-\Pi_4(t)\Sigma_7^\top\hat{Y}(t)-\Pi_4(t)\widetilde{\mathcal{A}}_1^\top\hat{Z}(t)\\
      &\ -\Pi_4(t)\widetilde{\mathcal{A}}_2^\top\hat{\widetilde{Z}}(t)+\widetilde{\beta}(t)\big]dt+\big[\Pi_3(t)Z(t)+\Pi_4(t)\hat{Z}(t)+\widetilde{\gamma}(t)\big]dW(t)\\
      &\ +\Pi_3(t)\widetilde{Z}(t)d\widetilde{W}(t)=\big[\mathcal{A}_1X(t)+\Sigma_5\hat{X}(t)+\mathcal{B}_1Y(t)+\Sigma_6\hat{Y}(t)+\mathcal{C}\hat{Z}(t)\big]dt\\
      &+\big[\widetilde{\mathcal{A}}_1X(t)+\mathcal{C}^\top\hat{Y}(t)+\widetilde{\mathcal{C}}_1Z(t)+\widetilde{\mathcal{C}}_2\hat{Z}(t)\big]dW(t)
      +\big[\widetilde{\mathcal{A}}_2X(t)+\widetilde{\mathcal{C}}_3\widetilde{Z}(t)\big]d\widetilde{W}(t).
\end{aligned}
\end{equation}
Then comparing the diffusion terms of $dW(\cdot)$ and $d\widetilde{W}(\cdot)$ on both sides of \eqref{afterito2}, we obtain
\begin{equation}\label{comparedW}
\left\{
\begin{aligned}
&\Pi_3(t)Z(t)+\Pi_4(t)\hat{Z}(t)+\widetilde{\gamma}(t)=\widetilde{\mathcal{A}}_1X(t)+\mathcal{C}^\top\hat{Y}(t)+\widetilde{\mathcal{C}}_1Z(t)+\widetilde{\mathcal{C}}_2\hat{Z}(t),\\
&\Pi_3(t)\widetilde{Z}(t)=\widetilde{\mathcal{A}}_2X(t)+\widetilde{\mathcal{C}}_3\widetilde{Z}(t).
\end{aligned}
\right.
\end{equation}
Comparing the drift term of $dt$ on both sides of \eqref{afterito2}, we derive
\begin{equation}\label{driftdt}
\begin{aligned}
&\dot{\Pi}_3(t)Y(t)-\Pi_3(t)\Sigma_9X(t)-\Pi_3(t)\mathcal{A}_1^\top Y(t)-\Pi_3(t)\Sigma_5^\top\hat{Y}(t)-\Pi_3(t)\widetilde{\mathcal{A}}_1^\top Z(t)\\
&-\Pi_3(t)\widetilde{\mathcal{A}}_2^\top\widetilde{Z}(t)+\dot{\Pi}_4(t)\hat{Y}(t)-\Pi_4(t)\Sigma_9\hat{X}(t)-\Pi_4(t)\Sigma_7^\top\hat{Y}(t)-\Pi_4(t)\widetilde{\mathcal{A}}_1^\top\hat{Z}(t)\\
&-\Pi_4(t)\widetilde{\mathcal{A}}_2^\top\hat{\widetilde{Z}}(t)+\widetilde{\beta}(t)-\mathcal{A}_1X(t)-\Sigma_5\hat{X}(t)-\mathcal{B}_1Y(t)-\Sigma_6\hat{Y}(t)-\mathcal{C}\hat{Z}(t)=0.
\end{aligned}
\end{equation}
Putting \eqref{hatZ} and \eqref{Z2} into \eqref{comparedW}, we get
\begin{equation}\label{widetildegamma}
\begin{aligned}
\widetilde{\gamma}(t)&=\big[\widetilde{\mathcal{A}}_1\Pi_3(t)+(\widetilde{\mathcal{C}}_1-\Pi_3(t))\Sigma_4\Pi_1(t)\widetilde{\mathcal{A}}_1\Pi_3(t)\big]Y(t)
                      +\Big\{\widetilde{\mathcal{A}}_1\Pi_4(t)+\mathcal{C}^\top+(\widetilde{\mathcal{C}}_1-\Pi_3(t))\\
                     &\qquad\times\Sigma_4\Pi_1(t)\widetilde{\mathcal{A}}_1\Pi_4(t)
                      +(\widetilde{\mathcal{C}}_1-\Pi_3(t))\big[\Sigma_4\Sigma_2+\Sigma_4\Sigma_3\Sigma_1(\Pi_1(t)+\Pi_2(t))\widetilde{\mathcal{A}}_1\\
                     &\qquad+\Sigma_4\Pi_2(t)\widetilde{\mathcal{A}}_1+\Sigma_4\Sigma_3\Sigma_1\Sigma_2\big](\Pi_3(t)+\Pi_4(t))+(\widetilde{\mathcal{C}}_2-\Pi_4(t))\Sigma_1(\Pi_1(t)\\
                     &\qquad+\Pi_2(t))\widetilde{\mathcal{A}}_1(\Pi_3(t)+\Pi_4(t))+(\widetilde{\mathcal{C}}_2-\Pi_4(t))\Sigma_1(\Pi_1(t)+\Pi_2(t))\mathcal{C}^\top\Big\}\hat{Y}(t)\\
                     &\quad+\Big\{(\widetilde{\mathcal{C}}_1-\Pi_3(t))\big[\Sigma_4\Sigma_2+\Sigma_4\Sigma_3\Sigma_1(\Pi_1(t)+\Pi_2(t))\widetilde{\mathcal{A}}_1+\Sigma_4\Pi_2(t)\widetilde{\mathcal{A}}_1
                      +\Sigma_4\Sigma_3\Sigma_1\Sigma_2\big]\\
                     &\qquad+(\widetilde{\mathcal{C}}_2-\Pi_4(t))\Sigma_1(\Pi_1(t)+\Pi_2(t))\widetilde{\mathcal{A}}_1\Big\}\hat{\widetilde{\varphi}}(t)
                      +\big[\widetilde{\mathcal{A}}_1+(\widetilde{\mathcal{C}}_1-\Pi_3(t))\Sigma_4\Pi_1(t)\widetilde{\mathcal{A}}_1\big]\widetilde{\varphi}(t)\\
                     &\quad+(\widetilde{\mathcal{C}}_1-\Pi_3(t))\big[\Sigma_4\Sigma_3\Sigma_1(\Pi_1(t)+\Pi_2(t))\mathcal{C}^\top+\Sigma_4(\Pi_1(t)+\Pi_2(t))\mathcal{C}^\top\big]\hat{\widetilde{\phi}}(t)\\
                     &\quad+\big[(\widetilde{\mathcal{C}}_1-\Pi_3(t))\Sigma_4\Sigma_3\Sigma_1
                      +(\widetilde{\mathcal{C}}_2-\Pi_4(t))\Sigma_1\big]\hat{\widetilde{\eta}}(t)+(\widetilde{\mathcal{C}}_1-\Pi_3(t))\Sigma_4\widetilde{\eta}(t).
\end{aligned}
\end{equation}
Then taking $\mathbb{E}[\cdot|\mathcal{G}_t^1]$ on \eqref{widetildeZ2}, noting \eqref{Relation2}, we derive
\begin{equation}\label{hatwidetildeZ}
\hat{\widetilde{Z}}(t)=\Sigma_{10}\Pi_1(t)\widetilde{\mathcal{A}}_2\big[(\Pi_3(t)+\Pi_4(t))\hat{Y}(t)+\hat{\widetilde{\varphi}}(t)\big].
\end{equation}
Substituting \eqref{hatZ}, \eqref{Z2}, \eqref{widetildeZ2} and \eqref{hatwidetildeZ} into \eqref{driftdt}, we get
\begin{equation*}
\begin{aligned}
&\big[\dot{\Pi}_3(t)-\mathcal{A}_1\Pi_3(t)-\Pi_3(t)\Sigma_9\Pi_3(t)-\Pi_3(t)\mathcal{A}_1^\top-\Pi_3(t)\widetilde{\mathcal{A}}_1^\top\Sigma_4\Pi_1(t)\widetilde{\mathcal{A}}_1\Pi_3(t)\\
&\quad-\Pi_3(t)\widetilde{\mathcal{A}}_2^\top\Sigma_{10}\Pi_1(t)\widetilde{\mathcal{A}}_2\Pi_3(t)-\mathcal{B}_1\big]Y(t)+\Big\{\dot{\Pi}_4(t)-\Pi_3(t)\Sigma_9\Pi_4(t)-\Pi_3(t)\Sigma_5^\top\\
&\quad-\Pi_3(t)\widetilde{\mathcal{A}}_1^\top\Sigma_4\Pi_1(t)\widetilde{\mathcal{A}}_1\Pi_4(t)-\Pi_3(t)\widetilde{\mathcal{A}}_1^\top\big[\Sigma_4\Sigma_2+\Sigma_4\Sigma_3\Sigma_1(\Pi_1(t)+\Pi_2(t))\widetilde{\mathcal{A}}_1\\
&\quad+\Sigma_4\Pi_2(t)\widetilde{\mathcal{A}}_1+\Sigma_4\Sigma_3\Sigma_1\Sigma_2\big](\Pi_3(t)+\Pi_4(t))-\Pi_3(t)\widetilde{\mathcal{A}}_2^\top\Sigma_{10}\Pi_1(t)\widetilde{\mathcal{A}}_2\Pi_4(t)\\
&\quad-\Pi_4(t)\Sigma_9(\Pi_3(t)+\Pi_4(t))-\Pi_4(t)\Sigma_7^\top-\Pi_4(t)\widetilde{\mathcal{A}}_1^{\top}\Sigma_1(\Pi_1(t)+\Pi_2(t))\widetilde{\mathcal{A}}_1(\Pi_3(t)\\
&\quad+\Pi_4(t))-\Pi_4(t)\widetilde{\mathcal{A}}_1^\top\Sigma_1(\Pi_1(t)+\Pi_2(t))\mathcal{C}^\top-\Pi_4(t)\widetilde{\mathcal{A}}_2^\top\Sigma_{10}\Pi_1(t)\widetilde{\mathcal{A}}_2(\Pi_3(t)+\Pi_4(t))\\
&\quad-\mathcal{A}_1\Pi_4(t)-\Sigma_5(\Pi_3(t)+\Pi_4(t))-\Sigma_6-\mathcal{C}\Sigma_1(\Pi_1(t)+\Pi_2(t))\widetilde{\mathcal{A}}_1(\Pi_3(t)+\Pi_4(t))\\
&\quad-\mathcal{C}\Sigma_1(\Pi_1(t)+\Pi_2(t))\mathcal{C}^\top\Big\}\hat{Y}(t)+\Big\{-\Pi_3(t)\widetilde{\mathcal{A}}_1^\top\big[\Sigma_4\Sigma_2
 +\Sigma_4\Sigma_3\Sigma_1(\Pi_1(t)+\Pi_2(t))\widetilde{\mathcal{A}}_1\\
&\quad+\Sigma_4\Pi_2(t)\widetilde{\mathcal{A}}_1+\Sigma_4\Sigma_3\Sigma_1\Sigma_2\big]-\Pi_4(t)\Sigma_9-\Pi_4(t)\widetilde{\mathcal{A}}_1^\top\Sigma_1(\Pi_1(t)+\Pi_2(t))\widetilde{\mathcal{A}}_1\\
&\quad-\Pi_4(t)\widetilde{\mathcal{A}}_2^\top\Sigma_{10}\Pi_1(t)\widetilde{\mathcal{A}}_2-\Sigma_5-\mathcal{C}\Sigma_1(\Pi_1(t)+\Pi_2(t))\widetilde{\mathcal{A}}_1\Big\}\hat{\widetilde{\varphi}}(t)\\
\end{aligned}
\end{equation*}
\begin{equation}
\begin{aligned}
&+\big[-\Pi_3(t)\Sigma_9-\Pi_3(t)\widetilde{\mathcal{A}}_1^\top\Sigma_4\Pi_1(t)\widetilde{\mathcal{A}}_1
 -\Pi_3(t)\widetilde{\mathcal{A}}_2^\top\Sigma_{10}\Pi_1(t)\widetilde{\mathcal{A}}_2-\mathcal{A}_1\big]\widetilde{\varphi}(t)\\
&\quad-\Pi_3(t)\widetilde{\mathcal{A}}_1^\top\big[\Sigma_4\Sigma_3\Sigma_1(\Pi_1(t)+\Pi_2(t))\mathcal{C}^\top+\Sigma_4(\Pi_1(t)+\Pi_2(t))\mathcal{C}^\top\big]\hat{\widetilde{\phi}}(t)\\
&+\big[-\Pi_3(t)\widetilde{\mathcal{A}}_1^\top\Sigma_4\Sigma_3\Sigma_1-\Pi_4(t)\widetilde{\mathcal{A}}_1^\top\Sigma_1-\mathcal{C}\Sigma_1\big]\hat{\widetilde{\eta}}(t)
 -\Pi_3(t)\widetilde{\mathcal{A}}_1^\top\Sigma_4\widetilde{\eta}(t)+\widetilde{\beta}(t)=0,
\end{aligned}
\end{equation}
If $\Pi_3(\cdot)$ and $\Pi_4(\cdot)$ satisfy the following two Riccati equations, one by one:
\begin{equation}\label{Pi3}
\left\{
\begin{aligned}
&\dot{\Pi}_3(t)-\mathcal{A}_1\Pi_3(t)-\Pi_3(t)\mathcal{A}_1^\top-\Pi_3(t)\Sigma_9\Pi_3(t)-\Pi_3(t)\widetilde{\mathcal{A}}_1^\top\Sigma_4\Pi_1(t)\widetilde{\mathcal{A}}_1\Pi_3(t)\\
&-\Pi_3(t)\widetilde{\mathcal{A}}_2^\top\Sigma_{10}\Pi_1(t)\widetilde{\mathcal{A}}_2\Pi_3(t)-\mathcal{B}_1=0,\ t\in[0,T],\ \Pi_3(0)=\bar{G},
\end{aligned}
\right.
\end{equation}
and
\begin{equation}\label{Pi4}
\left\{
\begin{aligned}
&\dot{\Pi}_4(t)-\Pi_3(t)\Sigma_5^\top-\Sigma_5\Pi_3(t)-\Pi_4(t)\mathcal{A}_1^\top-\mathcal{A}_1\Pi_4(t)-\Pi_4(t)\Sigma_5^\top-\Sigma_5\Pi_4(t)\\
&-\Pi_3(t)\Sigma_9\Pi_4(t)-\Pi_4(t)\Sigma_9\Pi_3(t)-\Pi_4(t)\Sigma_9\Pi_4(t)-\Pi_3(t)\widetilde{\mathcal{A}}_1^\top\Sigma_4\Pi_1(t)\widetilde{\mathcal{A}}_1\Pi_4(t)\\
&-\Pi_3(t)\widetilde{\mathcal{A}}_1^\top\Sigma_4\Sigma_2(\Pi_3(t)+\Pi_4(t))-\Pi_3(t)\widetilde{\mathcal{A}}_1^\top\Sigma_4\Sigma_3\Sigma_1(\Pi_1(t)+\Pi_2(t))\widetilde{\mathcal{A}}_1\\
&\times(\Pi_3(t)+\Pi_4(t))-\Pi_3(t)\widetilde{\mathcal{A}}_1^\top\Sigma_4\Sigma_2\widetilde{\mathcal{A}}_1(\Pi_3(t)+\Pi_4(t))\\
&-\Pi_3(t)\widetilde{\mathcal{A}}_1^\top\Sigma_4\Sigma_3\Sigma_1\Sigma_2(\Pi_3(t)+\Pi_4(t))
 -\Pi_3(t)\widetilde{\mathcal{A}}_2^\top\Sigma_{10}\Pi_1\widetilde{\mathcal{A}}_2\Pi_4(t)\\
&-\Pi_4(t)\widetilde{\mathcal{A}}_1^\top\Sigma_1(\Pi_1(t)+\Pi_2(t))\widetilde{\mathcal{A}}_1(\Pi_3(t)+\Pi_4(t))
 -\Pi_4(t)\widetilde{\mathcal{A}}_1^\top\Sigma_1(\Pi_1(t)+\Pi_2(t))\mathcal{C}^\top\\
&-\Pi_4(t)\widetilde{\mathcal{A}}_2^\top\Sigma_{10}\Pi_1(t)\widetilde{\mathcal{A}}_2(\Pi_3(t)+\Pi_4(t))-\mathcal{C}\Sigma_1(\Pi_1(t)+\Pi_2(t))\mathcal{C}^\top\\
&-\mathcal{C}\Sigma_1(\Pi_1(t)+\Pi_2(t))\widetilde{\mathcal{A}}_1(\Pi_3(t)+\Pi_4(t))-\Sigma_6=0,\ t\in[0,T],\ \Pi_4(0)=0,
\end{aligned}
\right.
\end{equation}
then we get
\begin{equation}\label{widetildebeta}
\begin{aligned}
\widetilde{\beta}(t)=&\Big\{\Pi_3(t)\widetilde{\mathcal{A}}_1^\top\big[\Sigma_4\Sigma_2
                      +\Sigma_4\Sigma_3\Sigma_1(\Pi_1(t)+\Pi_2(t))\widetilde{\mathcal{A}}_1+\Sigma_4\Pi_2(t)\widetilde{\mathcal{A}}_1+\Sigma_4\Sigma_3\Sigma_1\Sigma_2\big]+\Pi_4(t)\Sigma_9\\
                     &\quad-\Pi_4(t)\widetilde{\mathcal{A}}_1^\top\Sigma_1(\Pi_1(t)+\Pi_2(t))\widetilde{\mathcal{A}}_1
                      +\Pi_4(t)\widetilde{\mathcal{A}}_2^\top\Sigma_{10}\Pi_1(t)\widetilde{\mathcal{A}}_2+\Sigma_5+\mathcal{C}\Sigma_1(\Pi_1(t)\\
                     &\quad+\Pi_2(t))\widetilde{\mathcal{A}}_1\Big\}\hat{\widetilde{\varphi}}(t)
                      +\big[\Pi_3(t)\Sigma_9+\Pi_3(t)\widetilde{\mathcal{A}}_1^\top\Sigma_4\Pi_1(t)\widetilde{\mathcal{A}}_1+\Pi_3(t)\widetilde{\mathcal{A}}_2^\top\Sigma_{10}\Pi_1(t)\widetilde{\mathcal{A}}_2\\
                     &\quad+\mathcal{A}_1\big]\widetilde{\varphi}(t)+\Pi_3(t)\widetilde{\mathcal{A}}_1^\top\big[\Sigma_4\Sigma_3\Sigma_1(\Pi_1(t)+\Pi_2(t))\mathcal{C}^\top
                      +\Sigma_4(\Pi_1(t)+\Pi_2(t))\mathcal{C}^\top\big]\hat{\widetilde{\phi}}(t)\\
                     &+\big[\Pi_3(t)\widetilde{\mathcal{A}}_1^\top\Sigma_4\Sigma_3\Sigma_1+\Pi_4(t)\widetilde{\mathcal{A}}_1^\top\Sigma_1+\mathcal{C}\Sigma_1\big]\hat{\widetilde{\eta}}(t)
                      +\Pi_3(t)\widetilde{\mathcal{A}}_1^\top\Sigma_4\widetilde{\eta}(t).
\end{aligned}
\end{equation}
Taking $\mathbb{E}[\cdot|\mathcal{G}_t^1]$ on \eqref{widetildegamma}, we get
\begin{equation}\label{hatwidetildegamma}
\begin{aligned}
\hat{\widetilde{\gamma}}(t)&=\Big\{\widetilde{\mathcal{A}}_1\Pi_3(t)+(\widetilde{\mathcal{C}}_1-\Pi_3(t))\Sigma_4\Pi_1(t)\widetilde{\mathcal{A}}_1\Pi_3(t)+\widetilde{\mathcal{A}}_1\Pi_4(t)+\mathcal{C}^\top+(\widetilde{\mathcal{C}}_1-\Pi_3(t))\\
                           &\qquad\times\Sigma_4\Pi_1(t)\widetilde{\mathcal{A}}_1\Pi_4(t)
                            +(\widetilde{\mathcal{C}}_1-\Pi_3(t))\big[\Sigma_4\Sigma_2+\Sigma_4\Sigma_3\Sigma_1(\Pi_1(t)+\Pi_2(t))\widetilde{\mathcal{A}}_1\\
                           &\qquad+\Sigma_4\Pi_2(t)\widetilde{\mathcal{A}}_1+\Sigma_4\Sigma_3\Sigma_1\Sigma_2\big](\Pi_3(t)+\Pi_4(t))+(\widetilde{\mathcal{C}}_2-\Pi_4(t))\Sigma_1(\Pi_1(t)\\
                           &\qquad+\Pi_2(t))\widetilde{\mathcal{A}}_1(\Pi_3(t)+\Pi_4(t))+(\widetilde{\mathcal{C}}_2-\Pi_4(t))\Sigma_1(\Pi_1(t)+\Pi_2(t))\mathcal{C}^\top\Big\}\hat{Y}(t)\\
                           &\quad+\Big\{(\widetilde{\mathcal{C}}_1-\Pi_3(t))\big[\Sigma_4\Sigma_2+\Sigma_4\Sigma_3\Sigma_1(\Pi_1(t)+\Pi_2(t))\widetilde{\mathcal{A}}_1+\Sigma_4\Pi_2(t)\widetilde{\mathcal{A}}_1
                            +\Sigma_4\Sigma_3\Sigma_1\Sigma_2\big]\\
                           &\qquad+(\widetilde{\mathcal{C}}_2-\Pi_4(t))\Sigma_1(\Pi_1(t)+\Pi_2(t))\widetilde{\mathcal{A}}_1
                            +\widetilde{\mathcal{A}}_1+(\widetilde{\mathcal{C}}_1-\Pi_3(t))\Sigma_4\Pi_1(t)\widetilde{\mathcal{A}}_1\Big\}\hat{\widetilde{\varphi}}(t)\\
                           &\quad+(\widetilde{\mathcal{C}}_1-\Pi_3(t))\big[\Sigma_4\Sigma_3\Sigma_1(\Pi_1(t)+\Pi_2(t))\mathcal{C}^\top+\Sigma_4(\Pi_1(t)+\Pi_2(t))\mathcal{C}^\top\big]\hat{\widetilde{\phi}}(t)\\
                           &\quad+\big[(\widetilde{\mathcal{C}}_1-\Pi_3(t))\Sigma_4\Sigma_3\Sigma_1
                            +(\widetilde{\mathcal{C}}_2-\Pi_4(t))\Sigma_1+(\widetilde{\mathcal{C}}_1-\Pi_3(t))\Sigma_4\big]\hat{\widetilde{\eta}}(t).
\end{aligned}
\end{equation}
Then taking $\mathbb{E}[\cdot|\mathcal{G}_t^1]$ on \eqref{Relation1} and \eqref{Relation2}, we get
\begin{equation}\label{hatYhatX}
\left\{
\begin{aligned}
&\hat{X}(t)=(\Pi_3(t)+\Pi_4(t))\hat{Y}(t)+\hat{\widetilde{\varphi}}(t),\\
&\hat{Y}(t)=(\Pi_1(t)+\Pi_2(t))\hat{X}(t)+\hat{\widetilde{\phi}}(t).
\end{aligned}
\right.
\end{equation}
Then
\begin{equation}\label{hatYhatY}
\hat{Y}(t)=\Sigma_{11}(\Pi_1(t)+\Pi_2(t))\hat{\widetilde{\varphi}}(t)+\Sigma_{11}\hat{\widetilde{\phi}}(t),
\end{equation}
where, we set
\begin{equation*}
\Sigma_{11}=[I-(\Pi_1(t)+\Pi_2(t))(\Pi_3(t)+\Pi_4(t))]^{-1}.
\end{equation*}
Therefore, we can rewrite the equation \eqref{hatwidetildegamma} as
\begin{equation}\label{hatwidetildegamma2}
\begin{aligned}
\hat{\widetilde{\gamma}}(t)&=\Big\{\big\{\widetilde{\mathcal{A}}_1(\Pi_3(t)+\Pi_4(t))+(\widetilde{\mathcal{C}}_1-\Pi_3(t))\Sigma_4\Pi_1(t)\widetilde{\mathcal{A}}_1(\Pi_3(t)+\Pi_4(t))+\mathcal{C}^\top\\
&\qquad+(\widetilde{\mathcal{C}}_1-\Pi_3(t))\big[\Sigma_4\Sigma_2+\Sigma_4\Sigma_3\Sigma_1(\Pi_1(t)+\Pi_2(t))\widetilde{\mathcal{A}}_1+\Sigma_4\Pi_2(t)\widetilde{\mathcal{A}}_1\\
&\qquad+\Sigma_4\Sigma_3\Sigma_1\Sigma_2\big](\Pi_3(t)+\Pi_4(t))+(\widetilde{\mathcal{C}}_2-\Pi_4(t))\Sigma_1(\Pi_1(t)+\Pi_2(t))\widetilde{\mathcal{A}}_1(\Pi_3(t)+\Pi_4(t))\\
&\qquad+(\widetilde{\mathcal{C}}_2-\Pi_4(t))\Sigma_1(\Pi_1(t)+\Pi_2(t))\mathcal{C}^\top\big\}\Sigma_{11}(\Pi_1(t)+\Pi_2(t))\\
&\qquad+(\widetilde{\mathcal{C}}_1-\Pi_3(t))\big[\Sigma_4\Sigma_2+\Sigma_4\Sigma_3\Sigma_1(\Pi_1(t)+\Pi_2(t))\widetilde{\mathcal{A}}_1+\Sigma_4\Pi_2(t)\widetilde{\mathcal{A}}_1+\Sigma_4\Sigma_3\Sigma_1\Sigma_2\big]\\
&\qquad+(\widetilde{\mathcal{C}}_2-\Pi_4(t))\Sigma_1(\Pi_1(t)+\Pi_2(t))\widetilde{\mathcal{A}}_1+\widetilde{\mathcal{A}}_1
 +(\widetilde{\mathcal{C}}_1-\Pi_3(t))\Sigma_4\Pi_1(t)\widetilde{\mathcal{A}}_1\Big\}\hat{\widetilde{\varphi}}(t)\\
&\quad+\Big\{\big\{\widetilde{\mathcal{A}}_1(\Pi_3(t)+\Pi_4(t))+(\widetilde{\mathcal{C}}_1-\Pi_3(t))\Sigma_4\Pi_1(t)\widetilde{\mathcal{A}}_1(\Pi_3(t)+\Pi_4(t))+\mathcal{C}^\top\\
&\qquad+(\widetilde{\mathcal{C}}_1-\Pi_3(t))\big[\Sigma_4\Sigma_2+\Sigma_4\Sigma_3\Sigma_1(\Pi_1(t)+\Pi_2(t))\widetilde{\mathcal{A}}_1+\Sigma_4\Pi_2(t)\widetilde{\mathcal{A}}_1\\
&\qquad+\Sigma_4\Sigma_3\Sigma_1\Sigma_2\big](\Pi_3(t)+\Pi_4(t))+(\widetilde{\mathcal{C}}_2-\Pi_4(t))\Sigma_1(\Pi_1(t)+\Pi_2(t))\widetilde{\mathcal{A}}_1(\Pi_3(t)+\Pi_4(t))\\
&\qquad+(\widetilde{\mathcal{C}}_2-\Pi_4(t))\Sigma_1(\Pi_1+\Pi_2)\mathcal{C}^\top\big\}\Sigma_{11}\\
&\qquad+(\widetilde{\mathcal{C}}_1-\Pi_3(t))\big[\Sigma_4\Sigma_3\Sigma_1(\Pi_1(t)+\Pi_2(t))\mathcal{C}^\top+\Sigma_4(\Pi_1(t)+\Pi_2(t))\mathcal{C}^\top\big]\Big\}\hat{\widetilde{\phi}}(t)\\
&\quad+\Big\{(\widetilde{\mathcal{C}}_1-\Pi_3(t))\Sigma_4\Sigma_3\Sigma_1+(\widetilde{\mathcal{C}}_2-\Pi_4(t))\Sigma_1+(\widetilde{\mathcal{C}}_1-\Pi_3(t))\Sigma_4\Big\}\hat{\widetilde{\eta}}(t).
\end{aligned}
\end{equation}
By taking $\mathbb{E}[\cdot|\mathcal{G}_t^1]$ on \eqref{widetildebeta}, we can get
\begin{equation}\label{hatwidetildebeta}
\begin{aligned}
\hat{\widetilde{\beta}}(t)=&\Big\{\Pi_3(t)\widetilde{\mathcal{A}}_1^\top\big[\Sigma_4\Sigma_2+\Sigma_4\Sigma_3\Sigma_1(\Pi_1(t)+\Pi_2(t))\widetilde{\mathcal{A}}_1
 +\Sigma_4\Pi_2(t)\widetilde{\mathcal{A}}_1+\Sigma_4\Sigma_3\Sigma_1\Sigma_2\big]\\
&\quad+\Pi_4(t)\Sigma_9+\Pi_4(t)\widetilde{\mathcal{A}}_1^\top\Sigma_1(\Pi_1(t)+\Pi_2(t))\widetilde{\mathcal{A}}_1\\
&+\Pi_4(t)\widetilde{\mathcal{A}}_2^\top\Sigma_{10}\Pi_1(t)\widetilde{\mathcal{A}}_2+\Sigma_5+\mathcal{C}\Sigma_1(\Pi_1(t)+\Pi_2(t))\widetilde{\mathcal{A}}_1+\Pi_3(t)\Sigma_9\\
&+\Pi_3(t)\widetilde{\mathcal{A}}_1^\top\Sigma_4\Pi_1(t)\widetilde{\mathcal{A}}_1+\Pi_3(t)\widetilde{\mathcal{A}}_2^\top\Sigma_{10}\Pi_1(t)\widetilde{\mathcal{A}}_2+\mathcal{A}_1\Big\}\hat{\widetilde{\varphi}}(t)\\
&+\Pi_3(t)\widetilde{\mathcal{A}}_1^\top\big[\Sigma_4\Sigma_3\Sigma_1(\Pi_1(t)+\Pi_2(t))\mathcal{C}^\top+\Sigma_4(\Pi_1+\Pi_2)\mathcal{C}^\top\big]\hat{\widetilde{\phi}}(t)\\
&+\big[\Pi_3(t)\widetilde{\mathcal{A}}_1^\top\Sigma_4\Sigma_3\Sigma_1+\Pi_4(t)\widetilde{\mathcal{A}}_1^\top\Sigma_1+\mathcal{C}\Sigma_1+\Pi_3(t)\widetilde{\mathcal{A}}_1^\top\Sigma_4\big]\hat{\widetilde{\eta}}(t).
\end{aligned}
\end{equation}
Thus, we can derive the equation of $\hat{\widetilde{\varphi}}(\cdot)$:
\begin{equation}\label{hatwidetildevarphi}
\left\{
\begin{aligned}
d\hat{\widetilde{\varphi}}(t)&=\hat{\widetilde{\beta}}(t)dt+\hat{\widetilde{\gamma}}(t)dW(t),\ t\in[0,T],\\
 \hat{\widetilde{\varphi}}(0)&=0,
\end{aligned}
\right.
\end{equation}
where $\hat{\widetilde{\beta}}(t)$ and $\hat{\widetilde{\gamma}}(t)$ satisfy \eqref{hatwidetildebeta} and \eqref{hatwidetildegamma2}, respectively.

\begin{theorem}
Under assumption {\bf(L1)}, {\bf(L2)} and {\bf(L3)}, suppose the Riccati equations \eqref{Pi1}, \eqref{Pi2}, \eqref{Pi3} and \eqref{Pi4} admit differentiable solutions $\Pi_1(\cdot)$, $\Pi_2(\cdot)$, $\Pi_3(\cdot)$ and $\Pi_4(\cdot)$, respectively. Then the leader's problem is solvable with the optimal strategy $\bar{v}_2(\cdot)$ being of a state estimate feedback representation
\begin{equation}\label{barv2}
\begin{aligned}
\bar{v}_2(t)&=-R_2^{-1}(t)\widetilde{\mathcal{B}}_2^\top\Pi_3(t)Y(t)-R_2^{-1}(t)\big[\widetilde{\mathcal{B}}_2^\top\Pi_4(t)+\mathcal{D}^\top\big]\hat{Y}(t)\\
            &\quad-R_2^{-1}(t)\widetilde{\mathcal{B}}_2^\top\widetilde{\varphi}(t),\ a.e.\ t\in[0,T],\ a.s.,
\end{aligned}
\end{equation}
where $Y(\cdot)$, $\hat{Y}(\cdot)$ satisfy the following BSDEs, respectively:
\begin{equation}\label{Y}
\left\{
\begin{aligned}
-dY(t)&=\big[(\Sigma_9\Pi_3(t)+\mathcal{A}_1^\top)Y(t)+(\Sigma_9\Pi_4(t)+\Sigma_5^\top)\hat{Y}(t)+\Sigma_9\widetilde{\varphi}(t)+\widetilde{\mathcal{A}}_1^\top Z(t)+\widetilde{\mathcal{A}}_2^\top\widetilde{Z}(t)\big]dt\\
      &\quad-Z(t)dW(t)-\widetilde{Z}(t)d\widetilde{W}(t),\ t\in[0,T],\\
Y(T)&=\bar{\xi},
\end{aligned}
\right.
\end{equation}
\begin{equation}\label{hatY}
\left\{
\begin{aligned}
-d\hat{Y}(t)&=\big[(\Sigma_9(\Pi_3(t)+\Pi_4(t))+\mathcal{A}_1^\top+\Sigma_5^\top)\hat{Y}(t)+\Sigma_9\hat{\widetilde{\varphi}}(t)+\widetilde{\mathcal{A}}_1^\top \hat{Z}(t)+\widetilde{\mathcal{A}}_2^\top\hat{\widetilde{Z}}(t)\big]dt\\
            &\quad-\hat{Z}(t)dW(t),\ t\in[0,T],\\
\hat{Y}(T)&=\hat{\bar{\xi}},
\end{aligned}
\right.
\end{equation}
and $\widetilde{\varphi}(\cdot)$ satisfies the SDE \eqref{widevarphi}.
Meanwhile, by \eqref{Relation1} and \eqref{Relation2}, we can get
\begin{equation}\label{Yvarphi}
\begin{aligned}
&(I-\Pi_1(t)\Pi_3(t))Y(t)-\big[\Pi_1(t)\Pi_4(t)+\Pi_2(t)(\Pi_3(t)+\Pi_4(t))\big]\hat{Y}(t)\\
&=\Pi_2(t)\hat{\widetilde{\varphi}}(t)+\widetilde{\phi}(t)+\Pi_1(t)\widetilde{\varphi}(t).
\end{aligned}
\end{equation}
\end{theorem}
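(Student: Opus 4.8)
The plan is to treat this as a synthesis-and-verification statement that assembles the decoupling carried out on the pages preceding it. Since solvability of the four Riccati equations \eqref{Pi1}--\eqref{Pi4} is \emph{assumed}, the task splits into two halves: (i) constructing an admissible candidate $\bar v_2(\cdot)$ together with the full solution of the coupled conditional mean-field FBSDE \eqref{2fbsdes2}, and (ii) verifying that this candidate is genuinely optimal. First I would recall that the necessary condition from the leader's maximum principle (Theorem 3.3) forces \eqref{barv_2-initial}, hence $\bar v_2(t)=-R_2^{-1}(t)[\widetilde{\mathcal B}_2^\top X(t)+\mathcal D^\top\hat Y(t)]$ as in \eqref{barv_2}, and that the optimal ``state'' $(X,Y,Z,\widetilde Z)$ must solve \eqref{2fbsdes2}. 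Everything therefore reduces to showing \eqref{2fbsdes2} is solvable and then reading off the feedback form.

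For the construction I would use the two ans\"atze \eqref{Relation1} and \eqref{Relation2} exactly as set up. Applying It\^o's formula and matching the $dt$, $dW$, $d\widetilde W$ coefficients produces the Riccati systems \eqref{Pi1}--\eqref{Pi4} together with the linear filtering equations \eqref{hatwidetildephi} for $(\hat{\widetilde\phi},\hat{\widetilde\eta})$ and \eqref{hatwidetildevarphi} for $\hat{\widetilde\varphi}$. Granting the hypothesized differentiable $\Pi_1,\dots,\Pi_4$, these two equations have bounded deterministic coefficients and square-integrable inhomogeneities, so by standard BSDE/SDE theory (as noted in Remark 4.1) they are uniquely solvable; this yields $\hat{\widetilde\varphi},\hat{\widetilde\phi},\hat{\widetilde\eta}$, and then $\hat Y$ through \eqref{hatYhatY}. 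With the conditional-mean quantities fixed, I would solve the backward equations \eqref{Y}--\eqref{hatY} for $(Y,Z,\widetilde Z)$ and their filters, recover $X$ and $\hat X$ from \eqref{Relation1}--\eqref{Relation2}, and obtain \eqref{Yvarphi} from the mutual consistency of the two relations. Substituting \eqref{Relation2} into \eqref{barv_2} then gives precisely the state-estimate feedback \eqref{barv2}.

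To close the argument I would invoke the verification theorem (Theorem 3.4): in the LQ setting the Hamiltonian \eqref{Hamiltonian2} is a sum of terms linear in $(y,z,\tilde z,x,\hat\varphi)$ plus a quadratic form with coefficients $Q_2,N_1,N_2\ge0$ and $R_2>0$ from {\bf(L3)}, hence convex in $(y,z,\tilde z,v_2,x)$, while $h_2$ is convex in $y$ by $G_2\ge0$; the strict positivity $R_2>0$ also makes the minimizer in \eqref{barv2} unique. The hypotheses of Theorem 3.4 are thus met and $\bar v_2(\cdot)$ is optimal, which upgrades the candidate produced from the necessary condition to a genuine solution of the leader's problem.

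The hard part will be the well-posedness underlying the decoupling rather than the bookkeeping. Specifically, the coefficient matching silently requires the inverses $\Sigma_1=[I-(\Pi_1+\Pi_2)(\widetilde{\mathcal C}_1+\widetilde{\mathcal C}_2)]^{-1}$, $\Sigma_4=(I-\Pi_1\widetilde{\mathcal C}_1)^{-1}$, $\Sigma_{10}=(I-\Pi_1\widetilde{\mathcal C}_3)^{-1}$ and $\Sigma_{11}=[I-(\Pi_1+\Pi_2)(\Pi_3+\Pi_4)]^{-1}$ to exist along $[0,T]$, and it requires that imposing \eqref{Relation1} and \eqref{Relation2} simultaneously be consistent, which is exactly what \eqref{Yvarphi} encodes. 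I would have to argue that the assumed solvability of \eqref{Pi1}--\eqref{Pi4}, together with the block structure of the $\widetilde{\mathcal C}_i$, guarantees these invertibilities, so that the fully coupled FBSDE \eqref{2fbsdes2} really closes into the forward-backward filtering system and the feedback gains in \eqref{barv2} are well defined.
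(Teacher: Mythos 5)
Your proposal is correct and follows essentially the same route as the paper: the necessary condition from Theorem 3.3 gives \eqref{barv_2-initial}, the ans\"atze \eqref{Relation1}--\eqref{Relation2} together with It\^o's formula and coefficient matching produce the Riccati equations \eqref{Pi1}--\eqref{Pi4} and the filtering equations, and, granting their solvability, one solves \eqref{hatwidetildephi}, \eqref{widephi} and \eqref{hatwidetildevarphi}, obtains $\hat{Y}$ from \eqref{hatYhatY}, then $Y(\cdot)$ and $\widetilde{\varphi}(\cdot)$ from \eqref{Y} and \eqref{Yvarphi}, and reads off the feedback form \eqref{barv2}, exactly as in the paper's proof. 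Your two additions --- the explicit convexity check under {\bf(L3)} so that Theorem 3.4 upgrades the candidate from the necessary condition to a genuine optimum, and the observation that the inverses $\Sigma_1$, $\Sigma_4$, $\Sigma_{10}$, $\Sigma_{11}$ must exist on $[0,T]$ for the decoupling to close --- are points the paper's proof leaves implicit; they strengthen rather than change the argument.
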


\begin{proof}
For given $\xi$, let $\Pi_1(\cdot)$ and $\Pi_2(\cdot)$ satisfy \eqref{Pi1} and \eqref{Pi2}, respectively. By the standard BSDE theory, we can solve \eqref{hatwidetildephi} to obtain $(\hat{\widetilde{\phi}}(\cdot),\hat{\widetilde{\eta}}(\cdot))$, and due to \eqref{widetildealpha}, we can solve \eqref{widephi} to obtain $(\widetilde{\phi}(\cdot),\widetilde{\eta}(\cdot))$. Let $\Pi_3(\cdot)$ and $\Pi_4(\cdot)$ satisfy \eqref{Pi3} and \eqref{Pi4}, respectively. By the standard SDE theory, we can solve \eqref{hatwidetildevarphi} to obtain $\hat{\widetilde{\varphi}}(\cdot)$, then $\hat{Y}(\cdot)$ can be solved by \eqref{hatYhatY}. Due to \eqref{Y} and \eqref{Yvarphi}, we can get $Y(\cdot)$ and $\widetilde{\varphi}(\cdot)$, thus the state estimate feedback representation \eqref{barv2} can be obtained. The proof is complete.
\end{proof}

Likewise, from \eqref{barv_1}, the optimal control $\bar{v}_1(\cdot)$ of the follower can also be represented in a nonanticipating way:
\begin{equation}\label{barv 1 }
\begin{aligned}
\bar{v}_1(t)=&-R_1^{-1}(t)B_1(t)^\top\Big\{\big[(0,P_2(t))+(1,0)\Pi_4(t)\big]\hat{Y}(t)+(1,0)\Pi_3(t)Y(t)+(1,0)\widetilde{\varphi}(t)\Big\},\\
&\ a.e.\ t\in[0,T],\ a.s.
\end{aligned}
\end{equation}

\section{Application to Pension Fund Management Problem}

In this section, we are denoted to study a defined benefit (DB) pension fund management problem arising from financial markets, which naturally motivate the above theoretical research of the LQ Stackelberg game for BSDE with partial information.

It is well known that a pension fund can be classified into two main categories: Defined benefit (DB) pension scheme and defined contribution (DC) pension scheme. In a DB scheme, the benefits are fixed in advance by the sponsor, and the contributions are designed to assure the future payments to claim holders in their retirement period. There are two corresponding representative members who makes contributions continuously over time to the pension fund in $[0,T]$. One of the members is the leader with the regular premium proportion $v_2$ as his contribution, who is usually regarded as the supervisory, government or company. And the other one is the follower with the regular premium proportion $v_1$ as his contribution, who is usually regarded as the individual producer or retail investor. Premiums are a proportion of salary or income which are continuously deposited into the pension fund plan member's account as the contributions.

We consider a continuous-time setup, and the dynamics of pension fund plan member's account is given by
\begin{equation}
dF(t)=F(t)d\Delta(t)+(v_1(t)+v_2(t)-DB)dt,
\end{equation}
where $F(t)$ is the value process of pension fund plan member's account at time $t$, $d\Delta(t)$ is the instantaneous return during the time interval $(t,t+dt)$, $v_1(\cdot)$ and $v_2(\cdot)$ are the premium proportions of follower and leader which acts as our control variables, respectively. $DB$ is the pension scheme benefit outgo which is assumed to be a constant for sake of simplicity.

Suppose that the pension fund is invested in a risk-free asset (bond) and two risky assets (stocks). The price $S_0(t)$ of the bond at time $t$ is given by
\begin{equation}\label{bond}
\left\{
\begin{aligned}
dS_0(t)&=r(t)S_0(t)dt,\ t\geq0,\\
 S_0(0)&=1,
\end{aligned}
\right.
\end{equation}
where $r(t)>0$ is the instantaneous rate of return at time $t$.

The prices $S_1(t)$ and $S_2(t)$ of the two stocks at time $t$ are given by
\begin{equation}\label{stock1}
\left\{
\begin{aligned}
dS_1(t)&=S_1(t)[\mu_1(t)dt+\sigma(t)dW(t)],\ t\geq0,\\
 S_1(0)&=S_0^1,
\end{aligned}
\right.
\end{equation}
\begin{equation}\label{stock2}
\left\{
\begin{aligned}
dS_2(t)&=S_2(t)[\mu_2(t)dt+\widetilde{\sigma}(t)d\widetilde{W}(t)],\ t\geq0,\\
 S_2(0)&=S_0^2,
\end{aligned}
\right.
\end{equation}
respectively, where $W(\cdot)$ and $\widetilde{W}(\cdot)$ are two independent one-dimension Brownian motion. Here $\mu_i(t)>r(t), i=1,2$ are the instantaneous rates of expected return and $\sigma(t),\widetilde{\sigma}(t)>0$ are the instantaneous rates of volatility, at time $t$. We assume that $\mu_1(\cdot),\mu_2(\cdot),r(\cdot),\sigma(\cdot)$ and $\widetilde{\sigma}(\cdot)$ are deterministic bounded functions, and $\sigma^{-1}(\cdot)$ and $\widetilde{\sigma}^{-1}(\cdot)$ are also bounded.

In the real financial market, it is reasonable for the investors to make decisions based on the historical price of the risky asset $S_1(\cdot)$ and $S_2(\cdot)$. Therefore, the observable filtration at time $t$ can be set as $\mathcal{F}_t=\sigma\{S_1(s),S_2(s)|0\leq s\leq t\}$ and it is clear that $\mathcal{F}_t=\sigma\{W(s),\widetilde{W}(s)|0\leq s\leq t\}$. However, in our Stackelberg game background, there exists two different asymmetric information  for two players, to some degree, because of some practical phenomenon such as insider trading or the information asymmetry. So we assume that the one who plays a leader's role knows the full information from the financial market including the price of the risky assets $S_1(\cdot)$ and $S_2(\cdot)$, which is called $\mathcal{F}_t=\sigma\{W(s),\widetilde{W}(s)|0\leq s\leq t\}$, but the other one who plays a follower's role only knows the partial information about the price $S_1(\cdot)$ coming from $\mathcal{G}_t^1=\sigma\{W(s)|0\leq s\leq t\}$. Obviously, $\mathcal{G}_t^1\subset\mathcal{F}_t$.

Suppose that the proportion $\pi_1(t)$ and $\pi_2(t)$ of the pension fund is to be allocated in the two stock, respectively, while $1-\pi_1(t)-\pi_2(t)$ is to be allocated in the bond, at time $t$. Thus the instantaneous return becomes
\begin{equation}\label{Delta}
\begin{aligned}
d\Delta(t)&=\big[r(t)+(\mu_1(t)-r(t))\pi_1(t)+(\mu_2(t)-r(t))\pi_2(t)\big]dt\\
          &\quad+\sigma(t)\pi_1(t)dW(t)+\widetilde{\sigma}(t)\pi_2(t)d\widetilde{W}(t).
\end{aligned}
\end{equation}
Therefore, the pension fund dynamics can be written as the following form:
\begin{equation}\label{F}
\begin{aligned}
dF(t)&=\big[r(t)F(t)+(\mu_1(t)-r(t))\pi_1(t)F(t)+(\mu_2(t)-r(t))\pi_2(t)F(t)\\
     &\quad+v_1(t)+v_2(t)-DB\big]dt+\sigma(t)\pi_1(t)F(t)dW(t)+\widetilde{\sigma}(t)\pi_2(t)F(t)d\widetilde{W}(t).
\end{aligned}
\end{equation}
On the one hand, if the pension fund manager wants to achieve the wealth level $\xi$ at the terminal time $T$ to fulfill his/her obligations, then the dynamics of pension fund plan member's account is
\begin{equation}\label{dynamic}
\left\{
\begin{aligned}
dF(t)&=\big[r(t)F(t)+(\mu_1(t)-r(t))\pi_1(t)F(t)+(\mu_2(t)-r(t))\pi_2(t)F(t)+v_1(t)\\
     &\quad+v_2(t)-DB\big]dt+\sigma(t)\pi_1(t)F(t)dW(t)+\widetilde{\sigma}(t)\pi_2(t)F(t)d\widetilde{W}(t),\ t\in[0,T],\\
 F(T)&=\xi.
\end{aligned}
\right.
\end{equation}
On the other hand, if we set $\sigma(\cdot)\pi_1(\cdot)F(\cdot)=Z(\cdot)$ and $\widetilde{\sigma}(\cdot)\pi_2(\cdot)F(\cdot)=\widetilde{Z}(\cdot)$, then the above equation is equivalent to the BSDE
\begin{equation}\label{BSDE}
\left\{
\begin{aligned}
-dF(t)&=-\bigg[r(t)F(t)+\frac{\mu_1(t)-r(t)}{\sigma(t)}Z(t)+\frac{\mu_2(t)-r(t)}{\widetilde{\sigma}(t)}\widetilde{Z}(t)+v_1(t)+v_2(t)-DB\bigg]dt\\
      &\quad-Z(t)dW(t)-\widetilde{Z}(t)d\widetilde{W}(t),\ t\in[0,T],\\
  F(T)&=\xi.
\end{aligned}
\right.
\end{equation}
where the control processes $v_1(\cdot)$ and $v_2(\cdot)$ are adapted to the information filtration $\mathcal{G}_t^1$ and $\mathcal{F}_t$, respectively.

Let $\mathcal{U}_1[0,T]=\big\{v_1(\cdot)\in L_{\mathcal{G}^1}^2(0,T;\mathbb{R})|v_1(t)\in\mathbb{R},t\in[0,T]\big\}$ and $\mathcal{U}_2[0,T]=\big\{v_2(\cdot)\in L_{\mathcal{F}}^2(0,T;\mathbb{R})|\\
v_2(t)\in\mathbb{R},t\in[0,T]\big\}$ denote the admissible control sets for the follower and leader, respectively. For any $(v_1(\cdot),v_2(\cdot))\in\mathcal{U}_1\times\mathcal{U}_2$, the BSDE \eqref{BSDE} admits a unique solution triple $(F(\cdot),Z(\cdot),\widetilde{Z}(\cdot))$ in $L_{\mathcal{F}}^2(0,T;\mathbb{R})\times L_{\mathcal{F}}^2(0,T;\mathbb{R})\times L_{\mathcal{F}}^2(0,T;\mathbb{R})$.

Let us introduce the cost functionals
\begin{equation}\label{cost func3}
J_i(v_1(\cdot),v_2(\cdot);\xi)=\mathbb{E}\bigg[\int_0^T\frac{1}{2}e^{-\beta t}(v_i(t)-NC)^2dt+F^2(0)\bigg],\ i=1,2,
\end{equation}
where $\beta$ is a discount factor and $NC$ is a preset target, say, the normal cost.
The aim of the members is to minimize the cost functional $J_i(v_1(\cdot),v_2(\cdot);\xi)$ over $\mathcal{U}_i$, $i=1,2$. Recall that the first term of $J_i(u_1(\cdot),u_2(\cdot);\xi)$  is the running cost due to the deviation of the contribution from the preset target level. This term is introduced here to measure the stability of the DB pension scheme. The second term $F(0)$ is just the initial reserve to operate the scheme.

Let us now explain the leader-follower feature of the game. At time $t$, first, the big company (leader) announces his/her contribution (premium proportion) $v_2(t)$. Second, with the help of the part of informations the retail investor (follower) knows, he/she would like to set his/her contribution (premium proportion) $\bar{v}_1(t)$ as his optimal response to the company's announced decisions so that $J_1(\bar{v}_1(\cdot),v_2(\cdot);\xi)$ is the minimum of $J_1(v_1(\cdot),v_2(\cdot);\xi)$ over $v_1(\cdot)\in\mathcal{U}_1$. Knowing the follower would take such an optimal control $\bar{v}_1(\cdot)$ (supposing it exists, which depends on the choice $v_2(\cdot)$ of the leader and the initial state $\xi$, in general), and having the advantages over the follower in case of possessing more information, the big company (leader) would like to choose some $\bar{v}_2(\cdot)\in\mathcal{U}_2$ to minimize $J_2(\bar{v}_1(\cdot),v_2(\cdot);\xi)$ over $v_2(\cdot)\in\mathcal{U}_2$.

We aim to find the Stackelberg equilibrium point $(\bar{v}_1(\cdot),\bar{v}_2(\cdot))\in\mathcal{U}_1\times\mathcal{U}_2$, which is the optimal control pairs of the Stackelberg game of BSDE with partial information.

There is much literature to study the pension fund management problem by stochastic control approach, such as Huang et al. \cite{HWX09}, Di Giacinto et al. \cite{DFG11}, etc. However, our problems are essentially different in that we study the pension fund problem in the framework of Stackelberg game of BSDE with partial information. For more details about financial applications for partial information differential games, please refer to Wang and Yu \cite{WY12}, Shi and Wang \cite{SW16}, Huang et al. \cite{HWZ19}, Xiong et al. \cite{XZZ19}, etc.

It is obvious that this problems can be regarded as a special case of that in Section 4. So we can use the results to solve it. For the simplicity of the calculations in this example, we set $DB=NC=0$. Comparing to \eqref{LQbsde}, \eqref{LQ cost functional} and \eqref{LQ cost functional2}, we know in this section $A(t)=-r(t)$, $B_1(t)=B_2(t)=-1$, $C_1(t)=-\frac{\mu_1(t)-r(t)}{\sigma(t)}$, $C_2(t)=-\frac{\mu_2(t)-r(t)}{\widetilde{\sigma}(t)}$, $Q_1(t)=Q_2(t)=0$, $R_1(t)=R_2(t)=e^{-\beta t}$, $S_1(t)=S_2(t)=0$, $N_1(t)=N_2(t)=0$ and $G_1(t)=G_2(t)=2$ for any $t\in[0,T]$.

Firstly, we solve the follower's problem. For any given $\xi\in L_{\mathcal{F}_T}^2(\Omega;\mathbb{R})$ and $v_2(\cdot)\in\mathcal{U}_2[0,T]$, using Theorem 4.1, we can get
\begin{equation}\label{5v1}
\bar{v}_1(t)=e^{\beta t}\big[P_2(t)\hat{y}^{\bar{v}_1,\hat{v}_2}+\hat{\varphi}(t)\big],
\end{equation}
where $(\hat{y}^{\bar{v}_1,\hat{v}_2}(\cdot),\hat{z}^{\bar{v}_1,\hat{v}_2}(\cdot),\hat{\widetilde{z}}^{\bar{v}_1,\hat{v}_2}(\cdot))$ satisfy the following FBSDE
\begin{equation}
\left\{
\begin{aligned}
          -d\hat{y}^{\bar{v}_1,\hat{v}_2}(t)&=\Big[-(e^{\beta t}P_2(t)+r(t))\hat{y}^{\bar{v}_1,\hat{v}_2}(t)-e^{\beta t}\hat{\varphi}(t)-\hat{v}_2(t)-\frac{\mu_1(t)-r(t)}{\sigma(t)}\hat{z}^{\bar{v}_1,\hat{v}_2}(t)\\
                                            &\quad-\frac{\mu_2(t)-r(t)}{\widetilde{\sigma}(t)}\hat{\widetilde{z}}^{\bar{v}_1,\hat{v}_2}(t)\Big]dt-\hat{z}^{\bar{v}_1,\hat{v}_2}(t)dW(t),\\
                                 d\hat{x}(t)&=-r(t)\hat{x}(t)dt-\frac{\mu_1(t)-r(t)}{\sigma(t)}\hat{x}(t)dW(t),\ t\in[0,T],\\
\hat{\widetilde{z}}^{\bar{v}_1,\hat{v}_2}(t)&=\frac{\mu_2(t)-r(t)}{\widetilde{\sigma}(t)}P_1(t)\hat{x}(t),\ t\in[0,T],\ \hat{y}^{\bar{v}_1,\hat{v}_2}(T)=\hat{\xi},\ \hat{x}(0)=2\hat{y}^{\bar{v}_1,\hat{v}_2}(0),
\end{aligned}
\right.
\end{equation}
$P_1(\cdot)$ and $P_2(\cdot)$ satisfy the following two Riccati equations
\begin{equation}
\left\{
\begin{aligned}
&\dot{P}_1(t)+\Big[\Big(\frac{\mu_1(t)-r(t)}{\sigma(t)}\Big)^2+\Big(\frac{\mu_2(t)-r(t)}{\widetilde{\sigma}(t)}\Big)^2-2r(t)\Big]P_1(t)+e^{\beta t}=0,\ t\in[0,T],\\
&P_1(T)=0,
\end{aligned}
\right.
\end{equation}
\begin{equation}
\left\{
\begin{aligned}
&\dot{P}_2(t)+\Big[\Big(\frac{\mu_1(t)-r(t)}{\sigma(t)}\Big)^2+\Big(\frac{\mu_2(t)-r(t)}{\widetilde{\sigma}(t)}\Big)^2\Big]P_2^2(t)P_1(t)+e^{\beta t}P_2^2(t)\\
&+2r(t)P_2(t)=0,\ t\in[0,T],\ P_2(0)=2,
\end{aligned}
\right.
\end{equation}
respectively, and $(\hat{\varphi}(\cdot),\hat{\phi}(\cdot),\hat{\eta}(\cdot))$ satisfy the following FBSDE
\begin{equation}
\left\{
\begin{aligned}
d\hat{\varphi}(t)&=\bigg\{\bigg[-\Big[\Big(\frac{\mu_1(t)-r(t)}{\sigma(t)}\Big)^2+\Big(\frac{\mu_2(t)-r(t)}{\widetilde{\sigma}(t)}\Big)^2\Big]P_1(t)P_2(t)-e^{\beta t}P_2(t)\\
                 &\qquad-r(t)\bigg]\hat{\varphi}(t)+\frac{\mu_1(t)-r(t)}{\sigma(t)}P_2(t)\hat{\eta}(t)-P_2(t)\hat{v}_2(t)\bigg\}dt\\
                 &\quad+\Big\{\frac{\mu_1(t)-r(t)}{\sigma(t)}P_2(t)\hat{\phi}(t)-\frac{\mu_1(t)-r(t)}{\sigma(t)}\hat{\varphi}(t)+P_2(t)\hat{\eta}(t)\Big\}dW(t),\\
  -d\hat{\phi}(t)&=\Big\{-r(t)\hat{\phi}(t)+\hat{v}_2(t)-\frac{\mu_1(t)-r(t)}{\sigma(t)}\hat{\eta}(t)\Big\}dt-\hat{\eta}(t)dW(t),\ t\in[0,T],\\
    \hat{\phi}(T)&=-\hat{\xi},\ \hat{\varphi}(0)=0.
\end{aligned}
\right.
\end{equation}

Next, we solve the leader's problem, noting \eqref{notation} and putting
\begin{equation*}
\left\{
\begin{aligned}
&\mathcal{A}_1=
\begin{pmatrix}
-r(t)-e^{\beta t}P_2(t)-\Big(\frac{\mu_2(t)-r(t)}{\widetilde{\sigma}(t)}\Big)^2P_1(t)P_2(t)&0\\
0&-r(t)
\end{pmatrix},\ \
\mathcal{A}_2=
\begin{pmatrix}
0&0\\
0&-e^{\beta t}P_2(t)
\end{pmatrix},\\
&\widetilde{\mathcal{A}}_1=
\begin{pmatrix}
-\frac{\mu_1(t)-r(t)}{\sigma(t)}&0\\
0&-\frac{\mu_1(t)-r(t)}{\sigma(t)}
\end{pmatrix},\ \
\widetilde{\mathcal{A}}_2=
\begin{pmatrix}
0&0\\
0&-\frac{\mu_2(t)-r(t)}{\widetilde{\sigma}(t)}
\end{pmatrix},\ \
\mathcal{B}_1=
\begin{pmatrix}
0&0\\
0&0
\end{pmatrix},\\
&\widetilde{\mathcal{B}}_2=
\begin{pmatrix}
0\\
-1
\end{pmatrix},\ \
\mathcal{B}_2=
\begin{pmatrix}
0&\Big(\frac{\mu_1(t)-r(t)}{\sigma(t)}\Big)^2P_2^2(t)P_1(t)\\
\Big(\frac{\mu_1(t)-r(t)}{\sigma(t)}\Big)^2P_2^2(t)P_1(t)&0
\end{pmatrix},\\
&\mathcal{C}=
\begin{pmatrix}
0&-\frac{\mu_1(t)-r(t)}{\sigma(t)}P_2(t)\\
-\frac{\mu_1(t)-r(t)}{\sigma(t)}P_2(t)&0
\end{pmatrix},\ \
\widetilde{\mathcal{C}}_1=
\begin{pmatrix}
0&0\\
0&0
\end{pmatrix},\ \
\widetilde{\mathcal{C}}_2=
\begin{pmatrix}
0&-P_2(t)\\
-P_2(t)&0
\end{pmatrix},
\end{aligned}
\right.
\end{equation*}
\begin{equation*}
\left\{
\begin{aligned}
&\widetilde{\mathcal{C}}_3=
\begin{pmatrix}
0&0\\
0&0
\end{pmatrix},\ \
\mathcal{D}=
\begin{pmatrix}
-P_2(t)\\
0
\end{pmatrix},\ \
\mathcal{F}_1=
\begin{pmatrix}
0&-e^{\beta t}\\
-e^{\beta t}&0
\end{pmatrix},\ \
\bar{\xi}=
\begin{pmatrix}
0\\
\xi
\end{pmatrix},\ \
\bar{G}=
\begin{pmatrix}
0&0\\
2&2
\end{pmatrix}.
\end{aligned}
\right.
\end{equation*}
By Theorem 4.2, we can get
\begin{equation}\label{5v2}
\bar{v}_2(t)=-e^{\beta t}\widetilde{\mathcal{B}}_2^\top\Pi_3(t)Y(t)-e^{\beta t}\big[\widetilde{\mathcal{B}}_2^\top\Pi_4(t)+\mathcal{D}^\top\big]\hat{Y}(t)-e^{\beta t}\widetilde{\mathcal{B}}_2^\top\widetilde{\varphi}(t),
\end{equation}
where $(Y(\cdot),Z(\cdot),\widetilde{Z}(\cdot))$ satisfy the following 2-dimensional BSDE
{\small\begin{equation}
\left\{
\begin{aligned}
-dY(t)&=\bigg\{\bigg[\begin{pmatrix}
                      0&-e^{\beta t}\\
                      -e^{\beta t}&-e^{\beta t}
                     \end{pmatrix}\Pi_3(t)+\begin{pmatrix}
                                         -r(t)-e^{\beta t}P_2(t)-\Big(\frac{\mu_2(t)-r(t)}{\widetilde{\sigma}(t)}\Big)^2P_1(t)P_2(t)&0\\
                                         0&-r(t)
                                        \end{pmatrix}\bigg]Y(t)\\
      &\qquad+\bigg[\begin{pmatrix}
                     0&-e^{\beta t}\\
                     -e^{\beta t}&-e^{\beta t}
                    \end{pmatrix}\Pi_4(t)+\begin{pmatrix}
                                           0&0\\
                                           -e^{\beta t}P_2(t)&-e^{\beta t}P_2(t)
                                          \end{pmatrix}\bigg]\hat{Y}(t)+\begin{pmatrix}
                                                                         0&-e^{\beta t}\\
                                                                         -e^{\beta t}&-e^{\beta t}
                                                                        \end{pmatrix}\widetilde{\varphi}(t)\\
      &\qquad+\begin{pmatrix}
               -\frac{\mu_1(t)-r(t)}{\sigma(t)}&0\\
               0&-\frac{\mu_1(t)-r(t)}{\sigma(t)}
              \end{pmatrix}Z(t)+\begin{pmatrix}
                                 0&0\\
                                 0&-\frac{\mu_2(t)-r(t)}{\widetilde{\sigma}(t)}
                                \end{pmatrix}\widetilde{Z}(t)\bigg\}dt\\
      &\quad-Z(t)dW(t)-\widetilde{Z}(t)d\widetilde{W}(t),\ t\in[0,T],\ Y(T)=\bar{\xi},
\end{aligned}
\right.
\end{equation}}
$\widetilde{\varphi}(t)$ satisfies the 2-dimensional SDE \eqref{widevarphi}, and $\Pi_3(\cdot)$ and $\Pi_4(\cdot)$ satisfy \eqref{Pi3} and \eqref{Pi4}, respectively.

By a dual technique, we have
\begin{equation}
\begin{aligned}
Y(t)&=\mathbb{E}\Bigg[\Gamma_t(T)\bar{\xi}+\int_t^T\Bigg\{\bigg[\begin{pmatrix}
                                                                 0&-e^{\beta s}\\
                                                                 -e^{\beta s}&-e^{\beta s}
                                                                \end{pmatrix}\Pi_4(t)+\begin{pmatrix}
                                                                                    0&0\\
                                                                                    -e^{\beta s}P_2(s)&-e^{\beta s}P_2(s)
                                                                                   \end{pmatrix}\bigg]\hat{Y}(s)\\
&\qquad+\begin{pmatrix}
         0&-e^{\beta s}\\
         -e^{\beta s}&-e^{\beta s}
        \end{pmatrix}\widetilde{\varphi}(s)\Bigg\}\Gamma_t(s)ds\Bigg|\mathcal{F}_t\Bigg],
\end{aligned}
\end{equation}
where for $t\in[0,T]$, $\Gamma_t(\cdot)$ is the unique solution to
{\small\begin{equation}
\left\{
\begin{aligned}
d\Gamma_t(s)&=\Bigg[\begin{pmatrix}
                     0&-e^{\beta s}\\
                     -e^{\beta s}&-e^{\beta s}
                    \end{pmatrix}\Pi_3+\begin{pmatrix}
                                        -r(s)-e^{\beta s}P_2(s)-\Big(\frac{\mu_2(s)-r(s)}{\widetilde{\sigma}(s)}\Big)^2P_1(s)P_2(s)&0\\
                                        0&-r(s)
                                       \end{pmatrix}\Bigg]\Gamma_t(s)ds\\
             &\quad+\begin{pmatrix}
                     -\frac{\mu_1(s)-r(s)}{\sigma(s)}&0\\
                     0&-\frac{\mu_1(s)-r(s)}{\sigma(s)}
                    \end{pmatrix}\Gamma_t(s)dW(s)+\begin{pmatrix}
                                                   0&0\\
                                                   0&-\frac{\mu_2(s)-r(s)}{\widetilde{\sigma}(s)}
                                                  \end{pmatrix}\Gamma_t(s)d\widetilde{W}(s),\ s\in[t,T],\\
\Gamma_t(t)&=1.
\end{aligned}
\right.
\end{equation}}
Thus, $(\bar{v}_1(\cdot),\bar{v}_2(\cdot))$ determined by \eqref{5v1} and \eqref{5v2} is a Stackelberg equilibrium point of our Stackelberg game of BSDEs with partial information.

Finally, the optimal initial wealth reserve $y^{\bar{v}_1,\bar{v}_2}(0)$ is the second component of the following 2-dimensional vector
\begin{equation}
\begin{aligned}
Y(0)&=\mathbb{E}\Bigg[\Gamma_0(T)\bar{\xi}+\int_0^T\Bigg\{\bigg[\begin{pmatrix}
                                                                 0&-e^{\beta t}\\
                                                                 -e^{\beta t}&-e^{\beta t}
                                                                \end{pmatrix}\Pi_4(t)+\begin{pmatrix}
                                                                                       0&0\\
                                                                                       -e^{\beta t}P_2(t)&-e^{\beta t}P_2(t)
                                                                                      \end{pmatrix}\bigg]\hat{Y}(t)\\
    &\qquad+\begin{pmatrix}
             0&-e^{\beta t}\\
             -e^{\beta t}&-e^{\beta t}
            \end{pmatrix}\widetilde{\varphi}(t)\Bigg\}\Gamma_0(t)dt\Bigg].
\end{aligned}
\end{equation}

\section{Concluding Remarks}

In this paper, we have discussed the Stackelberg game of BSDEs with partial information. The general problem is studied first and then the LQ special case is researched in some state estimate representations for the Stackelberg equilibrium point, for the follower and the leader, respectively. Theoretical results are applied to the pension fund management problem.

Possible extensions to the Stackelberg game with noisy observations are desired to be researched, and the solvability of the related Riccati equations are very challenging and difficult research topics. We will consider these problems in our future research.

\end{document}